\date{}
\newtheorem{statement}{}[section]
\newtheorem{theorem}[statement]{Theorem}
\newtheorem{lemma}[statement]{Lemma}
\newtheorem{proposition}[statement]{Proposition}
\newtheorem{definition}[statement]{Definition}
\newtheorem{corollary}[statement]{Corollary}
\newcommand\C{\mathbb C}
\newcommand\R{\mathbb R}
\newcommand\T{\mathbb T}
\newcommand\D{\mathbb D}
\newcommand\Z{\mathbb Z}
\newcommand\e{{\rm e}}
\newcommand\B{\mathbb B}
\newcommand\eps{\varepsilon}
\newcommand\ind{\mathds{1}}
\newcommand\converge{\mathop{\longrightarrow}\limits}
\newcommand\capa{{\rm Cap}\,}
\let\phi=\varphi
\let\hat = \widehat
\let\tilde=\widetilde
\title{\bf Pluricapacity and approximation numbers of composition operators}
\author{\it Daniel~Li,  Herv\'e~Queff\'elec, L.~Rodr{\'\i}guez-Piazza}
\date{\footnotesize \today}
\begin{document}

\maketitle

\noindent {\bf Abstract.} For suitable bounded hyperconvex sets $\Omega$ in $\C^N$, in particular the ball or the polydisk, we give estimates for the 
approximation numbers of composition operators $C_\phi \colon H^2 (\Omega) \to H^2 (\Omega)$ when $\phi (\Omega)$ is relatively compact in $\Omega$, 
involving the Monge-Amp\`ere capacity of $\phi (\Omega)$. 
\medskip

\noindent \emph{Key-words}\/:  approximation numbers ; composition operator; Hardy space ; hyperconvex domain ; Monge-Amp{\`e}re capacity ;  
pluricapacity ; pluripotential theory ;  Zakharyuta conjecture
\medskip

\noindent \emph{MSC~2010 numbers} -- \emph{Primary}\/: 47B33 -- \emph{Secondary}\/: 30H10  -- 31B15 -- 32A35 -- 32U20 -- 41A25 -- 41A35 -- 46B28 
-- 46E20 -- 47B06


\section{Introduction} \label{sec: intro}

Let $\D$ be the unit disk in $\C$, $H^{2}(\D)$ the corresponding Hardy space,  $\varphi$  a non-constant analytic self-map  of $\D$ and 
$C_\varphi \colon H^{2}(\D) \to H^{2}(\D)$ the associated composition operator. In \cite{LQR}, we proved a formula connecting the approximation 
numbers $a_{n} (C_\varphi)$ of $C_\varphi$, and the Green capacity of the image $\varphi (\D) $ in $\D$, namely, when 
$\overline{[\phi^{\phantom{l}}\hskip - 3 pt (\D)]} \subset \D$, 
we have:
\begin{equation} \label{form}  
\beta(C_\varphi) := \lim_{n\to \infty} [a_{n}(C_\varphi) ]^{1/n} = \exp \big(- 1/\capa [\phi (\D)] \big) \, ,
\end{equation} 
where  $\capa [\phi (\D)]$ is the \emph{Green capacity} of $\phi (\D)$.\par
\smallskip

A non-trivial consequence of that formula was the following:
\begin{equation} \label{nt} 
\Vert \varphi \Vert_\infty = 1 \quad \Longrightarrow \quad  a_{n}(C_\varphi)\geq \delta \, \e^{- n \eps_n} \text{ where } \eps_n \to 0_{+} \, .
\end{equation} 
In other terms, as soon as $ \Vert \varphi\Vert_\infty = 1$, we cannot hope better for the numbers $a_{n} (C_\varphi)$ than a subexponential decay, 
however slowly $\varepsilon_n$ tends to $0$. 
\smallskip

In \cite{LIQR}, we pursued that line of investigation in dimension $N \geq 2$, namely on $H^{2}(\D^N)$, and showed that in some cases 
the implication \eqref{nt} still holds (\cite[Theorem~3.1]{LIQR}): 
\begin{equation} \label{Nt}
\Vert \varphi \Vert_\infty = 1 \quad \Longrightarrow \quad  a_{n}(C_\varphi)\geq \delta \, \e^{- n^{1/ N} \eps_n} \text{ where } \eps_n \to 0_{+} \, ,
\end{equation} 
(the substitution of $n$ by $n^{1/N}$ is mandatory as shown by the results of \cite{BLQR}). 
\smallskip

We show in this paper that, in general, for non-degenerate symbols, we have similar formulas  to \eqref{form} at our disposal for the parameters: 
\begin{equation} \label{para} 
\beta_{N}^{-}(C_\varphi) = \liminf_{n \to \infty} [ a_{n^N} (C_\varphi) ]^{1/n} \quad \text{and} \quad 
\beta_{N}^{+}(C_\varphi) = \limsup_{n \to \infty} [ a_{n^N} (C_\varphi) ]^{1/n} \, . 
\end{equation} 
These bounds are given in terms of the Monge-Amp{\`e}re (or Bedford-Taylor) capacity of $\varphi (\D^N)$ in $\D^N$, a notion which is the natural 
multidimensional extension of the Green capacity when the dimension $N$ is $\geq 2$ (\cite[Theorem~~6.4]{LIQR}). We show that we have 
$\beta_{N}^{-}(C_\varphi) =  \beta_{N}^{+}(C_\varphi)$ for well-behaved symbols.
\par\medskip

\section{Notations and background} 

\subsection{Complex analysis}

Let $\Omega$ be a domain in $\C^N$; a function $u \colon \Omega \to \R \cup \{- \infty \}$ is said \emph{plurisubharmonic} ({\it psh}) if it is u.s.c. and if 
for every complex line $L = \{ a + z w \, ; \ z \in \C\}$ ($a \in \Omega$, $w \in \C^N$), the function $z \mapsto u ( a + z w)$ is subharmonic in 
$\Omega \cap L$. 
We denote ${\cal PSH} (\Omega)$ the set of plurisubharmonic  functions in $\Omega$. If $f \colon \Omega \to \C$ is holomorphic, then $\log |f|$ and 
$|f|^\alpha$, $\alpha > 0$, are {\it psh}. Every real-valued convex function is {\it psh} (convex functions are those whose composition with all $\R$-linear 
isomorphisms are subharmonic, though plurisubharmonic functions are those whose composition with all $\C$-linear isomorphisms are subharmonic: see 
\cite[Theorem~2.9.12]{Klimek}).

Let $dd^c = 2 i \partial \bar\partial$, and $(dd^c)^N = dd^c \wedge \cdots \wedge dd^c$ ($N$ times). When 
$u \in {\cal PSH} (\Omega) \cap\, {\cal C}^2 (\Omega)$, we have:
\begin{displaymath} 
(dd^c u)^N = 4^N N! \det \bigg( \frac{\partial^2 u}{\partial z_j \partial \bar{z}_k} \bigg)\, d\lambda_{2N} (z) \, , 
\end{displaymath} 
where $d \lambda_{2N} (z) = (i/2)^N dz_1 \wedge d\bar{z}_1 \wedge \cdots \wedge dz_N \wedge d\bar{z}_N$ is the usual volume in $\C^N$. 
In general, the current $(dd^c u)^N$ can be defined for all locally bounded $u \in {\cal PSH} (\Omega)$ and is actually a positive measure on $\Omega$ 
(\cite{BT}). 
\smallskip

Given $p_1, \ldots, p_J \in \Omega$, the pluricomplex Green function with poles $p_1, \ldots, p_J$ and weights $c_1, \ldots, c_N > 0$ is defined as:
\begin{align*} 
g (z)  = g (z, p_1, \ldots, p_J)  =  \sup \{ v (z) & \, ;  \ v \in {\cal PSH} (\Omega) \, ,  v \leq 0  \text{ and } \\
& v (z) \leq c_j \log |z - p_j| + {\rm O}\, (1) \, ,  \forall j = 1, \ldots, J \} \, .
\end{align*}
In particular, for $J = 1$ and $p_1 = a$, $c_1 = 1$, $g (\, \cdot \, , a)$ is the \emph{pluricomplex Green function} of $\Omega$ with pole $a \in \Omega$. If 
$0 \in \Omega$ and $a = 0$, we denote it by $g_\Omega$ and call it the \emph{pluricomplex Green function of $\Omega$}; hence:
\begin{displaymath} 
g_a (z) =  g (z, a) = \sup \{ u (z) \, ; \ u \in {\cal PSH} (\Omega)\, , \ u \leq 0 \text{ and } u (z) \leq \log | z - a | + {\rm O}\, (1) \} \, .
\end{displaymath} 

Let $\Omega$ be an open subset of $\C^N$. A continuous function $\rho \colon \Omega \to \R$ is an exhaustion function if 
there exists $a \in (- \infty, + \infty]$ such that $\rho (z) < a$ for all $z \in \Omega$, and the set 
$\Omega_c = \{ z \in \Omega \, ; \ \rho (z) < c \}$ is relatively compact in $\Omega$ for every $c < a$.

A domain $\Omega$ in $\C^N$ is said \emph{hyperconvex} if there exists a continuous {\it psh} exhaustion function $\rho \colon \Omega \to  (-\infty, 0)$ 
(see \cite[p.~80]{Klimek}). We may of course replace the upper bound $0$ by any other real number. Without this upper bound, $\Omega$ is said 
\emph{pseudoconvex}.
\par\smallskip

Let $\Omega$ be a hyperconvex domain, with negative continuous {\it psh} exhaustion function $\rho$ and $\mu_{\rho, r}$ the associated 
Demailly-Monge-Amp\`ere measures,  defined as:
\begin{equation} \label{mesures DMA}
\mu_{u, r} = (dd^c u_r)^N - \ind_{\Omega \setminus B_{\Omega, u} (r)} (dd^c u)^N \, , 
\end{equation} 
for $r < 0$, where $u_r = \max (u, r)$ and:
\begin{displaymath} 
B_{\Omega, u} (r) = \{ z \in \Omega \, ; \ u (z) < r \} \, . 
\end{displaymath} 
The nonnegative measure $\mu_{u, r}$ is supported by $S_{\Omega, u} (r) := \{ z \in \Omega \, ; \ u (z) = r \}$. 

If:
\begin{displaymath} 
\int_\Omega (dd^c \rho)^N < \infty \, ,
\end{displaymath} 
these measures, considered as measures on $\overline\Omega$, weak-$\ast$ converge, as $r$ goes to $0$, to a positive measure 
$\mu = \mu_{\Omega, \rho}$ supported by $\partial \Omega$ and with total mass $\int_\Omega (dd^c \rho)^N$ (\cite[Th\'eor\`eme~3.1]{Demailly-1987}, 
or \cite[Lemma~6.5.10]{Klimek}). 
\par\smallskip

For the pluricomplex Green function $g_a$ with pole $a$, we have $(dd^c g_a)^N = (2 \pi)^N \delta_a$ (\cite[Th\'eor\`eme~4.3]{Demailly-1987}) and 
$g_a (a) = - \infty$, so $a \in B_{\Omega, g_a} (r)$ for every \hbox{$r < 0$} and $\ind_{\Omega \setminus B_{\Omega, g_a} (r)} (dd^c g_a)^N = 0$. Hence 
the Demailly-Monge-Amp\`ere mesure $\mu_{g_a, r}$ is equal to $\big(dd^c (g_a)_r \big)^N$. By \cite[Lemma~1]{Tiba}, we have  
$(1 / |r| ) \,\big(dd^c (g_a)_r \big)^N =  u_{\bar{B}_{\Omega, g_a} (r), \Omega}$,  the relative extremal function of 
$\bar{B}_{\Omega, g_a} (r) =\{z \in \Omega \, ; \ g_a (z) \leq r\}$ in $\Omega$ (see \eqref{fct extremale} for the definition), and this measure is supported, 
not only by $S_{\Omega, g_a} (r)$, but merely by the Shilov boundary of $\bar{B}_{\Omega, g_a} (r)$ (see Section~\ref{sec: Hardy on bsd} for the definition). 

Since $(dd^c g_a)^N  = (2 \pi)^N \delta_a$ has mass $(2 \pi)^N < \infty$, these measures weak-$\ast$ converge, as $r$ goes to $0$, to a positive measure 
$\mu = \mu_{\Omega, g_a}$ supported by $\partial \Omega$ with mass $(2 \pi)^N$. Demailly (\cite[D\'efinition~5.2]{Demailly-1987} call the measure 
$\frac{1}{(2 \pi)^N} \, \mu_{\Omega, g_a}$ the \emph{pluriharmonic measure of $a$}. When $\Omega$ is balanced ($a z \in \Omega$ for every 
$z \in \Omega$ and $|a| = 1$), the support of this pluriharmonic measure is the Shilov boundary of $\overline\Omega$ (\cite[very end of the paper]{Tiba}).
\par\medskip

A \emph{bounded symmetric domain} of $\C^N$ is a bounded open and convex subset $\Omega$ of $\C^N$ which is circled ($a z \in \Omega$ for 
$z \in \Omega$ and $|a| \leq 1$) and such that for every point $a \in \Omega$, there is an involutive bi-holomorphic map 
$\gamma \colon \Omega \to \Omega$ such that $a$ is an isolated fixed point of $\gamma$ (equivalently, $\gamma (a) = a$ and $\gamma ' (a) = - id$: see 
\cite[Proposition 3.1.1]{VIGUE}). For this definition, see \cite[Definition~16 and Theorem~17]{CLERC}, or \cite[Definition~5 and Theorem~4]{CLERC2}. 
Note that the convexity is automatic (Hermann Convexity Theorem; see \cite[p.~503 and Corollary~4.10]{Kaup}). \'E.~Cartan showed that every bounded 
symmetric domain of $\C^N$ is homogeneous, i.e. the group $\Gamma$ of automorphisms of $\Omega$ acts transitively on $\Omega$: for every 
$a, b \in \Omega$, there is an automorphism $\gamma$ of $\Omega$ such that $\gamma (a) = b$ (see \cite[p.~250]{VIGUE}). Conversely, every homogeneous 
bounded convex domain is symmetric, since $\sigma (z) = - z$ is a symmetry about $0$  (see \cite[p.~250]{VIGUE} or \cite[Remark~2.1.2~(e)]{Jarnicki}). 
Moreover, each automorphism extends continuously to $\overline \Omega$ (see \cite{HAHN-MITCHELL-TAMS}). 

The unit ball $\B_N$ and the polydisk $\D^N$ are examples of bounded symmetric domains. Another example is, for $N = p \, q$, bi-holomorphic to the open 
unit ball of ${\cal M} (p, q) = {\cal L} (\C^q, \C^p)$ for the operator norm (see \cite[Theorem~4.9]{Kaup}). Every product of bounded symmetric domains is 
still a bounded symmetric domain. In particular, every product of balls $\Omega = \B_{l_1} \times \cdots \times \B_{l_m}$, $l_1 + \cdots + l_m = N$, is a 
bounded symmetric domain. 
\par

If $\Omega$ is a bounded symmetric domain, its gauge is a norm $\| \, . \, \|$ on $\C^N$ whose open unit ball is $\Omega$. Hence every bounded symmetric 
domain is hyperconvex (take $\rho (z) = \| z\| - 1$).

\subsection{Hardy spaces on hyperconvex domains}

\subsubsection{Hardy spaces on bounded symmetric domains} \label{sec: Hardy on bsd}

We begin by defining the Hardy space on a bounded symmetric domain, because this is easier.

The \emph{Shilov boundary} (also called the Bergman-Shilov boundary or the distinguished boundary) $\partial_S \Omega$ of a bounded domain 
$\Omega$ is the smallest closed set $F \subseteq \partial \Omega$ such that $\sup_{z \in \overline{\Omega}} | f (z) | = \sup_{z \in F} | f (z) |$ for every 
function $f$ holomorphic in some neighborhood of $\overline \Omega$ (see \cite[\S~4.1]{CLERC}). 

When $\Omega$ is a bounded symmetric domain, it is also, since $\Omega$ is convex, the Shilov boundary of the algebra $A (\Omega)$ of the continuous 
functions on $\overline \Omega$ which are holomorphic in $\Omega$ (because every function $f \in A (\Omega)$ can be approximated by 
$f_\eps$ with $f_\eps (z) = f \big( \eps z_0 + (1 - \eps) z \big)$, where $z_0 \in \Omega$ is given: see \cite[pp.~152--154]{GUI}). 

The Shilov boundary of the ball $\B_N$ is equal to its topological boundary, but the Shilov boundary of the bidisk is 
$\partial_S {\D^2} = \{ (z_1, z_2) \in \C^2 \, ; \ |z_1| = |z_2| = 1\}$, whereas, its usual boundary $\partial \D^2$ is 
$(\T \times \overline \D) \cup (\overline \D \times \T)$; for the unit ball $B_N$, the Shilov 
boundary is equal to the usual boundary ${\mathbb S}^{N - 1}$ (\cite[\S~4.1]{CLERC}). Another example of a bounded symmetric domain, in $\C^3$, is 
the set $\Omega = \{ (z_1, z_2, z_3) \in \C^3 \, ; \ |z_1|^2 + |z_2|^2 < 1 \, , \ |z_3| < 1\}$ and its Shilov boundary is 
$\partial_S \Omega = \{ (z_1, z_2, z_3) \, ; \ |z_1|^2 + |z_2|^2 = 1 \, , \ |z_3| = 1\}$. For $p \geq q$, the matrix $A$ is in the topological boundary of 
${\cal M} (p, q)$ if and only if $\| A \| = 1$, but $A$ is in the Shilov boundary if and only if $A^\ast A = I_q$; therefore the two boundaries 
coincide if and only if $q = 1$, i.e. $\Omega = \B_N$ (see \cite[Example~2, p.~30]{CLERC2}).

Equivalently (see \cite[Corollary~9]{Harris}, or \cite[Theorem~33]{CLERC}, \cite[Theorem~10]{CLERC2}), $\partial_S \Omega$ is the set of the extreme 
points of the convex set $\overline \Omega$. \par

The Shilov boundary $\partial_S \Omega$ is invariant by the group $\Gamma$ of automorphisms of $\Omega$ and the subgroup 
$\Gamma_0 = \{ \gamma \in \Gamma \, ; \ \gamma (0) = 0 \}$ act transitively on $\partial_S \Omega$ (see \cite{HAHN-MITCHELL-TAMS}). A theorem 
of H.~Cartan states that the elements of $\Gamma_0$ are linear transformations of $\C^N$ and commute with the rotations (see \cite[Theorem~1]{Harris} or 
\cite[Proposition~2.1.8]{Jarnicki}). 
It follows that the Shilov boundary of a bounded symmetric domain $\Omega$ coincides with its topological boundary only for $\Omega = \B_N$ 
(see \cite[p.~572]{Li-Queff} or \cite[p.~367]{Li-Queff-Cambridge-1}); in particular the open unit ball of $\C^N$ for the norm $\| \, . \, \|_p$, 
$ 1 < p < \infty$, is never a bounded symmetric domain, unless $p = 2$.

The unique $\Gamma_0$-invariant probability measure $\sigma$ on $\partial_S \Omega$ is the normalized surface area (see \cite{HAHN-MITCHELL-TAMS}).
Then the \emph{Hardy space} $H^2 (\Omega)$ is the space of all complex-valued holomorphic functions $f$ on $\Omega$ such that:
\begin{displaymath} 
\| f \|_{H^2 (\Omega)} := \bigg( \sup_{0 < r < 1} \int_{\partial_S \Omega} | f (r \xi )|^2 \, d \sigma (\xi) \bigg)^{1/2} 
\end{displaymath} 
is finite (see \cite{HAHN-MITCHELL-TAMS} and \cite{HAHN-MITCHELL}). It is known that the integrals in this formula are non-decreasing as $r$ increases 
to $1$, so we can replace the supremum by a limit. The same definition can be given when $\Omega$ is a bounded complete Reinhardt domain (see 
\cite{Aizenberg}).

The space $H^2 (\Omega)$ is a Hilbert space (see \cite[Theorem~5]{HAHN-MITCHELL-TAMS}) and for every $z \in \Omega$, the evaluation map 
$f \in H^2 (\Omega) \mapsto f (z)$ is uniformly bounded on compacts subsets of $\Omega$, by a depending only on that compact set, and of $\Omega$ 
(\cite[Lemma~3]{HAHN-MITCHELL-TAMS}). 

For every $f \in H^2 (\Omega)$, there exists a boundary values function $f^\ast$ such that 
$\| f_r - f^\ast \|_{L^2 (\partial_S \Omega)} \converge_{r \to 1} 0$, where $f_r (z) = f (r z)$ (\cite[Theorem~3]{Bochner}), and the map 
$f \in H^2 (\Omega) \mapsto f^\ast \in L^2 (\partial_S \Omega)$ is an isometric embedding (\cite[Theorem~6]{HAHN-MITCHELL-TAMS}).

\subsubsection{Hardy spaces on hyperconvex domains} 

For hyperconvex domains, the definition of Hardy spaces is more involved. It was done by E.~Poletsky and M.~Stessin 
(\cite[Theorem~6]{Pol-Stess}). Those domains are associated to a continuous negative {\it psh} exhaustion function $u$ on $\Omega$ and the definition of the 
Hardy spaces uses the Demailly-Monge-Amp\`ere measures. The space $H^2_u (\Omega)$ is the space of all holomorphic functions $f \colon \Omega \to \C$ 
such that:
\begin{displaymath} 
\sup_{r < 0} \int_{S_{\Omega, u} (\Omega)} |f|^2 \, d \mu_{u, r}  < \infty
\end{displaymath} 
and its norm is defined by:
\begin{displaymath} 
\| f \|_{H^2_u (\Omega)} =  \sup_{r < 0} \bigg( \frac{1}{(2 \pi)^N} \int_{S_{\Omega, u} (\Omega)} |f|^2 \, d \mu_{u, r} \bigg)^{1/2} \, .
\end{displaymath} 
We can replace the supremum by a limit since the integrals are non-decreasing as $r$ increases to $0$ (\cite[Corollaire~1.9]{Demailly-1987}.

The space $H^\infty (\Omega)$ of bounded holomorphic functions in $\Omega$ is contained in $H^2_u (\Omega)$ (see \cite{Pol-Stess}, remark before 
Lemma~3.4).

These spaces $H^2_u (\Omega)$ are Hilbert spaces (\cite[Theorem~4.1]{Pol-Stess}), but depends on the exhaustion function $u$ (even when $N = 1$: see for 
instance \cite{Sahin}). Nevertheless, they all coincide, with equivalent norms, for the functions $u$ for which the measure $(dd^c u)^N$ is compactly supported 
(\cite[Lemma~3.4]{Pol-Stess}); this is the case when $u (z) = g (z, a)$ is the pluricomplex Green function with pole $a \in \Omega$ (because then 
$(dd^c u)^N = (2 \pi)^N \delta_a$: see \cite[Th\'eor\`eme~4.3]{Demailly-1987}, or \cite[Theorem~6.3.6]{Klimek}). 

When $\Omega$ is the ball $\B_N$ and $u (z) = \log\| z \|_2$, then $(dd^c u)^N = C\, \delta_0$ and $\mu_{u, r} = (2 \pi)^N d \sigma_t$, where 
$d \sigma_t$ is the normalized surface area on the sphere of radius $t := \e^r$ (see \cite[Section 4]{Pol-Stess} or \cite[Example~3.3]{Demailly}).  
When $\Omega$ is the polydisk $\D^N$ and $u (z) = \log \| z \|_\infty$, then $(dd^c u)^N = (2 \pi)^N \delta_0$ (\cite[Corollary~5.4]{Demailly-1991}) 
and $\frac{1}{(2 \pi)^N} \,\mu_{u, r}$ is the Lebesgue measure of the torus $r\T^N$ (see \cite[Example~3.10]{Demailly}). Note that in \cite{Demailly} and 
\cite{Demailly-1991}, the operator $d^c$ is defined as $\frac{i}{2 \pi} (\bar\partial - \partial)$ instead of $i (\bar\partial - \partial)$, as usually used.

In these two cases, the Hardy spaces are the same as the usual ones (see \cite[Remark~5.2.1]{Alan}). 
\smallskip

In the sequel, we only consider the exhausting function $u = g_\Omega$; hence we will write $B_\Omega (r)$, $S_\Omega (r)$ and $H^2 (\Omega)$ instead of 
$B_{\Omega, u} (r)$, $S_{\Omega, u} (r)$ and $H^2_u (\Omega)$.
\smallskip

The two notions of Hardy spaces for a bounded symmetric domain are the same:
\begin{proposition} \label{Hardy spaces}
Let $\Omega$ be a bounded symmetric domain in $\C^N$. Then the Hardy space $H^2 (\Omega)$ coincides with the subspace of the Poletski-Stessin Hardy 
space $H_{g_\Omega} (\Omega)$, with equality of the norms.
\end{proposition}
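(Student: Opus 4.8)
The plan is to show that the two Hilbert spaces have the same underlying set of holomorphic functions and that the two norms coincide; by the open mapping theorem it would then suffice to produce the norm identity on a dense class of functions. The natural dense class is the polynomials (or, more safely, $H^\infty(\Omega) \cap A(\Omega)$, i.e.\ functions holomorphic in a neighbourhood of $\overline\Omega$), since these are dense in both spaces: in $H^2(\Omega)$ because the dilates $f_\varepsilon(z) = f(\varepsilon z_0 + (1-\varepsilon)z)$ approximate $f$ (exactly the argument quoted in Section~\ref{sec: Hardy on bsd} for the Shilov boundary of $A(\Omega)$), and in the Poletsky--Stessin space $H_{g_\Omega}(\Omega)$ because $H^\infty(\Omega) \subseteq H^2_{g_\Omega}(\Omega)$ and the same dilation argument applies there as well, the measures $\mu_{g_\Omega,r}$ being supported in $\overline\Omega$.

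First I would recall that, since $u = g_\Omega = g(\cdot,0)$ has $(dd^c u)^N = (2\pi)^N \delta_0$ compactly supported, Lemma~3.4 of \cite{Pol-Stess} guarantees the Poletsky--Stessin space is well defined independently of such exhaustion, and that by \cite[Lemma~1]{Tiba} one has $\frac{1}{(2\pi)^N}\mu_{g_\Omega,r} = \frac{|r|}{(2\pi)^N}\,(dd^c(g_\Omega)_r)^N \big/ |r| = $ the normalized Monge--Amp\`ere measure of the sublevel set, which as $r \to 0^-$ weak-$\ast$ converges to a probability measure $\nu$ on $\partial\Omega$ supported on the Shilov boundary $\partial_S\Omega$ (the Shilov-boundary support being exactly the statement cited from the end of \cite{Tiba}, valid because $\Omega$ is balanced). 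Then for $f \in A(\Omega)$ one has
\begin{displaymath}
\|f\|_{H_{g_\Omega}(\Omega)}^2 = \lim_{r \to 0^-} \frac{1}{(2\pi)^N}\int |f|^2\, d\mu_{g_\Omega,r} = \int_{\partial_S\Omega} |f^\ast|^2\, d\nu,
\end{displaymath}
using continuity of $|f|^2$ on $\overline\Omega$ and weak-$\ast$ convergence. The crux is then to identify $\nu$ with the normalized surface area $\sigma$ appearing in the definition of $H^2(\Omega)$.

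The identification $\nu = \sigma$ is the step I expect to be the main obstacle, but it should follow from uniqueness: $\sigma$ is characterized as the unique $\Gamma_0$-invariant probability measure on $\partial_S\Omega$ (with $\Gamma_0$ the stabilizer of $0$, whose elements are unitary by H.~Cartan's theorem), so it is enough to check that $\nu$ is invariant under every $\gamma \in \Gamma_0$. This is clear because $g_\Omega = g(\cdot,0)$ is $\Gamma_0$-invariant (it is the pluricomplex Green function with pole at the fixed point $0$), hence the sublevel sets $\bar B_\Omega(r)$ and their Monge--Amp\`ere measures are $\Gamma_0$-invariant, so each $\mu_{g_\Omega,r}$ is $\Gamma_0$-invariant, and invariance passes to the weak-$\ast$ limit $\nu$ (each $\gamma$ extends continuously to $\overline\Omega$). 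Therefore $\nu = \sigma$, the two norms agree on the dense class $A(\Omega)$, and a standard completion/extension argument upgrades this to equality of the spaces $H^2(\Omega)$ and $H_{g_\Omega}(\Omega)$ with identical norms; the boundary-value maps $f \mapsto f^\ast$ into $L^2(\partial_S\Omega,\sigma)$ being isometric on both sides, the identification is canonical.
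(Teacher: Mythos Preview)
Your identification of the limiting measure $\nu$ with $\sigma$ via $\Gamma_0$-invariance and uniqueness is exactly the heart of the paper's argument (the paper invokes full $\Gamma$-invariance, citing Demailly, but your $\Gamma_0$-invariance is the cleaner and sufficient observation, since $\sigma$ is already characterized as the unique $\Gamma_0$-invariant probability on $\partial_S\Omega$).

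Where your route diverges is in passing from this identification to the full norm equality. You propose to check $\|f\|_{H^2(\Omega)} = \|f\|_{H^2_{g_\Omega}(\Omega)}$ on $A(\Omega)$ and then extend by density. The gap is the density of $A(\Omega)$ in $H^2_{g_\Omega}(\Omega)$: your justification ``the same dilation argument applies there as well'' would require knowing that $\|f_s - f\|_{H^2_{g_\Omega}} \to 0$ as $s\to 1$ for general $f\in H^2_{g_\Omega}$, and this is essentially the scaling identity you are trying to establish. Nothing in the general Poletsky--Stessin framework gives this for free.

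The paper sidesteps the density issue entirely by exploiting the explicit formula $g_\Omega(z)=\log\|z\|$. Then $S_\Omega(r)=\e^r\partial\Omega$ and one has the scaling relation $\mu_r(sA)=\mu_{r+t}(A)$ with $t=\log s$; combined with $\nu=\sigma$ this yields, for \emph{any} holomorphic $f$ and any $0<s<1$,
\[
\int_{\partial_S\Omega}|f(s\xi)|^2\,d\sigma(\xi)=\frac{1}{(2\pi)^N}\int_{S_\Omega(r+t)}|f|^2\,d\mu_{r+t},
\]
so taking suprema gives the norm equality directly, with no appeal to density. An equivalent fix of your argument: apply your $\Gamma_0$-invariance reasoning not only to the limit $\nu$ but to each intermediate measure $\tfrac{1}{(2\pi)^N}\mu_{g_\Omega,r}$, which is $\Gamma_0$-invariant and supported on the single $\Gamma_0$-orbit $\e^r\partial_S\Omega$; uniqueness then forces it to be the pushforward of $\sigma$ under $\xi\mapsto \e^r\xi$, and the norm identity follows for all $f$ without any density step.
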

\begin{proof} First let us note that if $\| \, . \, \|$ is the norm whose open unit ball is $\Omega$, then $g_\Omega (z) = \log \| z \|$ (see 
\cite[Proposition~3.3.2]{Blocki-Jag}).\par

Let $\mu_\Omega$ be the measure which is the $\ast$-weak limit of the Demailly-Monge-Amp\`ere measures $\mu_r = \big(dd^c (g_\Omega)_r \big)^N$. We 
saw that it is supported by $\partial_S \Omega$. By the remark made in \cite[pp.~536-537]{Demailly-1987}, since the automorphisms of $\Omega$ 
continuously extend on $\partial \Omega$, the measure $\mu_\Omega$ is $\Gamma$-invariant. By unicity, the harmonic measure 
$\tilde \mu_\Omega = (2 \pi)^{- N} \mu_\Omega$ at $0$ hence coincides with the normalized area measure on $\partial_S \Omega$. We have, for 
$f \colon \Omega \to \C$ holomorphic and $0 < s < 1$:
\begin{displaymath}
\int_{\partial_S \Omega} | f (s z) |^2 \, d\tilde\mu_\Omega (z) = \int_{\partial \Omega} | f (s z) |^2 \, d\tilde\mu_\Omega (z) 
= \lim_{r \to 0} \frac{1}{(2 \pi)^N} \int_{S_{\Omega} (r)} | f (s z) |^2 \, d \mu_r (z) \, ,
\end{displaymath}
because $z \mapsto |f (sz)|^2$ is continuous on $\overline \Omega$. Now, since $g_\Omega (z) = \log \| z\|$, we have 
$S_\Omega (r) = \e^r \partial \Omega$ and $(g_\Omega)_r (z) + t = (g_\Omega)_{r + t} (s z)$; hence $\mu_r (s A) = \mu_{r + t} (A)$ for every Borel subset 
$A$ of $\partial \Omega$, where $t = \log s$. It follows that:
\begin{displaymath} 
\int_{S_{\Omega} (r)} | f (s z) |^2 \, d \mu_r (z) = \int_{S_\Omega (r + t)} |f (\zeta)|^2 \, d\mu_{r + t} (\zeta) \, . 
\end{displaymath} 
By letting $r$ and $t$ going to $0$, we get:
\begin{displaymath} 
\| f \|_{H^2 (\Omega)}^2 = \lim_{r, t \to 0} \frac{1}{(2 \pi)^N} \int_{S_\Omega (r + t)} |f (\zeta)|^2 \, d\mu_{r + t} (\zeta) = 
\| f \|_{H^2_{g_\Omega}}^2 \, ;
\end{displaymath} 
hence $f \in H^2 (\Omega)$ if and only if $f \in H^2_{g_0} (\Omega)$, with the same norms.
\end{proof}

We have (\cite[Theorem~3.6]{Pol-Stess}): 
\begin{proposition} [Poletsky-Stessin] \label{eval}
For every $z \in \Omega$, the evaluation map $f \in H^2 (\Omega) \mapsto f (z)$ is uniformly bounded on compacts subsets of $\Omega$, by a constant 
depending only on that compact set, and of $\Omega$.
\end{proposition}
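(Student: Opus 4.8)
The plan is to bound each point evaluation by means of Demailly's Lelong--Jensen formula, choosing for each target point the exhaustion function whose Monge--Amp\`ere measure is a Dirac mass there, and then to pass to a bound that is uniform on compact sets by a closed-graph argument.

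First I would fix $z_0 \in \Omega$ and take as exhaustion the pluricomplex Green function $u = g(\,\cdot\,, z_0)$. Since $\Omega$ is hyperconvex, $u$ is a continuous negative {\it psh} exhaustion, and $(dd^c u)^N = (2\pi)^N \delta_{z_0}$ is a compactly supported positive measure of finite mass $(2\pi)^N$ (\cite[Th\'eor\`eme~4.3]{Demailly-1987}); in particular the Poletsky--Stessin space $H^2_u(\Omega)$ is well defined, its Demailly--Monge--Amp\`ere measure is $\mu_{u,r} = \big(dd^c u_r\big)^N$ (because $z_0 \in B_{\Omega,u}(r)$ for every $r < 0$), and by \cite[Lemma~3.4]{Pol-Stess} it coincides with $H^2_{g_\Omega}(\Omega) = H^2(\Omega)$ with equivalent norms, say $\|f\|_{H^2_u(\Omega)} \le C_{z_0}\,\|f\|_{H^2(\Omega)}$ for all $f$.

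Next I would prove the pointwise estimate $|f(z_0)| \le \|f\|_{H^2_u(\Omega)}$. The function $\varphi = |f|^2 = \exp(2\log|f|)$ is {\it psh} on $\Omega$, being the composition of the convex increasing function $t \mapsto \e^{2t}$ with the {\it psh} function $\log|f|$, and it is continuous, hence bounded on the relatively compact sublevel set $\overline{B_{\Omega,u}(r)}$ for each $r < 0$. Applying Demailly's Lelong--Jensen formula (\cite[Th\'eor\`eme~3.1]{Demailly-1987}) to $\varphi$ at level $r$ and using $(dd^c u)^N = (2\pi)^N \delta_{z_0}$, one gets
\begin{displaymath}
\frac{1}{(2\pi)^N} \int_{S_{\Omega,u}(r)} |f|^2 \, d\mu_{u,r} = |f(z_0)|^2 + \frac{1}{(2\pi)^N} \int_{B_{\Omega,u}(r)} (r - u)\, dd^c \varphi \wedge \big(dd^c u_r\big)^{N-1} \, .
\end{displaymath}
The remainder is a nonnegative number, since $r - u \ge 0$ on $B_{\Omega,u}(r)$ while $dd^c \varphi$ (as $\varphi$ is {\it psh}) and $dd^c u_r$ are positive currents; hence the left-hand side is $\ge |f(z_0)|^2$ for every $r < 0$, so that $|f(z_0)|^2 \le \|f\|_{H^2_u(\Omega)}^2 \le C_{z_0}^2\,\|f\|_{H^2(\Omega)}^2$ (nothing has to be proved if $\|f\|_{H^2(\Omega)} = \infty$). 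Thus every evaluation $f \mapsto f(z_0)$ is a bounded functional on $H^2(\Omega)$.

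For the uniform bound on a compact $K \subset \Omega$ I would argue softly: the inclusion $\iota \colon H^2(\Omega) \to {\cal O}(\Omega)$, with ${\cal O}(\Omega)$ carrying the topology of uniform convergence on compact subsets, has closed graph --- if $f_n \to f$ in $H^2(\Omega)$ and $f_n \to h$ in ${\cal O}(\Omega)$, then $f_n(z) \to f(z)$ for every $z$ (by the previous step) and also $f_n(z) \to h(z)$, so $h = f$. Since $H^2(\Omega)$ is a Hilbert space (\cite[Theorem~4.1]{Pol-Stess}) and ${\cal O}(\Omega)$ is a Fr\'echet space, the closed graph theorem gives the continuity of $\iota$, i.e. for every compact $K$ a constant $C(K)$, depending only on $K$ and on $\Omega$, with $\sup_{z \in K} |f(z)| \le C(K)\,\|f\|_{H^2(\Omega)}$ for all $f$. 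I expect the one genuinely delicate point to be the pluripotential input in the third paragraph --- pinning down the Lelong--Jensen identity in dimension $N$ together with the nonnegativity of its defect term, and checking that $\varphi = |f|^2$ is an admissible ({\it psh}, locally bounded on the compact sublevel sets) test function; once point evaluations are known to be bounded, the passage to locally uniform bounds costs nothing and uses no further pluripotential theory.
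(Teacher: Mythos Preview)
The paper does not give its own proof of this proposition; it simply attributes the result to Poletsky and Stessin and cites \cite[Theorem~3.6]{Pol-Stess}. There is therefore nothing in the paper to compare your argument against.

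Your argument is correct and is essentially the natural one. The Lelong--Jensen inequality applied to the {\it psh} function $|f|^2$ with exhaustion $u = g(\,\cdot\,, z_0)$ (whose Monge--Amp\`ere measure is $(2\pi)^N \delta_{z_0}$) yields $|f(z_0)|^2 \le \|f\|^2_{H^2_u(\Omega)}$; the norm equivalence from \cite[Lemma~3.4]{Pol-Stess} transfers this to $\|f\|_{H^2(\Omega)}$ with a $z_0$-dependent constant; and the closed-graph argument then upgrades pointwise continuity of the inclusion $H^2(\Omega) \hookrightarrow {\cal O}(\Omega)$ to continuity, hence to a uniform bound on each compact set. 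One cosmetic remark: the remainder in Demailly's formula actually involves a symmetric sum of mixed wedge products of $dd^c u$ and $dd^c u_r$, not $(dd^c u_r)^{N-1}$ alone, but every term in that sum is a positive current wedged against $dd^c\varphi \ge 0$ and multiplied by $r - u \ge 0$, so the nonnegativity you use is unaffected. An equally clean variant avoids Lelong--Jensen entirely: since $\mu_{u,r}$ has total mass $(2\pi)^N$ and is supported on $S_{\Omega,u}(r)$, which shrinks to $\{z_0\}$ as $r \to -\infty$, continuity of $|f|^2$ gives $\frac{1}{(2\pi)^N}\int |f|^2\, d\mu_{u,r} \to |f(z_0)|^2$, and the monotonicity in $r$ (\cite[Corollaire~1.9]{Demailly-1987}) already cited in the paper then yields the same inequality.
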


Hence $H^2 (\Omega)$ has a \emph{reproducing kernel}, defined by:
\begin{equation} 
f (a) = \langle f, K_a \rangle \, , \quad \text{for } f \in H^2 (\Omega) \, ,
\end{equation}
and for each $r < 0$:
\begin{equation}\label{sup} 
L_r := \sup_{a \in \overline{B_\Omega (r)}} \Vert K_{a}\Vert_2 < \infty \, .
\end{equation}
%

\subsection{Composition operators}

A Schur map, associated with the bounded hyperconvex domain $\Omega$, is a \emph{non-constant} analytic map of $\Omega$ into itself. It is said 
\emph{non degenerate} if its Jacobian is not identically null. It is equivalent to say that the differential $\phi ' (a) \colon \C^N \to \C^N$ is an invertible linear 
map for at least one point $a \in \Omega$. In \cite{BLQR}, we used the terminology \emph{truly $N$-dimensional}. Then, by the implicit function theorem, this 
is equivalent to saying that $\phi (\Omega)$ has non-void interior. We say that the Schur map $\phi$ is a \emph{symbol} if it defines a \emph{bounded} 
composition operator $C_\phi \colon H^2 (\Omega) \to H^2 (\Omega)$ by $C_\phi (f) = f \circ \phi$. \par

Let us recall that although any Schur function generates a bounded composition operator on $H^2 (\D)$, this is no longer the case on $H^2 (\D^N)$ as soon as 
$N \geq 2$, as shown for example by the Schur map $\varphi (z_1, z_2) = (z_1, z_1)$.  Indeed (see \cite{BAYPOLY}), if say $N = 2$, taking 
$f (z) = \sum_{j = 0}^n z_1^j z_2^{n - j}$, we see that:
\begin{displaymath} 
\Vert f \Vert_{2} = \sqrt{n + 1} \quad \text{while} \quad \Vert C_{\phi}f \Vert_{2} = \Vert (n + 1) z_1^n \Vert_2 = n + 1 \, .
\end{displaymath} 
The same phenomenon occurs on $H^2 (\B_N)$ (\cite{MCCL}; see also \cite{CSW}, \cite{CW}, and \cite{CO-MC}; see also \cite{Pol-Stess}).


\subsection{$s$-numbers of operators on a Hilbert space}
 
We begin by recalling a few operator-theoretic facts. Let $H$ be a Hilbert space. The approximation numbers $a_{n} (T) = a_n$ of an operator 
$T \colon H\to H$ are defined as:
\begin{equation} 
\qquad \quad a_n = \inf_{{\rm rank}\, R < n} \Vert T - R\Vert \, , \qquad  n = 1, 2, \ldots 
\end{equation} 
The operator $T$ is compact if and only if $\lim_{n\to \infty} a_{n}(T) = 0$. 

According to a result of Allahverdiev \cite[p.~155]{CAST}, $a_n = s_n$, the $n$-th singular number of $T$, i.e. the $n$-th eigenvalue of 
$|T| := \sqrt{T^{\ast}T}$ when those eigenvalues are rearranged in non-increasing order. 

The $n$-th width $d_{n}(K)$ of a subset $K$ of a Banach space $Y$ measures the defect of flatness of $K$ and is by definition:
\begin{equation} 
d_{n}(K) = \inf_{\dim E < n} \bigg[ \sup_{f \in K} {\rm dist}\, (f, E) \bigg] \, ,
\end{equation} 
where $E$ runs over all subspaces of $Y$ with dimension $< n$ and where ${\rm dist}\, (f, E)$ denotes the distance of $f$ to $E$. If $T \colon X \to Y$ is an 
operator between Banach spaces, the $n$-th Kolmogorov number $d_{n} (T)$ of $T$  is the $n$th-width in $Y$ of $T (B_X)$ where $B_X$ is the closed unit ball 
of $X$, namely:
\begin{equation} 
d_{n} (T) = \inf_{\dim E < n} \bigg[ \sup_{f \in B_X} {\rm dist}\, (Tf, E) \bigg] \, .
\end{equation} 
In the case where $X = Y = H$, a Hilbert space, we have:
\begin{equation} \label{a=d}
 a_{n} (T) = d_{n} (T) \quad \text{for all } n \geq 1 \, ,
\end{equation}
and (\cite{LQR}) the following alternative definition of $a_{n}(T)$: 
\begin{equation} \label{alter} 
a_{n} (T) = \inf_{\dim E < n} \bigg[ \sup_{f \in B_H} {\rm dist}\, (Tf, TE) \bigg] \,.
\end{equation}

In this work, we use, for an operator $T \colon H \to H$, the following notation:
\begin{equation} \label{liminf}
\beta_N^- (T) = \liminf_{n \to \infty} [a_{n^N} (T)]^{1 / n}
\end{equation} 
and:
\begin{equation} \label{limsup}
\beta_N^+ (T) = \limsup_{n \to \infty} [a_{n^N} (T)]^{1 / n} \, .
\end{equation} 
When these two quantities are equal, we write them $\beta_N (T)$.

\section{Pluripotential theory}

\subsection{Monge-Amp{\`e}re capacity}

Let $K$ be a compact subset of an open subset $\Omega$ of $\C^N$. The \emph{Monge-Amp\`ere capacity} of $K$ has been defined by Bedford and Taylor 
(\cite{BT}; see also \cite[Part~II, Chapter~1]{Klimek}) as:
\begin{displaymath} 
\capa (K) = \sup \bigg\{ \int_K (dd^c u)^N \, ; \ u \in {\cal PSH} (\Omega) \text{ and } 0 \leq u \leq 1 \text{ on } \Omega \bigg\} \, .
\end{displaymath} 

When $\Omega$ is bounded and hyperconvex, we have a more convenient formula (\cite[Proposition~5.3]{BT}, \cite[Proposition~4.6.1]{Klimek}):
\begin{equation} \label{more convenient}
\capa (K) = \int_{\Omega} (dd^c u_K^\ast)^N = \int_K (dd^c u_K^\ast)^N \, ,
\end{equation} 
(the positive measure $(dd^c u_K^\ast)^N$ is supported by $K$; actually by $\partial K$: see \cite[Properties~8.1~(c)]{Demailly}), where 
$u_K = u_{K, \Omega}$ is the \emph{relative extremal function} of $K$, defined, for  any subset $E \subseteq \Omega$, as:
\begin{equation} \label{fct extremale}
u_{E, \Omega} = \sup\{ v \in {\cal PSH} (\Omega) \, ; \ v \leq 0 \text{ and } v \leq - 1 \text{ on } E\} \, ,
\end{equation} 
and $u_{E, \Omega}^\ast$ is its upper semi-continuous regularization: 
\begin{displaymath} 
\qquad \quad u_{E, \Omega}^\ast (z) = \limsup_{\zeta \to z} u_{E, \Omega} (\zeta) \, , \qquad z \in \Omega \, ,
\end{displaymath} 
called the \emph{regularized relative extremal function} of $E$.
\smallskip\goodbreak

For an open subset $\omega$ of $\Omega$, its capacity is defined as:
\begin{displaymath} 
\capa (\omega) = \sup \{ \capa (K) \, ; \ K \text{ is a compact subset of } \omega \} \, .
\end{displaymath} 
When $\overline{\omega} \subset \Omega$ is a compact subset of $\Omega$, we have (\cite[equation~$(6.2)$]{BT}, \cite[Corollary~4.6.2]{Klimek}):
\begin{equation} \label{capa ouvert}
\capa (\omega) = \int_{\Omega} (dd^c u_\omega)^N \, .
\end{equation} 
The \emph{outer capacity} of a subset $E \subseteq \Omega$ is:
\begin{displaymath} 
\capa^\ast (E) = \inf \{ \capa (\omega) \, ; \ \omega \supseteq E \text{ and } \omega \text{ open} \} \, .
\end{displaymath} 
If $\Omega$ is hyperconvex and $E$ relatively compact in $\Omega$, then (\cite[Proposition~4.7.2]{Klimek}):
\begin{displaymath} 
\capa^\ast (E) = \int_\Omega (dd^c u_{E, \Omega}^\ast)^N \, .
\end{displaymath} 
\goodbreak

\noindent{\bf Remark.} A.~Zeriahi (\cite{ZER}) pointed out to us the following result.
\begin{proposition} 
Let $K$ be a compact subset of $\Omega$. Then:
\begin{displaymath} 
\capa (K) = \capa (\partial K) \, .
\end{displaymath} 
\end{proposition}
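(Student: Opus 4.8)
The plan is to show the two inequalities $\capa(\partial K)\le\capa(K)$ and $\capa(K)\le\capa(\partial K)$ separately. The first is immediate: since $\partial K\subseteq K$ and both are compact subsets of $\Omega$, the monotonicity of the Monge-Amp\`ere capacity (which follows directly from the defining supremum, as the admissible class of functions $u$ with $0\le u\le1$ on $\Omega$ does not change) gives $\capa(\partial K)\le\capa(K)$. So the content is entirely in the reverse inequality.

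For the reverse inequality, I would use the formula \eqref{more convenient}, valid since $\Omega$ is bounded and hyperconvex: $\capa(K)=\int_\Omega(dd^c u_K^\ast)^N$ where $u_K=u_{K,\Omega}$ is the relative extremal function of $K$. The key point is a comparison of the two regularized relative extremal functions $u_K^\ast$ and $u_{\partial K}^\ast$. On the one hand $u_{\partial K}^\ast\ge u_K^\ast$ by monotonicity of \eqref{fct extremale} (fewer constraints on the competitors $v$). On the other hand, I claim $u_{\partial K}^\ast\le u_K^\ast$ as well, so they coincide; indeed, if $v\in{\cal PSH}(\Omega)$ with $v\le0$ and $v\le-1$ on $\partial K$, then on the interior $\mathring K$ the function $v$ is psh and bounded above on $\partial K$ by $-1$. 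Here one must be slightly careful: $\mathring K$ need not be connected, but on each connected component $\omega$ of $\mathring K$ with $\overline\omega\subseteq K$, the boundary $\partial\omega$ is contained in $\partial K$, so by the maximum principle for plurisubharmonic functions (applied component-wise) $v\le-1$ on $\omega$, hence $v\le-1$ on all of $K$. Therefore $v$ is an admissible competitor for $u_{K,\Omega}$, which gives $u_{\partial K}\le u_K$ and thus, after regularization, $u_{\partial K}^\ast\le u_K^\ast$. Combining, $u_K^\ast=u_{\partial K}^\ast$, and integrating the Monge-Amp\`ere operator gives $\capa(K)=\capa(\partial K)$.

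The main obstacle is the maximum-principle step for the interior: one needs that a component $\omega$ of $\mathring K$ satisfies $\partial\omega\subseteq\partial K$ (true because $\omega$ is open, $\omega\subseteq K$, and any boundary point of $\omega$ lying in $\mathring K$ would force $\omega$ to be non-maximal), together with the fact that the generalized maximum principle holds for psh functions bounded above on the boundary of a bounded domain --- this is standard (see \cite[Corollary~2.9.9 or Theorem~2.6.2]{Klimek}), but the subtlety is that $v$ is only known to be $\le-1$ on $\partial K\supseteq\partial\omega$ and merely $\le0$ elsewhere, which is exactly what one needs since $\overline\omega$ is compact in $\Omega$. A cleaner alternative, avoiding components, is to observe directly that $\mathring K$ together with the set $\{v<-1\}$ cannot leave a point of $K$ uncovered: if $z_0\in K$ had $v(z_0)>-1$, then $z_0\notin\partial K$, so $z_0\in\mathring K$, and applying the maximum principle on the connected component of $\mathring K$ containing $z_0$ yields a contradiction. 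Either way the argument is short once this point is handled. I would present it as the two-inequality scheme above, spending essentially all the words on the interior maximum-principle argument.
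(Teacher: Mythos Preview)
Your proposal is correct and follows essentially the same approach as the paper: both arguments establish $u_{K,\Omega}=u_{\partial K,\Omega}$ by using the maximum principle to show that any competitor $v$ for $u_{\partial K,\Omega}$ is already $\le -1$ on all of $K$, and then invoke formula~\eqref{more convenient}. The paper's version is terser---it simply cites the maximum principle \cite[Corollary~2.9.6]{Klimek} without discussing connected components of $\mathring K$---but the content is the same.
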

\begin{proof}
Of course $u_K \leq u_{\partial K}$ since $\partial K \subseteq K$. Conversely, let $v \in {\cal PSH} (\Omega)$ non-positive 
such that $v \leq - 1$ on $\partial K$. By the maximum principle (see \cite[Corollary~2.9.6]{Klimek}), we get that $v \leq - 1$ on $K$. Hence $v \leq u_K$. 
Taking the supremum over all those $v$, we obtain $u_{\partial K} \leq u_K$, and therefore $u_{\partial K} = u_K$.

By \eqref{more convenient}, it follows that:
\begin{equation} 
\capa (K) = \int_{\Omega} (dd^c u_K^\ast)^N =  \int_{\Omega} (dd^c u_{\partial K}^\ast)^N = \capa (\partial K) \, . \qedhere
\end{equation} 
\end{proof}
%

\subsection{Regular sets}

Let $E \subseteq \C^N$ be bounded. Recall that the polynomial convex hull of $E$ is:
\begin{displaymath}
\hat E = \{ z \in \C \, ; \ | P (z) | \leq \sup_E |P| \text{ for every polynomial } P \} \, .
\end{displaymath}

A point $a \in \hat E$ is called \emph{regular} if $u^\ast_{E, \Omega} (a) = - 1$ 
for an open set $\Omega \supseteq \hat E$ (note that we always have $u_{E, \Omega} = u_{E, \Omega}  = - 1$ on the interior of $E$: see 
\cite[Properties~8.1~(c)]{Demailly}). The set $E$ is said to be \emph{regular} if all points of $\hat E$ are regular.

The \emph{pluricomplex Green function} of $E$, also called the \emph{$L$-extremal function} of $E$, is defined, for $z \in \C^N$, as:
\begin{displaymath} 
V_E (z) = \sup \{ v (z) \, ; \ v \in {\cal L}\, , \quad v \leq 0 \text{ on }  E \} \, ,
\end{displaymath} 
where ${\cal L}$ is the \emph{Lelong class} of all functions $v \in {\cal PSH} (\C^N)$ such that, for some constant $C > 0$:
\begin{displaymath} 
\qquad v (z) \leq C + \log (1 + |z|) \quad \text{for all } z \in \C^N \, .
\end{displaymath} 
A point $a \in \hat E$ is called \emph{$L$-regular} if $V_E^\ast (a) = 0$, where $V_E^\ast$ is the upper semicontinuous regularization of $V_E$.  The set 
$E$ is \emph{$L$-regular} if all points of $\hat E$ are $L$-regular. 

By \cite[Proposition~2.2]{Klimek-art} (see also \cite[Proposition~5.3.3, and Corollary~5.3.4]{Klimek}), for $E$ bounded and non pluripolar, and $\Omega$ a 
bounded open neigbourhood of $\hat E$, we have:
\begin{equation} \label{equiv u-V}
m (u_{E, \Omega} + 1) \leq V_E \leq M (u_{E, \Omega} + 1) 
\end{equation} 
for some positive constants $m, M$. Hence the regularity of $a \in \hat E$ is equivalent to its $L$-regularity. 

Recall that $E$ is pluripolar if there exists an open set $\Omega$ containing $E$ and $v \in {\cal PSH} (\Omega)$ such that $E \subseteq \{ v = - \infty\}$. 
This is equivalent to say that there exists a hyperconvex domain $\Omega$ of $\C^N$ containing $E$ such that $u_{E, \Omega}^\ast \equiv 0$ (see 
\cite[Corollary~4.7.3 and Theorem~4.7.5]{Klimek}). By Josefson's theorem (\cite[Theorem~4.7.4]{Klimek}), $E$ is pluripolar if and only if there exists 
$v \in {\cal PSH} (\C^N)$ such that $E \subseteq \{ v = - \infty\}$. Recall also that $E$ is pluripolar if and only if its outer capacity $\capa^\ast (E)$ is null 
(\cite[Theorem~4.7.5]{Klimek}).

When $\Omega$ is hyperconvex and $E$ is compact, non pluripolar, the regularity of $E$ implies that $u_{E, \Omega}$ 
and $V_E$ are continuous, on $\Omega$ and $\C^N$ respectively (\cite[Proposition~4.5.3 and Corollary~5.1.4]{Klimek}). Conversely, if $u_{E, \Omega}$ 
is continuous, for some hyperconvex neighbourhood $\Omega$ of $E$, then $u_{E, \Omega} (z) = - 1$ for all $z \in E$; hence $V_E (z) = 0$ for all $z \in E$, 
by \eqref{equiv u-V}; but $V_E = V_{\hat E}$ when $E$ is compact (\cite[Theorem~5.1.7]{Klimek}), so $V_E (z) = 0$ for all $z \in \hat E$; by 
\eqref{equiv u-V} again, we obtain that $u_{E, \Omega} (z) = - 1$ for all $z \in \hat E$; therefore $E$ is regular. In the 
same way, the continuity of $V_E$ implies the regularity of $E$. These results are due to Siciak (\cite[Proposition~6.1 and Proposition~6.2]{Siciak}). 
\smallskip

Every closed ball $B = B (a, r)$ of an arbitrary norm $\| \, . \, \|$ on $\C^N$ is regular since its $L$-extremal function is:
\begin{displaymath} 
V_B (z) = \log^+ \big( \| z - a \| / r) 
\end{displaymath} 
(\cite[p.~179, \S~2.6]{Siciak}).

\subsection{Zakharyuta's formula} 

We will need a formula that Zakharyuta, in order to solve a problem raised by Kolmogorov, proved, conditionally to a conjecture, called Zakharyuta's conjecture, 
on the uniform approximation of the relative  extremal function $u_{K, \Omega}$ by pluricomplex Green functions. This conjecture has been proved by Nivoche 
(\cite[Theorem~A]{Nivoche}), in a more general setting that we state below:
\begin{theorem} [Nivoche]
Let $K$ be a regular compact subset of a bounded hyperconvex domain $\Omega$ of $\C^N$. Then for every $\eps > 0$ and $\delta$ small enough, there exists 
a pluricomplex Green function $g$ on $\Omega$ with a finite number of logarithmic poles such that: \par
\smallskip
$1)$ the poles of $g$ lie in $W = \{ z \in \Omega \, ; \ u_K (z) < - 1 + \delta\}$; \par
\smallskip
$2)$ we have, for every $z \in \overline{\Omega} \setminus W$:
\begin{displaymath} 
(1 + \eps) \, g (z) \leq u_K (z) \leq (1 - \eps)\, g (z) \, .
\end{displaymath} 
\end{theorem}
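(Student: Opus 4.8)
The plan is to reduce the statement to the uniform approximation of the equilibrium potential of a ``fat'' compact neighbourhood of $K$, and then to construct $g$ by discretising the corresponding Monge--Amp\`ere equilibrium measure; the serious point is the uniform control of the resulting Green potential away from its poles.

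\emph{Reduction to a fat compact set.} Since $K$ is regular, $u_K := u_{K, \Omega}$ is continuous, equals $-1$ on $K$, and $(dd^c u_K)^N = 0$ on $\Omega \setminus K$. Fix $\delta > 0$ small and put $\hat W = \{ z \in \Omega : u_K (z) \le -1 + \delta \}$, a compact neighbourhood of $K$ with non-empty interior. The function $u_K / (1 - \delta)$ is psh, $\le 0$ and maximal on $\Omega \setminus \hat W$ (where $(dd^c u_K)^N = 0$), it equals $-1$ on $\partial \hat W$ and $0$ on $\partial \Omega$; by the comparison principle it coincides with the relative extremal function $u_{\hat W, \Omega}$ on $\overline\Omega \setminus W$. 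So it is enough to find a finite-pole pluricomplex Green function $g_0$ with poles in $W$ such that $(1 + \eps') g_0 \le u_{\hat W, \Omega} \le (1 - \eps') g_0$ on $\overline\Omega \setminus W$ for some $\eps' \asymp \eps$; multiplying the pole weights of $g_0$ by $1 - \delta$ then gives the required $g$. The gain is that $\hat W$ is \emph{thick}: $u^\ast_{\hat W, \Omega} = -1$ on all of $\hat W$, not merely on a thin set, which leaves room to place poles; moreover its equilibrium measure $\mu = (dd^c u^\ast_{\hat W, \Omega})^N$ is a positive measure of finite mass $M_0 = \capa^\ast (\hat W)$ carried by $\partial \hat W$, and after shrinking $\delta$ slightly we may assume $\partial \hat W \subset W$.

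\emph{Discretisation and the two-sided estimate.} For a large integer $m$ I would choose points $p_1, \ldots, p_m \in W$ and weights $c_1, \ldots, c_m > 0$ so that the atomic measure $(2\pi)^N \sum_j c_j \delta_{p_j}$ is weak-$\ast$ close to $\mu$ with total mass close to $M_0$, and let $g_m$ be the Green function of $\Omega$ with these weighted logarithmic poles, so that $g_m \le 0$, $g_m \to 0$ at $\partial \Omega$, and $(dd^c g_m)^N = (2\pi)^N \sum_j c_j \delta_{p_j}$. The upper bound $g_m \le u^\ast_{\hat W, \Omega} / (1 - \eps')$ on $\overline\Omega \setminus W$ is the easy half: taking $\sum_j c_j$ slightly above $M_0 / (2\pi)^N$ and using the domination principle for the complex Monge--Amp\`ere operator --- the mass of $g_m$ sits inside $W$, where $u^\ast_{\hat W, \Omega} = -1$ --- pushes $g_m$ below $u^\ast_{\hat W, \Omega} / (1 - \eps')$ off $W$. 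The lower bound $g_m \ge u^\ast_{\hat W, \Omega} / (1 + \eps')$ on $\overline\Omega \setminus W$ is the crux, and this is exactly where Zakharyuta's original argument was only conditional: it asserts that the logarithmic singularities of $g_m$ do not ``leak out'' of $W$, so that $g_m$ stays nearly maximal, hence nearly equal to $u^\ast_{\hat W, \Omega}$, off the poles. Establishing it forces the configuration $(p_j, c_j)$ to be chosen very finely, adapted to the geometry of the level set $\partial \hat W$ and to the local mass distribution of $\mu$; the estimates rely on Bedford--Taylor capacities of small balls around the poles, subextension lemmas for plurisubharmonic functions, and comparison estimates in the spirit of B{\l}ocki and Demailly, and they use in an essential way the \emph{continuity} of $u_K$ provided by the regularity hypothesis. (Alternatively, one may follow Zakharyuta's reduction to the uniform approximation of the Lelong extremal function $V_K$ by normalised logarithms $\tfrac1k \log |P_k|$ of extremal polynomials, and invoke Nivoche's earlier work on the $\eps$-entropy of classes of holomorphic functions.) Granting the lower bound for $m$ large enough, we set $g_0 = g_m$, rescale as above, and adjust $\eps'$ in terms of $\eps$ and $\delta$ to finish.
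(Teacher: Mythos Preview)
The paper does not prove this theorem at all: it is quoted verbatim from Nivoche's Inventiones paper and used as a black box (``This conjecture has been proved by Nivoche \cite[Theorem~A]{Nivoche}, in a more general setting that we state below''). So there is no ``paper's own proof'' to compare your proposal against; whatever you write here is a sketch of Nivoche's argument, not of anything in the present article.

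That said, your outline is in the right spirit --- pass to a sublevel set $\hat W$ of $u_K$, discretise the equilibrium measure $(dd^c u_{\hat W,\Omega}^\ast)^N$, and compare the resulting multipole Green function with $u_{\hat W,\Omega}$ --- and you correctly identify the lower bound $g_m \ge u_{\hat W,\Omega}/(1+\eps')$ on $\overline\Omega \setminus W$ as the heart of the matter. But your treatment of \emph{both} inequalities is too soft to count as a proof. For the ``easy'' upper bound, the domination/comparison principle does not work the way you suggest: having the total Monge--Amp\`ere mass of $g_m$ slightly exceed that of $u_{\hat W,\Omega}$, and having it sit inside $W$, does \emph{not} by itself force $g_m \le u_{\hat W,\Omega}/(1-\eps')$ outside $W$; one needs a genuine barrier or envelope argument. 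For the lower bound you essentially say ``this is hard, and one uses capacities of small balls, subextension lemmas, and comparison estimates''; that is a description of the toolbox, not a proof. The whole content of Nivoche's paper is precisely to manufacture the configuration $(p_j,c_j)$ and to carry out these estimates, via a delicate induction on carefully chosen exhaustions and a quantitative analysis of the pluricomplex Green function near its poles. Your parenthetical alternative (reduce to approximating $V_K$ by $\tfrac1k\log|P_k|$ and ``invoke Nivoche's earlier work'') is circular: that earlier work is exactly the same theorem in a different guise. In short, you have the architecture right, but the load-bearing step is asserted rather than proved; for the purposes of this paper, the honest move is simply to cite \cite{Nivoche}.
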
 

In order to state Zakharyuta's formula, we need some additional notations.

Let $K$ be a compact subset of $\Omega$ with non-empty interior, and $A_{K}$ the set of restrictions to $K$ of those functions that are analytic and bounded 
by $1$, i.e. those functions belonging to the unit ball $B_{H^{\infty} (\Omega)}$ of the space $H^{\infty}(\Omega)$ of the bounded analytic functions in 
$\Omega$, considered as a subset of the space $\mathcal{C}(K)$ of complex functions defined on $K$, equipped with the sup-norm on $K$. 

Let $d_{n} (A_{K}) $ be the $n$th-width of  $A_{K}$ in $\mathcal{C}(K)$, namely:
\begin{equation} 
d_{n} (A_{K}) = \inf_{L} \bigg[ \sup_{f \in A_{K}} {\rm dist}\, (f, L) \bigg] \, ,
\end{equation} 
where $L$ runs over all $k$-dimensional subspaces of  $\mathcal{C}(K)$, with $k < n$. 

Equivalently, $d_{n}(A_{K})$ is the $n$th-Kolmogorov number of the natural injection $J$ of  $H^{\infty}(\Omega)$ into  $\mathcal{C}(K)$ (recall that 
$K$ has non-empty interior). It is convenient to set, as in \cite{ZAK}:
\begin{equation} 
\tau_N (K) = \frac{1\ }{(2\pi)^N}\, \capa (K) 
\end{equation} 
and:
\begin{equation} 
\Gamma_N (K) = \exp \Bigg[ - \bigg( \frac{N!}{\tau_N (K)} \bigg)^{1/N} \Bigg] \, \raise 1pt \hbox{,}
\end{equation} 
i.e.:
\begin{equation} 
\Gamma_N (K) = \exp \bigg[ - 2 \pi \, \bigg(\frac{N!}{\capa (K)}\bigg)^{1/N} \bigg]\, . 
\end{equation} 
Observe that $\capa (K) > 0$ since we assumed that $K$ has non-empty interior. Now, we have (\cite[Theorem~5.6]{ZAK}; see also 
\cite[Theorem~5]{ZAK-2009} or \cite[pages 30--32]{Yazici}, for a detailed proof):
\begin{theorem} [Zakharyuta-Nivoche] \label{zak-niv}
Let $\Omega$ be a bounded hyperconvex domain and $K$ a regular compact subset of $\Omega$ with non-empty interior, which is holomorphically convex 
in $\Omega$ (i.e. $K = \tilde K_\Omega$). 
Then:
\begin{equation} 
- \log d_{n} (A_{K}) \sim \bigg( \frac{N!}{\tau_N (K)} \bigg)^{1/N} \, n^{1/N} \, .
\end{equation} 
\end{theorem}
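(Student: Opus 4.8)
The plan is to derive the asymptotics of $d_n(A_K)$ from Nivoche's theorem by sandwiching the relative extremal function $u_K$ between two pluricomplex Green functions with finitely many poles, and then to compute the widths of the corresponding ``model'' problems, whose behaviour is governed by the Monge--Amp\`ere mass $(dd^c g)^N$ at the poles.

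\medskip

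First I would reduce the statement to an estimate on a single Green function. Fix $\eps>0$; choose $\delta$ small and a pluricomplex Green function $g=g(\cdot,p_1,\dots,p_J)$ with poles in $W=\{u_K<-1+\delta\}$ as in Nivoche's theorem, so that $(1+\eps)g\le u_K\le (1-\eps)g$ on $\overline\Omega\setminus W$. Passing to sublevel sets, this means that $\{g\le -\frac{1}{1-\eps}\}$ and $\{g\le -\frac{1}{1+\eps}\}$ are squeezed around $K=\{u_K\le -1\}$, up to the small set $W$. Using the monotonicity and (quasi-)continuity of widths under such inclusions of compact sets, together with the fact that $W$ contributes negligibly (it is contained in an arbitrarily thin neighbourhood of the poles as $\delta\to 0$, hence of small capacity), I would show
\begin{displaymath}
\limsup_n \frac{-\log d_n(A_K)}{n^{1/N}} \le (1+\eps')\,\Big(\frac{N!}{\tau_N(K)}\Big)^{1/N}
\end{displaymath}
and the matching $\liminf\ge (1-\eps')(\cdots)$, provided one knows the analogue of the theorem for the compact sublevel sets of a Green function $g$ with finitely many poles. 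So the real content is:

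\medskip

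\emph{the key step} is to prove the asymptotics $-\log d_n(A_{K_g})\sim\big(N!/\tau_N(K_g)\big)^{1/N} n^{1/N}$ when $K_g=\{g\le c\}$ is a sublevel set of a pluricomplex Green function with $J$ logarithmic poles $p_1,\dots,p_J$. Near each pole, $g$ behaves like $c_j\log|z-p_j|$, so locally $A_{K_g}$ looks like the restriction to a small polydisk/ball of bounded holomorphic functions, and the corresponding width is controlled by the one-pole (classical) computation, which is essentially a many-variable Taylor-coefficient / Weierstrass-type estimate giving a rate with constant proportional to the local Lelong mass. The global width is then obtained by combining the $J$ local contributions: since widths of a direct-sum-type problem add on the \emph{reciprocal $N$-th power} scale (because $d_{n_1+\dots+n_J}\le \prod$-type submultiplicativity, or rather: $-\log d_n$ of a product behaves like an inf-convolution), one gets that the exponent constant for $K_g$ is $\big(\sum_j 2\pi(N!/(c_j\,\text{mass}_j))\big)$-type expression, which must be reorganized to read exactly as $2\pi(N!/\capa(K_g))^{1/N}$. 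Verifying that $\capa(K_g)=\int(dd^c u_{K_g})^N$ coincides with the total Monge--Amp\`ere mass distributed at the poles is a pluripotential-theoretic computation using $(dd^c g_a)^N=(2\pi)^N\delta_a$ and the comparison between $u_{K_g}$ and $g$ on sublevel sets.

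\medskip

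\emph{The main obstacle} I anticipate is precisely this last combinatorial/analytic step: translating the sum of the local (one-pole) rates into the single closed-form constant $\big(N!/\tau_N(K)\big)^{1/N}$, i.e. showing that the inf-convolution of the $J$ local asymptotics reproduces the capacity of the whole sublevel set rather than some larger quantity. This is where Zakharyuta's original argument is delicate, and where one must use that $u_{K,\Omega}$ is, up to $\eps$, genuinely a normalized pluricomplex Green function (the content of Zakharyuta's conjecture / Nivoche's theorem) rather than merely comparable to one. A secondary technical point is the justification that the small poles-neighbourhood $W$ does not affect the width asymptotics: this requires a quantitative ``capacity of a thin set is small, hence its widths decay faster'' lemma, or an argument via the quasi-continuity of $u_K$ and a gluing of extremal functions.
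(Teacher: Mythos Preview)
The paper does not prove this theorem at all: it is quoted as \cite[Theorem~5.6]{ZAK} (see also \cite[Theorem~5]{ZAK-2009} and \cite[pages~30--32]{Yazici} for a detailed proof), and is used as a black box in the subsequent arguments. So there is no ``paper's own proof'' to compare your proposal against.

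That said, your outline is recognisably the Zakharyuta strategy: reduce via Nivoche's theorem to sublevel sets of a pluricomplex Green function with finitely many logarithmic poles, handle the model problem by a local Taylor/polynomial argument near each pole, and then recombine. Two comments on accuracy. First, in Zakharyuta's argument the ``combinatorial/analytic step'' you flag as the main obstacle is handled not by an inf-convolution of local rates but through the construction of a common (extendible) basis adapted to the pair $(K,\Omega)$, whose elements are essentially monomials centred at the various poles; the counting of basis elements below a given level is what produces the constant $\big(N!/\tau_N(K)\big)^{1/N}$, via an estimate of the type \eqref{mata}. Your ``direct-sum / submultiplicativity'' heuristic points in the right direction but would need to be replaced by this basis construction to become a proof. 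Second, the identification $\capa(K_g)$ with the total Monge--Amp\`ere mass at the poles is indeed the pluripotential input, and the relation $u_{\bar B_{\Omega,g}(r),\Omega}=g/|r|$ on the relevant set (cf.\ the discussion around \eqref{mesures DMA} and \cite{Tiba}) is what makes it clean; your sketch of this part is correct. The handling of the thin neighbourhood $W$ is, as you suspect, a genuine technical point in the cited proofs; it is dealt with by letting $\delta\to 0$ and using that $K$ is regular, so that $u_K$ is continuous and the sublevel sets shrink to $K$.
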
 

Here $\tilde K_\Omega$ is the holomorphic convex hull of $K$ in $\Omega$, that is:
\begin{displaymath}
\tilde K_\Omega = \{  z \in \Omega \, ; \ | f (z) | \leq \sup_K |f| \text{ for every } f \in {\cal O} (\Omega) \} \, ,
\end{displaymath}
where ${\cal O} (\Omega)$ is the set of all functions holomorphic in $\Omega$. 
\smallskip

Relying on that theorem, which may be seen as the extension of a result of Erokhin, proved in 1958 (see \cite{Erokhin};  see also 
Widom \cite{WID} which proved a more general result, with a different proof), to dimension $N > 1$, and as a result on the approximation of functions, we 
will give an application to the study of approximation numbers of a composition operator on $H^2 (\Omega)$ for a bounded symmetric domain of $\C^N$.
\goodbreak

\section{The spectral radius type formula}


In \cite[Section~6.2]{LIQR}, we proved the following result.
\begin{theorem}\label{kara}
Let $\phi \colon \D^N \to \D^N$ be given by $\phi (z_1, \ldots, z_N) = (r_1 z_1, \ldots, r_N z_N)$ where $0 < r_j < 1$.  Then:
\begin{displaymath} 
\beta_{N} (C_\phi) = \Gamma_N \big[ \overline{\phi (\D^N)} \big] = \Gamma_N \big[ \phi (\D^N)\big] \, .
\end{displaymath} 
\end{theorem}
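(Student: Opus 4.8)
The plan is to reduce the statement about approximation numbers $a_n(C_\phi)$ on $H^2(\D^N)$ to the Zakharyuta--Nivoche theorem (Theorem~\ref{zak-niv}) applied to the compact set $K = \overline{\phi(\D^N)}$, which in this diagonal case is the closed polydisk $\{(z_1,\dots,z_N) : |z_j| \le r_j\}$, a regular, holomorphically convex, non-pluripolar compact subset of $\D^N$ with non-empty interior. The second equality $\Gamma_N[\overline{\phi(\D^N)}] = \Gamma_N[\phi(\D^N)]$ is essentially free: by the Remark of A.~Zeriahi, $\capa(K) = \capa(\partial K)$, and more directly the relative extremal functions of $\phi(\D^N)$ and its closure agree off a pluripolar set, so the Monge--Amp\`ere capacities coincide; hence $\Gamma_N$ takes the same value on both. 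The real content is the first equality, and for that I would establish the two-sided comparison $\beta_N^\pm(C_\phi) = \Gamma_N[K]$ by sandwiching $a_{n^N}(C_\phi)$ between quantities governed by the Kolmogorov widths $d_m(A_K)$.

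First, for the \emph{upper bound} $\beta_N^+(C_\phi) \le \Gamma_N[K]$: since $\phi(\D^N) \Subset \D^N$, $C_\phi$ factors through the restriction map to $K$. Concretely, write $C_\phi = J_\phi \circ R_K$ where $R_K \colon H^2(\D^N) \to \mathcal{C}(K)$ is restriction (or rather, one uses that $C_\phi f = (f|_K)\circ\phi$ and that $f \mapsto f|_K$ sends $B_{H^2}$ into a multiple of $B_{H^\infty}$ restricted to a slightly larger compact, using the boundedness of point evaluations from Proposition~\ref{eval}). Then $a_n(C_\phi) = d_n(C_\phi) \lesssim d_n(A_{K'})$ for a compact $K'$ slightly larger than $K$ but still $\Subset \D^N$, and letting $K' \downarrow K$ together with the continuity of $\capa$ under such decreasing limits and Theorem~\ref{zak-niv} gives $\limsup [a_{n^N}(C_\phi)]^{1/n} \le \exp[-(N!/\tau_N(K))^{1/N}] = \Gamma_N[K]$. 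For the \emph{lower bound} $\beta_N^-(C_\phi) \ge \Gamma_N[K]$: here one produces many functions in the range of $C_\phi$ that are far apart, or dually, one bounds $a_n(C_\phi)$ below by $c\, d_n(A_{K''})$ for a compact $K''$ slightly \emph{inside} $\phi(\D^N)$, using that $H^2(\D^N)$ contains $H^\infty$ with controlled norm and that evaluation functionals on $K''$ are uniformly comparable; applying Theorem~\ref{zak-niv} to $K''$ and letting $K'' \uparrow \phi(\D^N)$ (again invoking continuity of the capacity) yields the matching lower bound.

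The step I expect to be the main obstacle is the passage between $s$-numbers of the \emph{Hilbert-space} operator $C_\phi$ on $H^2(\D^N)$ and the \emph{sup-norm} Kolmogorov widths $d_n(A_K)$ appearing in Theorem~\ref{zak-niv}: these live in different categories ($H^2 \to H^2$ versus $H^\infty \to \mathcal{C}(K)$), so one must carefully insert the compact inclusions $K'' \Subset \phi(\D^N) \subseteq \overline{\phi(\D^N)} = K \Subset K' \Subset \D^N$ and control all the norms of the intermediate inclusion maps $H^2(\D^N) \hookrightarrow \mathcal{C}(K')$ and $\mathcal{C}(K'')$-type restrictions independently of $n$, so that the multiplicative constants disappear after taking $n$-th roots and $n \to \infty$. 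The second, more technical issue is the stability of the capacity: one needs $\capa(K') \to \capa(K)$ as $K' \downarrow K$ and $\capa(K'') \to \capa(\phi(\D^N))$ as $K'' \uparrow \phi(\D^N)$, which for the explicit polydisk $K$ can either be checked by hand (the relative extremal function of a polydisk in a polydisk is computable) or deduced from standard continuity properties of the Monge--Amp\`ere capacity for decreasing/increasing sequences of (regular) compacta. Once these are in place, the squeeze $\Gamma_N[\phi(\D^N)] \le \beta_N^-(C_\phi) \le \beta_N^+(C_\phi) \le \Gamma_N[\overline{\phi(\D^N)}]$ combined with $\Gamma_N[\phi(\D^N)] = \Gamma_N[\overline{\phi(\D^N)}]$ forces equality throughout, giving $\beta_N(C_\phi) = \Gamma_N[\overline{\phi(\D^N)}] = \Gamma_N[\phi(\D^N)]$.
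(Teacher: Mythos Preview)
Your approach is correct but takes a genuinely different route from the paper's. The paper's proof (carried out in detail in the reference \cite{LIQR}, and only sketched here) is entirely elementary and does \emph{not} invoke Theorem~\ref{zak-niv}: since $\phi(z)=(r_1z_1,\dots,r_Nz_N)$, the operator $C_\phi$ is diagonal in the orthonormal monomial basis $(z^\alpha)_\alpha$ of $H^2(\D^N)$, with singular values $r^\alpha=r_1^{\alpha_1}\cdots r_N^{\alpha_N}$. Writing $\sigma_j=\log(1/r_j)$, the $n$-th approximation number is read off from the decreasing rearrangement of $\{e^{-\sum\alpha_j\sigma_j}\}$, and the asymptotics follow from the lattice-point count \eqref{mata}. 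On the capacity side, Blocki's product formula gives $\capa[\overline{\phi(\D^N)}]=\prod_j\capa_1(\overline{r_j\D})$, i.e.\ $\tau_N(K)=1/(\sigma_1\cdots\sigma_N)$, and the two computations match to give $\beta_N(C_\phi)=\Gamma_N[K]$.

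Your argument, by contrast, applies the full Zakharyuta--Nivoche machinery to the polydisk $K$ and sandwiches $a_{n^N}(C_\phi)$ between Kolmogorov widths---which is exactly the strategy the paper develops \emph{afterwards} (Theorems~\ref{th mino} and~\ref{th majo}) to handle general non-degenerate symbols. So you are in effect deducing the special case from the general theorems that it is meant to motivate. This is logically fine, and your identification of the two technical points (bridging the $H^2\!\to\! H^2$ and $H^\infty\!\to\!\mathcal C(K)$ categories via uniformly bounded evaluation/inclusion maps, and the continuity of $\capa$ along $K'\downarrow K$ and $K''\uparrow\phi(\D^N)$) is accurate. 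The payoff of the paper's direct proof is that it is completely independent of Nivoche's deep theorem, yields explicit constants, and simultaneously verifies that the quantity $\Gamma_N$ is the ``right'' one---thereby justifying the general conjecture. The payoff of your route is uniformity: once the general sandwich is in place, the diagonal case requires no separate computation.
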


The proof was simple, based on result of Blocki \cite{Blocki} on the Monge-Amp\`ere capacity of a cartesian product, and on the estimation, when $A \to \infty$, 
of the number $\nu_A$ of $N$-tuples $\alpha = (\alpha_1, \ldots, \alpha_N)$ of non-negative integers  $\alpha_j$ such that 
$\sum_{j = 1}^N \alpha_j \sigma_j \leq A$, where the numbers $\sigma_j > 0$ are fixed. The estimation was:
\begin{equation}\label{mata} 
\nu_A\sim \frac{A^N}{N ! \, \sigma_1 \cdots \sigma_N} \, \cdot 
\end{equation}
As J.~F.~Burnol pointed out to us, this is a consequence of the following elementary fact. Let $\lambda_N$ be the Lebesgue measure on $\R^N$, and let  $E$ be 
a compact subset of $\R^N$ such that $\lambda_{N} (\partial{E}) = 0$. Then:
\begin{displaymath} 
\lambda_N (E) = \lim_{A \to \infty} A^{- N} |(A \times E) \cap \Z^N| \, .
\end{displaymath} 
Then, just take $E = \{(x_1, \ldots, x_N) \, ; \ x_j \geq 0 \text{ and } \sum_{j = 1}^N x_j  \sigma_j \leq 1\}$.
\smallskip

In any case, this lets us suspect that the formula of Theorem~\ref{kara} holds in much more general cases. This is not quite true, as evidenced by our 
counterexample of \cite[Theorem~5.12]{LIQR}. Nevertheless, in good cases, this formula holds, as we will see in the next sections.

\medskip

In remaining of this section, we consider functions $\phi \colon \Omega \to \Omega$ such that $\overline{\phi (\Omega)} \subseteq \Omega$. If $\rho$ is an 
exhaustion function for $\Omega$, there is some $R_0 < 0$ such that $\overline{\phi (\Omega)} \subseteq B_\Omega (R_0)$, and that  implies that $C_\phi$ 
maps $H^2 (\Omega)$ into itself and is a compact operator (see \cite[Theorem~8.3]{Pol-Stess}, since, with their notations, for $r > R_0$, we have 
$T (r) = \emptyset$ and hence $\delta_\phi (r) = 0$) . 

\subsection{Minoration}

Recall that every hyperconvex domain $\Omega$ is pseudoconvex. By H.~Cartan-Thullen and Oka-Bremermann-Norguet theorems, being pseudoconvex is 
equivalent to being a domain of holomorphy, and equivalent to being holomorphically convex (meaning that if $K$ is a compact subset in $\Omega$, then its 
holomorphic hull $\tilde K$ is also contained in $\Omega$): see \cite[Corollaire~7.7]{Thiebaut}. Now (see \cite[Chapter~5, Exercise~11]{Krantz}, a domain of 
holomorphy $\Omega$ is said a \emph{Runge domain} if every holomorphic function in $\Omega$ can be approximated uniformly on its compact subsets by 
polynomials, and that is equivalent to saying that the polynomial hull and the holomorphic hull of every compact subset of $\Omega$ agree. By 
\cite[Chapter~5, Exercise~13]{Krantz}, every circled domain (in particular every bounded symmetric domain) is a Runge domain. 

\begin{definition}
A hyperconvex domain $\Omega$ is said \emph{strongly regular} if there exists a continuous {\it psh} exhaustion function $\rho$ such that all the sub-level sets:
\begin{displaymath} 
\Omega_c = \{ z \in \Omega \, ; \ \rho (z) < c \}
\end{displaymath} 
($c < 0$) have a regular closure. 
\end{definition}

For example, every bounded symmetric domain $\Omega$ is strongly regular since if $\| \, . \, \|$ is the associated norm, its sub-level sets $\Omega_c$ (with 
$\rho (z) = \log \| z \|$) are the open balls $B (0, \e^c)$, and the closed balls are regular, as said above.

\begin{theorem} \label{th mino}
Let $\Omega$ be a strongly regular bounded hyperconvex and Runge domain in $\C^N$, and let $\phi \colon \Omega \to \Omega$ be an analytic function 
such that $\overline{\phi (\Omega)} \subseteq \Omega$, and which is non-degenerate. Then:
\begin{equation} 
\Gamma_N \big[ \phi (\Omega) \big] \leq \beta_N^- (C_\phi) \, .
\end{equation} 
\end{theorem}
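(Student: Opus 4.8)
The starting point is the Courant--Fischer description of the singular numbers: since, by Allahverdiev's theorem, $a_k(C_\phi)=s_k(C_\phi)$ is the $k$‑th eigenvalue of $|C_\phi|$, one has
\begin{equation*}
a_k(C_\phi)=\max_{\dim V=k}\ \min_{0\neq f\in V}\ \frac{\|f\circ\phi\|_{H^2(\Omega)}}{\|f\|_{H^2(\Omega)}}\,,
\end{equation*}
so it suffices, for each large $n$, to exhibit a subspace $V\subseteq H^2(\Omega)$ with $\dim V\ge n^N$ on which $C_\phi$ shrinks norms by no more than $\Gamma_N[\phi(\Omega)]^{\,n(1+o(1))}$. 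A first reduction replaces the open set $\phi(\Omega)$ by a regular, holomorphically convex (in $\Omega$) compact set $K\subseteq\phi(\Omega)$ with non‑empty interior: since $\capa$ is an inner capacity for open sets, $\capa[\phi(\Omega)]=\sup\{\capa(K):K\subseteq\phi(\Omega)\text{ compact}\}$, and — this is where $\Omega$ being strongly regular and Runge is used — the supremum may be taken over such $K$, so that $\Gamma_N(K)\uparrow\Gamma_N[\phi(\Omega)]$ along a suitable exhaustion of $\phi(\Omega)$; it is then enough to prove $\Gamma_N(K)\le\beta_N^-(C_\phi)$ for each fixed such $K$.

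Fix $K$ and $\eps>0$ and feed $K$ into Nivoche's theorem, obtaining a pluricomplex Green function $g$ on $\Omega$ with finitely many logarithmic poles $p_1,\dots,p_J$ lying in $W=\{u_{K,\Omega}<-1+\delta\}$ (hence deep inside $K$, so inside $\phi(\Omega)$), with weights $c_1,\dots,c_J>0$, and with $(1+\eps)g\le u_{K,\Omega}\le(1-\eps)g$ off $W$; comparing Monge--Amp\`ere masses, this forces $(2\pi)^N\sum_i c_i^N=\int_\Omega(dd^c g)^N$ to be within a factor $(1-\eps)^{-N}$ of $\capa(K)$. Because $K\subseteq\phi(\Omega)$ and $\phi$ is non‑degenerate, each pole is of the form $p_i=\phi(q_i)$, and — the critical set of $\phi$ being a proper analytic subvariety — after a harmless perturbation of the $p_i$ inside $W$ we may take the $q_i$ to be regular points of $\phi$, near which $\phi$ is biholomorphic onto a neighbourhood of $p_i$. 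Since $(dd^c g)^N$ is compactly supported, the Poletsky--Stessin space $H^2_{g}(\Omega)$ coincides with $H^2(\Omega)=H^2_{g_\Omega}(\Omega)$ with equivalent norms (so the value of $\beta_N^-$ is unchanged), and one may work in this model space, which is the exact multi‑pole analogue of the situation of Theorem~\ref{kara}.

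In this model one takes $V$ to be the span of the ``building blocks'' adapted to the condenser $(K,\Omega)$ — monomial‑type elements $e_\alpha$, $\alpha\in\N^N$, playing the role of the monomials $z^\alpha$ in the proof of Theorem~\ref{kara} (equivalently, of Zakharyuta's extremal basis) — carrying those $\alpha$ whose condenser ``weighted degree'' $\ell(\alpha)$ is $\le A$. For such a block, the non‑degeneracy of $\phi$ at the $q_i$ makes $\|e_\alpha\circ\phi\|_{H^2(\Omega)}\big/\|e_\alpha\|_{H^2(\Omega)}$ equal, up to a subexponential factor, to $e^{-\ell(\alpha)}$, exactly as $z^\alpha\mapsto r^\alpha z^\alpha$ yields the factor $\prod_j r_j^{\alpha_j}$ in the diagonal case; simultaneously this keeps $\dim V$ equal to the number of admissible $\alpha$. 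By the Burnol/lattice‑point count \eqref{mata}, $\#\{\alpha\in\N^N:\ell(\alpha)\le A\}\sim A^N\,\tau_N(K)/N!$ (this is precisely the point where Theorem~\ref{zak-niv} — i.e.\ the identification of the extremal volume with $\tau_N(K)=\capa(K)/(2\pi)^N$ — is invoked), so choosing $A=A(n)$ minimal with $\dim V\ge n^N$ gives $A(n)\sim n\,(N!/\tau_N(K))^{1/N}$, whence $\min_{0\ne f\in V}\|f\circ\phi\|/\|f\|\ge e^{-A(n)(1+o(1))}=\Gamma_N(K)^{\,n(1+o(1))}$. Therefore $a_{n^N}(C_\phi)\ge\Gamma_N(K)^{\,n(1+o(1))}$, so $\beta_N^-(C_\phi)\ge\Gamma_N(K)$; letting $K\uparrow\phi(\Omega)$ finishes.

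The hard point is obtaining the \emph{sharp} exponent $2\pi\,\capa[\phi(\Omega)]^{-1/N}$ rather than some larger quantity. A naive test subspace — polynomials of degree $\le m$ together with a Bernstein--Walsh comparison of $\overline\Omega$ and $\overline{\phi(\Omega)}$ — only produces the exponent $\sup_{\overline\Omega}V_{\overline{\phi(\Omega)}}$, which in general strictly exceeds $2\pi\,\capa[\phi(\Omega)]^{-1/N}$ (already visible for anisotropic polydisks, where the true exponent is a geometric mean of the $\log(1/r_j)$); recovering the true rate is exactly what forces the use of the condenser‑adapted building blocks, hence of Nivoche's theorem and the Zakharyuta--Nivoche machinery of Theorem~\ref{zak-niv}. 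The remaining friction is the first reduction (exhausting the open set $\phi(\Omega)$ from inside by compacts that are simultaneously regular, holomorphically convex in $\Omega$, and of non‑empty interior — where ``strongly regular'' and ``Runge'' enter) and the bookkeeping that turns Nivoche's $(1\pm\eps)$‑estimates and the lattice count into the clean limit $\Gamma_N(K)$.
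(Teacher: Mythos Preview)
Your proposal has a genuine gap at its core. The paper's argument does \emph{not} proceed via Courant--Fischer and a hand-built test subspace; it uses instead the alternative description \eqref{alter},
\[
a_{n}(C_\phi)=\inf_{\dim G<n}\ \sup_{f\in B_{H^2}}\ {\rm dist}\,(C_\phi f, C_\phi G)\,,
\]
together with the Zakharyuta--Nivoche Theorem~\ref{zak-niv} as a black box. Given any subspace $G$ of $H^2(\Omega)$ with $\dim G<n^N$, one views $G$ (by restriction) as a subspace of $\mathcal C(\hat K_j)$, where $K_j=\phi(\hat H_j)$ and $H_j=\overline{\Omega_{r_j}}$; the lower bound in Theorem~\ref{zak-niv} supplies an $f\in B_{H^\infty}\subseteq B_{H^2}$ with $\|f-g\|_{\mathcal C(\hat K_j)}$ large for every $g\in G$. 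The trivial identity $\|f-g\|_{\mathcal C(K_j)}=\|C_\phi f-C_\phi g\|_{\mathcal C(H_j)}$ then transfers this to a lower bound in $\mathcal C(H_j)$, and the reproducing-kernel estimate \eqref{sup} converts sup-norm on $H_j$ into $H^2(\Omega)$-norm at the cost of a constant $L_{r_j}^{-1}$ that disappears after taking $n$-th roots. Nothing more is needed; in particular no ``building blocks'' adapted to $(K,\Omega)$ are ever constructed in $H^2(\Omega)$, and $C_\phi$ is never asked to act approximately diagonally on anything.

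By contrast, your argument hinges on objects $e_\alpha\in H^2(\Omega)$ that are never defined, and on the assertion that $\|e_\alpha\circ\phi\|_{H^2}/\|e_\alpha\|_{H^2}\asymp e^{-\ell(\alpha)}$ ``by non-degeneracy of $\phi$ at the $q_i$''. This step is not justified and, as stated, is not obviously true: Zakharyuta's extremal (common) bases are built for the \emph{restriction} pair $\big(H^\infty(\Omega),\mathcal C(K)\big)$, not for the composition operator $C_\phi$ on $H^2(\Omega)$, and the fact that $\phi$ is locally biholomorphic near each $q_i$ gives no mechanism by which $C_\phi$ would respect such a basis, even up to subexponential factors. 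You would additionally need near-orthogonality of the $e_\alpha$ in $H^2(\Omega)$ and of the $e_\alpha\circ\phi$ to make the Courant--Fischer min over $V$ tractable, and you do not address this. Finally, your first reduction is slightly off: the paper does not exhaust $\phi(\Omega)$ from inside by abstract regular compacts, but exhausts $\Omega$ by the $\overline{\Omega_{r_j}}$ (regular by strong regularity), takes their polynomial hulls (still in $\Omega$ because $\Omega$ is Runge and holomorphically convex), pushes forward by $\phi$, and invokes Ple\'sniak's theorem to get regularity of $K_j=\phi(\hat H_j)$; this is where the hypotheses are actually used.
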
 

Recall that if $\Omega$ is a domain in $\C^N$, a holomorphic function $\phi \colon \Omega \to \C^{M}$ ($M \leq N$) is \emph{non-degenerate} if there 
exists $a \in \Omega$ such that $\mathop{\mathrm {rank}}_a \phi = M$. Then $\phi (\Omega)$ has a non-empty interior.\par\smallskip

\begin{proof}
Let $(r_j)_{j \geq 1}$ be an increasing sequence of negative numbers tending to $0$. The set $H_j = \overline{\Omega_{r_j}}$ is a regular compact subset of 
$\Omega$, with non-void interior (hence non pluripolar). Let $\hat {H_j}$ its polynomial convex hull; this compact set is contained in $\Omega$, since $\Omega$ 
being a Runge domain, we have $\hat{H_j} = \tilde{H_j}$, and since $\tilde{H_j} \subseteq \Omega$, because $\Omega$ is holomorphically convex (being 
hyperconvex). Moreover $\hat {H_j}$ is regular since $V_E = V_{\hat E}$ for every compact subset of $\C^N$ (\cite[Corollary~4.14]{Siciak}).  \par
\smallskip

Let $K_j = \phi \big( \hat{H_j} \big)$ and let $G$ be a subspace of $H^2 (\Omega)$ with dimension $< n^N$.  \par\smallskip

The set $K_j$ is regular because of the following result (see \cite[Theorem~5.3.9]{Klimek}, \cite[top of page 40]{Plesniak}, 
\cite[Theorem~1.3]{Klimek-art2}, or \cite[Theorem~4]{Nguyen-Plesniak}, with a detailed proof).
\begin{theorem} [Ple{\'s}niak] 
Let $E$ be a compact, polynomially convex, regular and non pluripolar, subset of\/ $\C^N$. Then if $\Omega$ is a hyperconvex domain such that 
$E \subseteq \Omega$ and if $\phi \colon \Omega \to \C^N$ is a non-degenerate  holomorphic function, the set $\phi (E)$ is regular. 
\end{theorem}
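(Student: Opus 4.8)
The plan is to reduce everything to the behaviour of the $L$-extremal function of $\phi(E)$ and then to prove, by a single global pluripotential estimate, that its upper regularization vanishes identically on $\phi(E)$. Write $F=\phi(E)$. By the Siciak results recalled above, $F$ is regular as soon as $V_F^\ast(w)=0$ for every $w\in F$; since $V_F\ge 0$ with $V_F=0$ on $F$ and $V_F^\ast\ge V_F$, the whole point is to exclude an upward jump $V_F^\ast>0$ on $F$. Note first that $F$ is non-pluripolar: choosing $a_0\in E$ off the critical set $Z=\{z:\det\phi'(z)=0\}$ (possible because $E$ is non-pluripolar while the analytic set $Z$ is pluripolar, so $E\not\subseteq Z$), the map $\phi$ is a local biholomorphism near $a_0$ and therefore carries the non-pluripolar set $E\setminus Z$ to a non-pluripolar set; hence $V_F^\ast$ is a genuine element of the Lelong class. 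I would then fix a bounded hyperconvex neighbourhood $W$ of $E$ with $\overline W\subset\Omega$, e.g. a sublevel set $W=\Omega_c$ of a continuous {\it psh} exhaustion of $\Omega$ with $c$ close to $0$, so that $\phi$ is bounded on $\overline W$ and $E=\widehat E\subset W$.

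The key object is $h:=V_F^\ast\circ\phi$. Since $V_F^\ast\in{\cal PSH}(\C^N)$ and $\phi$ is holomorphic, $h\in{\cal PSH}(W)$; it is nonnegative, and it is bounded above on $\overline W$ by a constant $C_1$ because $V_F^\ast$ is u.s.c. and $\phi(\overline W)$ is compact. Next I would check that $h=0$ quasi-everywhere on $E$. Indeed $V_F=0$ on $F$ (every competitor in the definition of $V_F$ is $\le 0$ on $F$, and the zero function is admissible), and by the Bedford--Taylor theorem on negligible sets the set $N=\{V_F^\ast>V_F\}$ is pluripolar. Its preimage $\phi^{-1}(N)$ is again pluripolar: writing $N\subseteq\{\psi=-\infty\}$ with $\psi\in{\cal PSH}(\C^N)$ (Josefson's theorem), the function $\psi\circ\phi$ is {\it psh} and $\not\equiv-\infty$ — and here the non-degeneracy of $\phi$ is exactly what is needed, since $\phi(\Omega)$ has non-empty interior whereas a pluripolar set has none, so $\phi(\Omega)\not\subseteq\{\psi=-\infty\}$. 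Thus for $z\in E\setminus\phi^{-1}(N)$ we have $h(z)=V_F(\phi(z))=0$, i.e. $h=0$ off a pluripolar subset of $E$.

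Now I would feed $h$ into the relative extremal function of $E$ in $W$. The function $\tfrac1{C_1}h-1$ is {\it psh}, is $\le 0$ on $W$, and is $\le -1$ quasi-everywhere on $E$; because relaxing the defining constraint on a pluripolar set leaves the regularized relative extremal function unchanged, it is dominated by $u^\ast_{E,W}=u_{E,W}$, the last equality holding since $E$ is regular and hence $u_{E,W}$ is continuous (this uses the hypothesis together with \eqref{equiv u-V}, with $W$ a bounded open neighbourhood of $\widehat E=E$). This yields $h\le C_1\,(u_{E,W}+1)$ on all of $W$. Evaluating on $E$, where $u_{E,W}=-1$, gives $0\le V_F^\ast(\phi(z))=h(z)\le 0$, that is $V_F^\ast\equiv 0$ on $F=\phi(E)$; by the Siciak characterization recalled above this is precisely the regularity of $\phi(E)$.

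The main obstacle is conceptual rather than computational. One is tempted to argue locally, transporting the regularity of $E$ at a point $a$ to the regularity of $F$ at $\phi(a)$ through the local biholomorphism furnished by an invertible differential. This succeeds at the submersive points, but it collapses at the critical points of $\phi$, because a non-degenerate holomorphic map need not be open: for instance $\phi(z_1,z_2)=(z_1z_2,z_1)$ is non-degenerate yet omits every point $(w_1,0)$ with $w_1\ne 0$ from the image of any neighbourhood of the origin, so one cannot realize a full neighbourhood of a critical value as $\phi$ of points close to $E$, and the naive local transfer of regularity has no grip there. The device that circumvents this is exactly the passage to $h=V_F^\ast\circ\phi$ combined with the quasi-everywhere-to-everywhere upgrade through the relative extremal function: it controls $V_F^\ast$ simultaneously at all points of $\phi(E)$, regardless of whether they are regular or critical values, and the only features of $\phi$ it consumes are holomorphy and non-degeneracy (the latter solely to guarantee the pluripolarity of $\phi^{-1}(N)$).
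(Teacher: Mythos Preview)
The paper does not supply its own proof of this theorem: it is quoted as a known result of Ple\'sniak, with pointers to \cite[Theorem~5.3.9]{Klimek}, \cite{Plesniak}, \cite{Klimek-art2}, and \cite[Theorem~4]{Nguyen-Plesniak} for the argument. Your proposal is essentially the standard proof found in those references: pull $V_{F}^{\ast}$ back through $\phi$, use the Bedford--Taylor theorem on negligible sets to see that the pullback vanishes off a pluripolar subset of $E$, and then invoke the regularity of $E$ (through the relative extremal function and the invariance $u_{E\setminus P,W}^{\ast}=u_{E,W}^{\ast}$ for pluripolar $P$) to upgrade this to vanishing everywhere on $E$. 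So your approach is correct and matches the literature the paper cites.

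Two places deserve a touch more care. First, your non-pluripolarity argument for $F=\phi(E)$ is phrased loosely: the local biholomorphism exists only near the chosen point $a_0$, not on all of $E\setminus Z$. The clean route is the one you yourself use a few lines later for $\phi^{-1}(N)$: if $F\subseteq\{\psi=-\infty\}$ with $\psi\in{\cal PSH}(\C^N)$, then $E\subseteq\{\psi\circ\phi=-\infty\}$, and $\psi\circ\phi\not\equiv-\infty$ because $\phi(\Omega)$ has non-empty interior; this contradicts the non-pluripolarity of $E$. Second, your last sentence concludes regularity of $F$ from $V_F^{\ast}=0$ on $F$, whereas the definition asks for $V_F^{\ast}=0$ on $\hat F$. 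The missing line is short: once $V_F^{\ast}=0$ on $F$, the function $V_F^{\ast}$ (which lies in the Lelong class since $F$ is non-pluripolar) is itself an admissible competitor in the supremum defining $V_F$, so $V_F^{\ast}\le V_F\le V_F^{\ast}$ everywhere; thus $V_F$ is continuous, and Siciak's criterion recalled in the paper gives the regularity of $F$.
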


As before, the polynomial convex hull $\hat{K_j}$ of $K_j$ is contained in $\Omega$ and is also regular. Since $\phi$ is non-degenerate, $K_j$ has a non-void 
interior; hence $\hat{K_j}$ also. We can hence use Zakharyuta's formula (Theorem~\ref{zak-niv}) for the compact set $\hat{K_j}$. 

By restriction, the subspace $G$ can be viewed as a subspace of ${\cal C} (\hat{K_j})$. By Zakharyuta's formula, 
for  $0 < \eps < 1$, there is $n_\eps \geq 1$ such that, for $n \geq n_\eps$:
\begin{displaymath} 
d_{n^N} (A_{\hat{K_j}}) \geq \exp \bigg[ - (1 + \eps) \, (2 \pi) \, n \, \bigg( \frac{N!}{\capa (\hat{K_j})}\bigg)^{1/N} \bigg] \, \cdot
\end{displaymath} 
Hence,  there exists $f \in B_{H^\infty} \subseteq B_{H^2}$ such that, for all $g \in G$:
\begin{displaymath} 
\| g - f \|_{{\cal C} (\hat{K_j})} \geq (1 - \eps)  \exp \bigg[ - (1 + \eps) \, (2 \pi) \, n \, \bigg( \frac{N!}{\capa (\hat{K_j})}\bigg)^{1/N} \bigg] \, \cdot
\end{displaymath} 
Since $\hat{K_j} = \tilde{K_j}$ and, by definition $\| \, . \, \|_{{\cal C} (\tilde{K_j})} = \| \, . \, \|_{{\cal C} (K_j)}$, we have:
\begin{displaymath} 
\| g - f \|_{{\cal C} (\hat{K_j})} = \| g - f \|_{{\cal C} (K_j)} = \| C_\phi (g) - C_\phi (f)\|_{{\cal C}(\tilde{H_j})} \, .
\end{displaymath} 
Equivalently, since, by definition $\| \, . \, \|_{{\cal C} (\tilde{H_j})} = \| \, . \, \|_{{\cal C} (H_j)}$, we have, for all $g \in G$:
\begin{displaymath} 
\| C_\phi (g) - C_\phi (f)\|_{{\cal C}(H_j)} \geq 
(1 - \eps) \exp \bigg[ - (1 + \eps) \, (2 \pi) \, n \, \bigg( \frac{N!}{\capa (\hat{K_j})}\bigg)^{1/N} \bigg] \, \cdot
\end{displaymath} 
This implies, thanks to \eqref{sup}, that, for all $g \in G$:
\begin{displaymath} 
\| C_\phi (g) - C_\phi (f)\|_{H^2 (\Omega)} \geq 
L_{r_j}^{- 1}  (1 - \eps) \exp \bigg[ - (1 + \eps) \, (2 \pi) \, n \, \bigg( \frac{N!}{\capa (\hat{K_j})}\bigg)^{1/N} \bigg]  \, \cdot
\end{displaymath} 
Using \eqref{alter}, we get, since the subspace $G$ is arbitrary:
\begin{displaymath} 
a_{n^N} (C_\phi) \geq L_{r_j}^{ - 1} (1 - \eps) \exp \bigg[ - (1 + \eps) \, (2 \pi) \, n \, \bigg( \frac{N!}{\capa (\hat{K_j})}\bigg)^{1/N} \bigg]  \, \cdot
\end{displaymath} 
Taking the $n$th-roots and passing to the limit, we obtain:
\begin{displaymath} 
 \beta_N^- (C_\phi) \geq \exp \bigg[ - (1 + \eps) \, (2 \pi) \, \bigg( \frac{N!}{\capa (\hat{K_j})}\bigg)^{1/N} \bigg]  \, \cdot
\end{displaymath} 
and then, letting $\eps$ go to $0$:
\begin{displaymath} 
\beta_N^- (C_\phi) \geq \exp \bigg[ - (2 \pi) \, \bigg( \frac{N!}{\capa (\hat{K_j})}\bigg)^{1/N} \bigg] = \Gamma_N (\hat{K_j}) \, .
\end{displaymath} 
Now, the sequence $(\hat{K_j})_{j \geq 1}$ is increasing and $\bigcup_{j \geq 1} \hat{K_j} \supseteq \phi (\Omega)$; hence, by 
\cite[Theorem~8.2~(8.3)]{BT},  we have 
$\capa (\hat{K_j}) \converge_{j \to \infty} \capa \big( \bigcup_{j \geq 1} \hat{K_j} \big) \geq \capa [\phi (\Omega)]$, so:
\begin{displaymath} 
\beta_N^- (C_\phi) \geq \Gamma_N [\phi (\Omega)] \, ,
\end{displaymath} 
and the proof of Theorem~\ref{th mino} is finished.
\end{proof} 
%

\subsection{Majorization}

For the majorization, we assume different hypotheses on the domain $\Omega$. Nevertheless these assumptions agree with that of Theorem~\ref{th mino} when 
$\Omega$ is a bounded symmetric domain.

\subsubsection{Preliminaries} 

Recall that a domain $\Omega$ of $\C^N$ is a \emph{Reinhardt domain} (resp. \emph{complete Reinhardt domain}) if $z = (z_1, \ldots, z_N) \in \Omega$ 
implies that $( \zeta_1 z_1, \ldots, \zeta_N z_N) \in \Omega$ for all complex numbers $\zeta_1, \ldots, \zeta_N$ of modulus $1$ (resp. of modulus $\leq 1$). 
A complete bounded Reinhardt domain is hyperconvex if and only if $\log j_\Omega$ is 
{\it psh} and continuous in $\C^N \setminus \{0\}$, where $j_\Omega$ is the Minkowski functional of $\Omega$ (see 
\cite[Exercise following Proposition~3.3.3]{Blocki-Jag}). In general, the Minkowski functional $j_\Omega$ of a bounded complete Reinhardt domain $\Omega$ 
is {\it usc} and $\log j_\Omega$ is {\it psh} if and only if $\Omega$ is pseudoconvex (\cite[Theorem~1.4.8]{Blocki-Jag}). Other conditions for a bounded 
complete Reinhardt domain to being hyperconvex can found in \cite[Theorem~3.10]{Korean}. 

For a bounded hyperconvex and complete Reinhardt domain $\Omega$, its pluricomplex Green function with pole $0$ is $g_\Omega (z) = \log j_\Omega (z)$, 
where $j_\Omega$ is the Minkowski functional of $\Omega$ (\cite[Proposition~3.3.2]{Blocki-Jag}), and $S_\Omega (r) = \e^r \partial \Omega$. Since 
$\partial \Omega$ is in particular invariant by the pluri-rotations $z = (z_1, \ldots, z_N) \mapsto (\e^{i \theta_1} z_1, \ldots, \e^{i \theta_N} z_N)$, 
with $\theta_1, \ldots, \theta_N \in \R$, the harmonic measure $\tilde \mu_\Omega$ at $0$ (see the proof of Proposition~\ref{Hardy spaces}) is also invariant 
by the pluri-rotations (note that it is supported by the Shilov boundary of $\overline \Omega$: see \cite[very end of the paper]{Tiba}). We have, as in the proof 
of Proposition~\ref{Hardy spaces}, for $f \in H^2 (\Omega)$:
\begin{displaymath} 
\sup_{0 < s < 1} \int_{\partial \Omega} | f (s z) |^2 \, d \tilde \mu_\Omega (z) = \| f \|_{H^2 (\Omega)}^2 < \infty \, .
\end{displaymath} 
Since $\tilde \mu_\Omega$ is in particular invariant by the rotations $z \mapsto \e^{i \theta} z$, $\theta \in \R$, there exists, by \cite[Theorem~3]{Bochner},  
a function $f^\ast \in L^2 (\partial \Omega, \tilde\mu_\Omega)$ such that:
\begin{displaymath} 
\int_{\partial \Omega} |f (s z) - f^\ast (z) |^2 \, d\tilde\mu_\Omega (z) \converge_{s \to 1} 0 \, .
\end{displaymath} 
It ensues that the map $f \in H^2 (\Omega) \mapsto f^\ast \in L^2 (\partial \Omega, \tilde\mu_\Omega)$ is an isometric embedding (in fact, $f^\ast$ is the 
radial limit of $f$: see \cite[Lemma~2]{Hahn}). Therefore, we can consider $H^2 (\Omega)$ as a complemented subspace of 
$L^2 (\partial \Omega, \tilde\mu_\Omega)$, and we call $P$ the orthogonal projection of $L^2 (\partial \Omega, \tilde\mu_\Omega)$ onto $H^2 (\Omega)$. 
\smallskip

Every holomorphic function $f$ in a Reinhardt domain $\Omega$ containing $0$ (in particular if $\Omega$ is a complete Reinhardt domain) has a power series 
expansion about $0$:
\begin{displaymath} 
f (z) = \sum_{\alpha} b_\alpha z^\alpha 
\end{displaymath} 
which converges normally on compact subsets of $\Omega$ (\cite[Proposition~2.3.14]{Krantz}). Recall that if $z = (z_1, \ldots, z_N)$ and 
$\alpha = (\alpha_1, \ldots, \alpha_N)$, then $z^\alpha = z_1^{\alpha_1} \cdots z_N^{\alpha_N}$, $|\alpha| = \alpha_1 + \cdots + \alpha_N$, and 
$\alpha ! = \alpha_1! \cdots \alpha_N!$.

We have:
\goodbreak
\begin{proposition} \label{norme monomes}
Let $\Omega$ be a bounded hyperconvex and complete Reinhardt domain, and set $e_\alpha (z) = z^\alpha$. Then the system $(e_\alpha)_\alpha$ is orthogonal 
in $H^2 (\Omega)$.
\end{proposition}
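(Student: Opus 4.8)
The plan is to use the isometric embedding $f \mapsto f^\ast$ of $H^2 (\Omega)$ into $L^2 (\partial \Omega, \tilde\mu_\Omega)$ established just above, together with the pluri-rotation invariance of the harmonic measure $\tilde\mu_\Omega$. First I would observe that each $e_\alpha$ is a polynomial, hence extends continuously to $\overline\Omega$, so its boundary value function is simply the restriction $e_\alpha^\ast (z) = z^\alpha$ on $\partial \Omega$ (its radial limit). Consequently, for multi-indices $\alpha \neq \beta$,
\[
\langle e_\alpha, e_\beta \rangle_{H^2 (\Omega)} = \int_{\partial \Omega} z^\alpha \, \overline{z^\beta} \, d\tilde\mu_\Omega (z) \, .
\]

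Next I would exploit the fact, recorded in the preliminaries above, that $\partial \Omega$, and hence $\tilde\mu_\Omega$, is invariant under every pluri-rotation $R_\theta \colon (z_1, \ldots, z_N) \mapsto (\e^{i \theta_1} z_1, \ldots, \e^{i \theta_N} z_N)$ with $\theta = (\theta_1, \ldots, \theta_N) \in \R^N$. Changing variables $z \mapsto R_\theta (z)$ in the integral above, and using that $(R_\theta z)^\alpha = \e^{i (\theta_1 \alpha_1 + \cdots + \theta_N \alpha_N)} z^\alpha$, gives
\[
\int_{\partial \Omega} z^\alpha \, \overline{z^\beta} \, d\tilde\mu_\Omega (z) = \e^{\, i \sum_{j=1}^N \theta_j (\alpha_j - \beta_j)} \int_{\partial \Omega} z^\alpha \, \overline{z^\beta} \, d\tilde\mu_\Omega (z)
\]
for every $\theta \in \R^N$.

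Finally, since $\alpha \neq \beta$ there is an index $j_0$ with $\alpha_{j_0} \neq \beta_{j_0}$; choosing $\theta_{j_0}$ so that $\theta_{j_0} (\alpha_{j_0} - \beta_{j_0}) \notin 2 \pi \Z$ and $\theta_j = 0$ for $j \neq j_0$, the scalar factor in front of the integral is different from $1$, which forces $\int_{\partial \Omega} z^\alpha \, \overline{z^\beta} \, d\tilde\mu_\Omega (z) = 0$, i.e. $\langle e_\alpha, e_\beta \rangle_{H^2 (\Omega)} = 0$. There is essentially no hard step here; the only point requiring care is to justify that the change of variables is legitimate, namely that $\tilde\mu_\Omega \circ R_\theta^{-1} = \tilde\mu_\Omega$ as measures on $\partial \Omega$, and this is precisely the pluri-rotation invariance already established. (Alternatively, one could integrate the displayed identity over $\theta \in [0, 2\pi]^N$ and invoke Fubini to reach the same conclusion.)
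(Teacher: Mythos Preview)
Your proof is correct and follows essentially the same idea as the paper's: both exploit pluri-rotation invariance to show that $\int z^\alpha \overline{z^\beta}\,d\mu = 0$ via the change of variables $z \mapsto R_\theta z$. The only cosmetic differences are that the paper applies this argument to each Demailly--Monge--Amp\`ere measure $\mu_r$ on the level set $S(r)$ and then lets $r \to 0$, whereas you work directly with the boundary measure $\tilde\mu_\Omega$ via the isometric embedding $f \mapsto f^\ast$ already established; and the paper chooses $\theta$ with $1,\theta_1/2\pi,\ldots,\theta_N/2\pi$ rationally independent, while your simpler choice of a single nonzero $\theta_{j_0}$ suffices.
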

\begin{proof}
We use the fact that the level sets $S (r)$ and the Demailly-Monge-Amp\`ere measures $\mu_r = \big(dd^c (g_\Omega)_r \big)^N$ are 
pluri-rotation invariant. For $\alpha \neq \beta$, we choose $\theta_1, \ldots, \theta_N \in \R$ such that $1, (\theta_1/ 2 \pi), \ldots, (\theta_N / 2 \pi)$ are 
rationally independent. Then $\exp \big[ i \big( \sum_{j = 1}^N (\alpha_j - \beta_j) \theta_j \big) \big] \neq 1$. Hence, as in \cite[p.~78]{Hua}, we have, 
making the change of variables $z = (\e^{i \theta_1} w_1, \ldots, \e^{i \theta_N} w_N)$:
\begin{displaymath} 
\int_{S (r)} z^\alpha \overline{z^\beta} \, d\mu_r (z) 
=  \exp \bigg[ i \bigg( \sum_{j = 1}^N (\alpha_j - \beta_j) \theta_j \bigg) \bigg]\int_{S (r)} w^\alpha \overline{w^\beta} \, d\mu_r (w) \, ,
\end{displaymath} 
which implies that:
\begin{displaymath} 
\int_{S (r)} z^\alpha \overline{z^\beta} \, d\mu_r (z) = 0 \, ,
\end{displaymath} 
and hence:
\begin{displaymath} 
(z^\alpha \mid z^\beta) := \lim_{r \to 0} \int_{S (r)} z^\alpha \overline{z^\beta} \, d\mu_r (z) = 0 \, . \qedhere
\end{displaymath} 
\end{proof}

For the polydisk, we have $\|e_\alpha \|_{H^2 (\D^N)} = 1$, and for the ball (see \cite[Proposition~1.4.9]{Rudin}):
\begin{displaymath} 
\| e_\alpha \|_{H^2 (\B_N)}^2 = \frac{(N - 1)! \, \alpha !}{(N - 1 + |\alpha|)!} \, \cdot
\end{displaymath} 
\begin{definition}
We say that $\Omega$ is a \emph{good} complete Reinhardt domain if, for some positive constant $C_N$ and some positive integer $c$, we have, 
for all $p \geq 0$:
\begin{displaymath} 
\sum_{|\alpha| = p} \frac{|z^\alpha|^2}{\| e_\alpha \|_{H^2 (\Omega)}^2} \leq C_N \, p^{c N} [j_\Omega (z)]^{2 p} \, ,
\end{displaymath} 
where $j_\Omega$ is the Minkowski functional of $\Omega$.
\end{definition}

\noindent{\bf Examples} \par

{\bf 1.} The polydisk $\D^N$ is a good Reinhardt domain because $\| e_\alpha \|_{H^2 (\D^N)} = 1$, $|z^\alpha| \leq \| z \|_\infty^{|\alpha|}$, 
and the number of indices $\alpha$ such that $|\alpha| = p$ is $\binom{N - 1 + p}{p} \leq C_N p^N$ (see \cite[p.~498]{Li-Queff} or 
\cite[pp.~213--214]{Li-Queff-Cambridge}).

{\bf 2.} The ball $\B_N$ is a good Reinhardt domain. In fact, observe that:
\begin{displaymath} 
\frac{(N - 1 + p)!}{(N - 1)!} = p! \, \frac{(p + 1) (p +2) \cdots (p+ N - 1)}{1 \times 2 \times \cdots \times (N - 1)} \leq p! \, (p + 1)^{N - 1} 
\leq p! \, (p + 1)^N \, ;
\end{displaymath} 
hence:
\begin{align*} 
\sum_{|\alpha| = p} \frac{|z^\alpha|^2\ }{\| e_\alpha \|_{H^2 (\B_N)}^2}
& = \sum_{|\alpha| = p} |z^\alpha|^2 \, \frac{(N - 1 + |\alpha|)!} {(N - 1)! \, \alpha !} \\
& \leq (p + 1)^N \sum_{|\alpha| = p} \frac{|\alpha|!}{\alpha !} \, |z_1|^{2 \alpha_1} \cdots |z_N|^{2 \alpha_N}  \\
& = (p + 1)^N (|z_1|^2 + \cdots + |z_N|^2)^p \, ,
\end{align*} 
by the  multinomial formula, so:
\begin{displaymath}
\sum_{|\alpha| = p} \frac{|z^\alpha|^2 \ }{\| e_\alpha \|_{H^2 (\B_N)}^2} \leq (p + 1)^N \| z \|_2^{2 p} \leq 2^N p^N \| z \|_2^{2 p} \, .
\end{displaymath}

{\bf 3.} More generally, if $\Omega = \B_{l_1} \times \cdots \times \B_{l_m}$, $l_1 + \cdots + l_m =N$, is a product of balls, we have, writing 
$\alpha = (\beta_1, \ldots, \beta_m)$, where each $\beta_j$ is an $l_j$-tuple:
\begin{align*} 
\| e_\alpha\|^2_{H^2 (\Omega)} 
& = \int_{{\mathbb S}_{l_1} \times \cdots \times {\mathbb S}_{l_2}} 
|u_1^{\beta_1}|^2 \ldots |u_m^{\beta_m}|^2 \, d\sigma_{l_1} (u_1) \ldots  d\sigma_{l_m} (u_m) \\
& \qquad \quad = \prod_{j = 1}^m \frac{(l_j - 1)! \, \beta_j !}{(l_j - 1 +|\beta_j|)!} \, \raise 1 pt \hbox{,}
\end{align*} 
and, writing $z = (z_1, \ldots, z_m)$, with $z_j \in \B_{l_j}$:
\begin{align*}
\sum_{|\alpha| = p} \frac{|z^\alpha|^2\ }{\| e_\alpha \|_{H^2 (\Omega)}^2} 
& \leq \sum_{p_1 + \cdots + p_m = p} \prod_{j = 1}^m (p_j + 1)^{l_j} \| z_j \|_2^{ 2 p_j} \\
& \leq C_m p^m \, (p + 1)^{l_1 + \cdots + l_m} [j_\Omega (z)]^{2 (p_1 + \cdots + p_m)} \, ,
\end{align*} 
since $j_\Omega (z) = \max \{ \| z_1 \|_2, \ldots, \| z_m\|_2\}$. Hence:
\begin{displaymath}
\sum_{|\alpha| = p} \frac{|z^\alpha|^2\ }{\| e_\alpha \|_{H^2 (\Omega)}^2} 
\leq C_N p^{2 N} [j_\Omega (z)]^{2 p} \, .
\end{displaymath}
%
 
\subsubsection{The result}
\begin{theorem} \label{th majo}
Let $\Omega$ be a bounded hyperconvex domain which is a good complete Reinhardt domain in $\C^N$, and let $\phi \colon \Omega \to \Omega$ be an 
analytic function such that $\overline{\phi (\Omega)} \subseteq \Omega$. Then, for every compact subset $K \supseteq \phi (\Omega)$ of $\Omega$ with 
non void interior, we have:
\begin{equation} 
\beta_N^+ (C_\phi) \leq \Gamma_N (K) \, .
\end{equation} 
In particular, if $\phi$ is moreover non-degenerate, we have:
\begin{equation} 
\beta_N^+ (C_\phi) \leq \Gamma_N \big[ \overline{\phi (\Omega)} \big] \, .
\end{equation} 
\end{theorem}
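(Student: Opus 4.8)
The plan is to establish the first inequality $\beta_N^+(C_\phi) \leq \Gamma_N(K)$ for an arbitrary compact $K \supseteq \phi(\Omega)$ with non-void interior, and then deduce the second by letting $K$ shrink to $\overline{\phi(\Omega)}$ and invoking continuity of the capacity along decreasing sequences of compact sets. For the main inequality, since $a_{n^N}(C_\phi) = d_{n^N}(C_\phi)$ (the Kolmogorov numbers, by \eqref{a=d}), it suffices to build, for each large $n$, a subspace of $H^2(\Omega)$ of dimension roughly $n^N$ that approximates $C_\phi(B_{H^2(\Omega)})$ well in $H^2(\Omega)$-norm. The natural candidate is the span of the monomials $e_\alpha$ with $|\alpha| \leq A_n$ for a suitable threshold $A_n$; by Proposition~\ref{norme monomes} these are orthogonal, the number of such $\alpha$ is $\sim A_n^N / N!$, and the approximation error is controlled by the tail $\sum_{|\alpha| > A_n}$ of the power series of $f \circ \phi$ for $f \in B_{H^2(\Omega)}$.

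The key estimate is a uniform bound on that tail. First I would note that since $\overline{\phi(\Omega)} \subseteq K \subseteq \{z : j_\Omega(z) \leq \rho_0\}$ for some $\rho_0 < 1$ — more precisely, I would choose $K$-adapted coordinates and use that $V_K$ (the $L$-extremal function of the polynomially convex, regular hull $\hat K$) governs the growth of polynomials: for $\zeta \in \phi(\Omega)$ and any homogeneous piece, $|P(\zeta)| \leq \|P\|_K$, while off $\hat K$ one has $|P(z)| \leq \|P\|_K \exp(\deg P \cdot V_K(z))$ by the Bernstein–Walsh inequality. Applying this to the monomials $z^\alpha$, restricted to $K$, and using that $f \in B_{H^2(\Omega)}$ has Taylor coefficients $b_\beta$ with $\sum_\beta |b_\beta|^2 \|e_\beta\|_{H^2(\Omega)}^2 \leq 1$, together with the ``good Reinhardt'' hypothesis $\sum_{|\alpha|=p} |z^\alpha|^2 / \|e_\alpha\|^2_{H^2(\Omega)} \leq C_N p^{cN} j_\Omega(z)^{2p}$, one gets that the degree-$p$ homogeneous part of $f \circ \phi$ has $H^2(\Omega)$-norm at most $C_N^{1/2} p^{cN/2}$ times $\sup_{z \in K}(\text{something})$ — but the crucial gain comes from the Bernstein–Walsh / Zakharyuta-type bound converting the capacity $\capa(K)$ into an exponential rate. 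Concretely, the contribution of degree-$p$ terms decays like $\Gamma_N(K)^{p(1-\eps)}$ once one uses Zakharyuta's asymptotic $-\log d_p(A_K) \sim (N!/\tau_N(K))^{1/N} p^{1/N}$ applied in the ``dual'' direction: approximating $f|_K$ by polynomials of controlled degree forces the error to be $\exp(-(2\pi)(N!/\capa(K))^{1/N} n + o(n))$ when $\dim < n^N$.

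So the precise mechanism I would use: fix $\eps > 0$; choose $A_n = (1+\eps)\,(2\pi)\,n\,(N!/\capa(\hat K))^{1/N}$ interpreted as a degree bound via the Bernstein–Walsh inequality, so that the number of monomials of degree $\leq$ the corresponding degree is $\leq n^N$ for $n$ large (using $\nu_A \sim A^N/(N!\,\sigma_1\cdots\sigma_N)$ from \eqref{mata} together with the relation between $\capa(\hat K)$ and the relevant product of ``sizes'' — this is exactly where $\Gamma_N(K)$ enters). Let $G_n = \mathrm{span}\{e_\alpha : \deg \leq \text{that bound}\}$, project $C_\phi f$ onto $G_n$, and bound the residual $\|C_\phi f - P_{G_n} C_\phi f\|_{H^2(\Omega)}$ by summing the good-Reinhardt tail estimate: $\|\sum_{p > A_n} (f\circ\phi)_p\|^2 \leq \sum_{p>A_n} C_N p^{cN} \rho_0^{2p} \cdot (\text{coefficient bound}) \leq C \exp(-2(1-\eps')(2\pi)n(N!/\capa(\hat K))^{1/N})$. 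Taking $n$-th roots, $\limsup [a_{n^N}(C_\phi)]^{1/n} \leq \exp(-(1-\eps')(2\pi)(N!/\capa(\hat K))^{1/N}) \to \Gamma_N(\hat K) = \Gamma_N(K)$ as $\eps' \to 0$ (using $\capa(\hat K) = \capa(K)$, which holds since $V_{\hat K} = V_K$). For the ``in particular'' part, take a decreasing sequence of such $K = K_j \downarrow \overline{\phi(\Omega)}$ and use $\capa(K_j) \to \capa(\overline{\phi(\Omega)})$ by \cite[Theorem~8.2]{BT}, giving $\Gamma_N(K_j) \to \Gamma_N(\overline{\phi(\Omega)})$.

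\textbf{Main obstacle.} The hard part is the uniform tail estimate: translating ``$f \in B_{H^2(\Omega)}$, $\overline{\phi(\Omega)} \subseteq K$'' into a quantitative, degree-by-degree decay of the Taylor coefficients of $f \circ \phi$ at the rate dictated by $\capa(K)$. The composition $f \circ \phi$ destroys the clean monomial structure, so one cannot just read off coefficients; instead one must combine (i) the sup-norm control $\|f\|_{H^\infty(K')} \lesssim \|f\|_{H^2(\Omega)}$ on a slightly larger compact $K' \Subset \Omega$ (from the reproducing kernel bound \eqref{sup}) with (ii) Cauchy estimates on $\phi(\Omega) \Subset \mathrm{int}(K)$ to bound the homogeneous parts of $f\circ\phi$ by $\|f\|_{H^\infty(K')}\,\rho_0^{p}$, and then (iii) re-expand in $H^2(\Omega)$-norm via the good-Reinhardt inequality — checking that the polynomial factors $p^{cN}$ are harmless after taking $n$-th roots, and that the bookkeeping of $\capa(K)$ versus the counting function $\nu_A$ matches the exponent in $\Gamma_N$. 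Getting the constant $(2\pi)(N!/\capa(K))^{1/N}$ exactly right, rather than up to a multiplicative fudge, is the delicate accounting step.
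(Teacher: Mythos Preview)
Your proposal has a real gap: the approximating subspace you choose, $G_n = \mathrm{span}\{e_\alpha : |\alpha| \le A_n\}$, consists of polynomials in the \emph{domain} variable $z$, and the tail bound you obtain for $C_\phi f - P_{G_n} C_\phi f$ is governed only by the crude geometric quantity $\rho_0 = \sup_{w \in K} j_\Omega(w)$. Summing $\sum_{p > A_n} p^{cN}\rho_0^{2p}$ gives a residual of order $\rho_0^{A_n}$, and with $\dim G_n \sim A_n^N/N!$ forced to be $\le n^N$, you get at best
\[
\beta_N^+(C_\phi) \le \rho_0^{(N!)^{1/N}},
\]
which has nothing to do with $\capa(K)$. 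The jump you write, ``$\sum_{p>A_n} C_N p^{cN}\rho_0^{2p}\cdot(\text{coeff. bound}) \le C\exp\bigl(-2(1-\eps')(2\pi)n(N!/\capa(\hat K))^{1/N}\bigr)$'', is exactly where the argument breaks: nothing in your setup converts $\rho_0$ into the capacity rate, and the references to Bernstein--Walsh and \eqref{mata} do not do that job (Bernstein--Walsh controls polynomials off $K$, not tails of compositions, and \eqref{mata} is a counting estimate with no capacity content).

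The paper's mechanism is different and is the missing idea. One splits $f = g + (f - g)$ with $g = \sum_{|\alpha|\le l} b_\alpha z^\alpha$ a polynomial of degree $l$ in the \emph{range} variable; the good-Reinhardt hypothesis gives $\|g\|_\infty \lesssim l^{(cN+1)/2}$. Then one invokes Proposition~\ref{Zakha-upper} (Zakharyuta's upper bound for $d_{n^N}(A_K)$) to obtain an abstract $(n^N-1)$-dimensional subspace $F \subset \mathcal{C}(K)$ and $h \in F$ with $\|g - h\|_{\mathcal{C}(K)} \le (1+\eps)^n \Lambda_N^n \|g\|_\infty$, where $\Lambda_N = \Gamma_N(K)$ up to $\eps$. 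Since $\phi(\Omega) \subseteq K$, this pulls back: $\|g\circ\phi - h\circ\phi\|_{H^2} \le \|g - h\|_{\mathcal{C}(K)}$, and one projects $\{v\circ\phi : v \in F\}$ orthogonally into $H^2(\Omega)$ to get the approximating subspace $E$. The tail $f - g$ is handled on $K$ by the crude $\rho_0$ bound (via Lemma~\ref{lemme utile}), but this is a secondary term, killed by taking $l \sim n\log n$; the capacity rate comes entirely from the Zakharyuta approximation of $g$ on $K$, not from a Taylor tail. Finally, the ``in particular'' clause needs no limiting procedure: when $\phi$ is non-degenerate, $\overline{\phi(\Omega)}$ itself has non-void interior and serves directly as $K$.
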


The last assertion holds because $\phi (\Omega)$ is open if $\phi$ is non-degenerate.
\begin{corollary}
Let $\Omega$ be a good complete bounded symmetric domain in $\C^N$, and $\phi \colon \Omega \to \Omega$ a non-degenerate analytic map such that 
$\overline{\phi (\Omega)} \subseteq \Omega$. Then:
\begin{displaymath} 
\Gamma_N \big[ \phi (\Omega) \big] \leq \beta_N^- (C_\phi) \leq \beta_N^+ (C_\phi) \leq \Gamma_N \big[ \overline{\phi (\Omega)} \big] \, .
\end{displaymath} 
\end{corollary}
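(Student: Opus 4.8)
The plan is simply to concatenate Theorem~\ref{th mino} and Theorem~\ref{th majo}, after checking that a good complete bounded symmetric domain $\Omega$ meets the (slightly different) hypotheses of each. The middle inequality $\beta_N^-(C_\phi)\le\beta_N^+(C_\phi)$ needs no work: by \eqref{liminf} and \eqref{limsup} both quantities are the $\liminf$ and the $\limsup$ of one and the same sequence $\big([a_{n^N}(C_\phi)]^{1/n}\big)_{n\ge1}$, so the inequality is automatic. Thus everything reduces to producing the two outer bounds.

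For the left bound I would verify the hypotheses of Theorem~\ref{th mino}. Since $\Omega$ is a bounded symmetric domain, its gauge is a norm $\|\cdot\|$ on $\C^N$ with open unit ball $\Omega$, so $\rho(z)=\|z\|-1$ is a continuous {\it psh} exhaustion function and $\Omega$ is bounded hyperconvex. Taking instead the exhaustion function $\rho(z)=\log\|z\|$, the sub-level sets $\Omega_c$ are the open balls $B(0,\e^c)$, whose closures are balls of the norm $\|\cdot\|$ and hence regular (their $L$-extremal function being $\log^+(\|z-a\|/r)$, as recalled in the excerpt); so $\Omega$ is strongly regular. Being circled, $\Omega$ is also a Runge domain. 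Finally $\phi$ is non-degenerate and $\overline{\phi(\Omega)}\subseteq\Omega$ by hypothesis, so Theorem~\ref{th mino} applies and yields $\Gamma_N[\phi(\Omega)]\le\beta_N^-(C_\phi)$.

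For the right bound I would invoke Theorem~\ref{th majo}. Its domain hypothesis is exactly that $\Omega$ be a bounded hyperconvex domain which is a good complete Reinhardt domain, which is what ``good complete bounded symmetric domain'' provides (hyperconvexity being automatic as above, the ``good complete Reinhardt'' part being assumed). Since $\phi$ is non-degenerate, $\phi(\Omega)$ has non-void interior, hence so does $\overline{\phi(\Omega)}$, which is moreover a compact subset of $\Omega$ because $\overline{\phi(\Omega)}\subseteq\Omega$; thus the ``in particular'' clause of Theorem~\ref{th majo} gives $\beta_N^+(C_\phi)\le\Gamma_N\big[\overline{\phi(\Omega)}\big]$. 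Chaining the three inequalities finishes the proof.

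There is no genuine analytic obstacle here: the only point requiring attention is the bookkeeping observation that a bounded symmetric domain is simultaneously hyperconvex, strongly regular, and Runge, so that the hypotheses of the minoration theorem hold for free, while the word ``good complete'' supplies precisely the Reinhardt-side requirement of the majorization theorem. The ball $\B_N$, the polydisk $\D^N$, and finite products $\B_{l_1}\times\cdots\times\B_{l_m}$ were shown above to be good complete Reinhardt domains, so the corollary applies in all those cases.
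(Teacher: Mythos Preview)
Your proposal is correct and matches the paper's approach: the corollary is stated immediately after Theorem~\ref{th majo} with no proof, so the intended argument is exactly the combination of Theorems~\ref{th mino} and~\ref{th majo} that you spell out. Your verification that a bounded symmetric domain is automatically hyperconvex, strongly regular, and Runge is precisely what the paper has already recorded in the text preceding those theorems, and your reading of ``good complete'' as supplying the good complete Reinhardt hypothesis for Theorem~\ref{th majo} is the intended one.
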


For the proof of Theorem~\ref{th majo}, we will use the following result (\cite[Proposition~6.1]{ZAK}), which do not need any regularity condition on the 
compact set (because it may be written as a decreasing sequence of regular compact sets).
\begin{proposition} [Zakharyuta] \label{Zakha-upper}
If $K$ is any compact subset of a bounded hyperconvex domain $\Omega$ of $\C^N$ with non-empty interior, we have:
\begin{displaymath} 
\limsup_{n \to \infty} \frac{\log d_n (A_K)}{n^{1/N}} \leq - \bigg( \frac{N!}{\tau_N (K)} \bigg)^{1/N} \, .
\end{displaymath} 
\end{proposition}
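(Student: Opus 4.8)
The plan is to derive this ``regularity-free'' upper bound from the Zakharyuta--Nivoche asymptotics of Theorem~\ref{zak-niv}, which \emph{does} assume regularity, by squeezing the arbitrary compact set $K$ from outside by a decreasing sequence of regular compact sets and combining the monotonicity of the Kolmogorov widths under inclusion with the continuity of the Monge--Amp\`ere capacity along decreasing sequences of compacta.

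First I would record the \emph{monotonicity} of $d_n(A_K)$ in $K$: if $K \subseteq K'$ are compact subsets of $\Omega$ with non-empty interior, then $d_n(A_K) \le d_n(A_{K'})$. Indeed, given any subspace $L' \subseteq \mathcal{C}(K')$ with $\dim L' < n$, its restriction $L'|_K$ is a subspace of $\mathcal{C}(K)$ of dimension $< n$, and for every $g \in B_{H^\infty(\Omega)}$ one has
\begin{displaymath}
{\rm dist}_{\mathcal{C}(K)}\big(g|_K, L'|_K\big) \le {\rm dist}_{\mathcal{C}(K')}\big(g|_{K'}, L'\big) \, ,
\end{displaymath}
because $\| \, \cdot \, \|_{\mathcal{C}(K)} \le \| \, \cdot \, \|_{\mathcal{C}(K')}$ as $K \subseteq K'$. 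Taking the supremum over $g \in B_{H^\infty(\Omega)}$ and then the infimum over $L'$ gives the claim. Consequently, for any compact $K' \supseteq K$,
\begin{displaymath}
\limsup_{n \to \infty} \frac{\log d_n(A_K)}{n^{1/N}} \le \limsup_{n \to \infty} \frac{\log d_n(A_{K'})}{n^{1/N}} \, .
\end{displaymath}

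Second, I would produce a decreasing sequence $(K_m)_{m \ge 1}$ of compact subsets of $\Omega$, each regular, holomorphically convex in $\Omega$ and with non-empty interior, with $K \subseteq K_m$ and $\capa(K_m) \to \capa(K)$. Non-empty interior is automatic since $K_m \supseteq K$; a natural candidate is a super-level set $\{V_K^\ast \le 1/m\}$ of the $L$-extremal function (intrinsically, an interior sub-level set of $u_{K, \Omega}^\ast$), which is $L$-regular at positive levels and decreases to the $L$-regular hull of $K$, holomorphic convexity being secured in the Runge domain $\Omega$ by passing to the regularity-preserving holomorphic hull (Siciak's identity $V_E = V_{\hat E}$). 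Since the Monge--Amp\`ere capacity is invariant under this hull and continuous along decreasing sequences of compacta (\cite[Theorem~8.2]{BT}), one obtains $\capa(K_m) \to \capa(K)$, that is $\tau_N(K_m) \to \tau_N(K)$. Applying Theorem~\ref{zak-niv} to each $K_m$ gives
\begin{displaymath}
\limsup_{n \to \infty} \frac{\log d_n(A_{K_m})}{n^{1/N}} = - \bigg( \frac{N!}{\tau_N(K_m)} \bigg)^{1/N} \, ,
\end{displaymath}
so that, combining with the monotonicity of the first step and letting $m \to \infty$,
\begin{displaymath}
\limsup_{n \to \infty} \frac{\log d_n(A_K)}{n^{1/N}} \le \lim_{m \to \infty} \Bigg[ - \bigg( \frac{N!}{\tau_N(K_m)} \bigg)^{1/N} \Bigg] = - \bigg( \frac{N!}{\tau_N(K)} \bigg)^{1/N} \, ,
\end{displaymath}
which is the desired inequality.

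The main obstacle is this second step: exhibiting regular compact neighbourhoods of $K$ that shrink appropriately while remaining holomorphically convex, and simultaneously controlling their capacity. Regularity is the delicate ingredient, since an arbitrary compact neighbourhood of $K$ need not be regular; the standard remedy I would follow is to use level sets of extremal functions rather than crude neighbourhoods, exploiting both that such level sets are regular and that the capacity does not distinguish $K$ from its hull. Everything else—the monotonicity of the widths under inclusion and the Choquet-type continuity of $\capa$ along decreasing compacta—is soft.
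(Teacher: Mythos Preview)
Your approach is exactly the one the paper indicates: the paper does not give a detailed proof but cites \cite[Proposition~6.1]{ZAK} and explains in one parenthetical clause that regularity is unnecessary ``because it may be written as a decreasing sequence of regular compact sets.'' Your two-step scheme (monotonicity of widths under inclusion, then approximation from outside by regular compacta whose capacity converges) is precisely this.

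One point deserves correction. In your second step you appeal to the global extremal function $V_K^\ast$, polynomial hulls, and the hypothesis that $\Omega$ be a Runge domain; but Proposition~\ref{Zakha-upper} is stated for an \emph{arbitrary} bounded hyperconvex $\Omega$, with no Runge assumption. The fix is to work intrinsically with the relative extremal function: set
\begin{displaymath}
K_m = \{\, z \in \Omega \, ; \ u_{K,\Omega}^\ast (z) \le -1 + 1/m \,\} \, .
\end{displaymath}
These are compact in $\Omega$ because $u_{K,\Omega}^\ast (z) \to 0$ as $z \to \partial\Omega$ (\cite[Proposition~4.5.2]{Klimek}); they are sublevel sets of a psh function in the pseudoconvex domain $\Omega$ and hence holomorphically convex there; and one checks that $u_{K_m,\Omega}$ is a continuous rescaling of $\max(u_{K,\Omega}^\ast, -1+1/m)$, so $K_m$ is regular. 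The $K_m$ decrease to $\{u_{K,\Omega}^\ast = -1\}$, which differs from $K$ only by a pluripolar set, and the continuity of $\capa$ on decreasing compacta (\cite[Theorem~8.2]{BT}) gives $\capa(K_m) \to \capa(K)$. With this modification your argument goes through without the extraneous Runge hypothesis.
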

\begin{proof} [Proof of Theorem~\ref{th majo}]
In the sequel we write $\| \, . \,\|_{H^2}$ for $\| \, . \,\|_{H^2 (\Omega)}$.
We set:
\begin{displaymath} 
\Lambda_N = \limsup_{n \to \infty} [ d_n (A_K) ]^{n^{- 1/ N}} \, .
\end{displaymath} 

Changing $n$ into $n^N$, Proposition~\ref{Zakha-upper} means that for every $\eps > 0$, there exists, for $n$ large enough, an $(n^N - 1)$-dimensional 
subspace $F$ of $\mathcal{C}(K)$ such that, for any $g \in H^{\infty} (\Omega)$, there exists $h \in F$ such that:
\begin{equation} \label{demi}
\Vert g - h\Vert_{\mathcal{C}(K)} \leq (1 + \eps)^n \Lambda_N^n \, \Vert g \Vert_\infty \, .  
\end{equation} 

Let $l$ be an integer to be adjusted later, and  
\begin{displaymath} 
f (z) = \sum_{\alpha} b_\alpha z^\alpha \in H^2 (\Omega) \quad \text{with } \| f \|_{H^2} \leq 1 \, .
\end{displaymath} 
By Proposition~\ref{norme monomes}, we have:
\begin{displaymath} 
\| f \|_{H^2}^2 = \sum_\alpha |b_\alpha|^2 \|e_\alpha \|_{H^2}^2 \, .
\end{displaymath} 
We set:
\begin{displaymath} 
g (z) = \sum_{|\alpha|\leq l} b_\alpha z^\alpha \, . 
\end{displaymath} 
By the Cauchy-Schwarz inequality:
\begin{displaymath} 
| g (z)|^2 \leq \bigg( \sum_{|\alpha|\leq l} |b_\alpha|^2 \| e_\alpha \|_{H^2}^2 \bigg) 
\bigg( \sum_{|\alpha|\leq l} \frac{|z^{\alpha}|^2 \ }{\| e_\alpha \|_{H^2}^2} \bigg) 
\leq \sum_{|\alpha|\leq l} \frac{|z^{\alpha}|^2 \ }{\| e_\alpha \|_{H^2}^2}\, \cdot
\end{displaymath} 
Since $\Omega$ is a good complete Reinhardt domain and since $j_\Omega (z) < 1$ for $z \in \Omega$, we have:
\begin{displaymath} 
| g (z)|^2 \leq \sum_{p = 0}^l p^{c N}  [j_\Omega (z)]^{2 p} \leq (l + 1)^{c N + 1} \, . 
\end{displaymath} 
It follows from \eqref{demi} that there exists $h \in F$ such that:
\begin{displaymath} 
\| g - h \|_{\mathcal{C}(K)} \leq (1 + \eps)^n \Lambda_N^n (l + 1)^{(c N + 1)/2} \, .  
\end{displaymath} 

Since $C_\phi f (z) - C_\phi \, g (z) = f \big( \phi (z) \big) - g \big( \phi (z) \big)$ and $\overline{\phi (\Omega)} \subseteq K$, we have 
\hbox{$\| C_\phi f - C_\phi g \|_\infty \leq \| f - g \|_{{\cal C} (K)}$}; therefore:
\begin{equation} \label{demi+}
\begin{split}
\| g \circ \phi - h \circ \phi \|_{H^2} 
& \leq \| g \circ \phi - h \circ \phi \|_{\infty} \leq \| g - h \|_{\mathcal{C}(K)} \\ 
& \leq (1 + \eps)^n \Lambda_N^n (l + 1)^{(c N + 1)/2} \, .  
\end{split}
\end{equation} 

Now, the subspace $\tilde F$ formed by functions $v \circ \phi$, for $v \in F$, can be viewed as a subspace of 
$L^{\infty}(\partial \Omega, \tilde \mu_\Omega) \subseteq L^{2}(\partial \Omega, \tilde \mu_\Omega)$ (indeed, since $v$ is continuous, we can write 
$(v \circ \phi)^\ast = v \circ \phi^\ast$, where $\phi^\ast$ denotes the almost everywhere existing radial limits of $\phi (r z)$, which belong to $K$). Let 
finally $E = P (\tilde F) \subseteq H^2 (\Omega)$ where $P \colon L^{2}(\partial \Omega, \tilde \mu_\Omega) \to H^2 (\Omega)$ is the orthogonal 
projection. This is a subspace of $H^2 (\Omega)$ with dimension $< n^N$, and we have 
${\rm dist}\, (C_\phi g, E) \leq \| g \circ \phi - P (h \circ \phi) \|_{H^2}$; hence, by \eqref{demi+}:
\begin{equation} \label{demi++} 
{\rm dist}\, (C_\phi g, E) \leq (1 + \eps)^n \Lambda_N^n (l + 1)^{(c N + 1)/2} \, . 
\end{equation} 

Now, the same calculations give that:
\begin{displaymath} 
| f (z) - g (z) |^2 \leq \sum_{p > l} p^{c N} [j_\Omega (z)]^{2 p} \, ;
\end{displaymath} 
hence, for some positive constant $M_N$: 
\begin{displaymath} 
| f (z) - g (z) | \leq M_N (l+1)^{(c N + 1)/2} \frac{[j_\Omega (z)]^l \ }{(1 - [j_\Omega (z)]^2)^{(c N + 1)/2}} \, \raise 1 pt \hbox{,}
\end{displaymath} 
by using the following lemma, whose proof is postponed.
\begin{lemma} \label{lemme utile}
For every non-negative integer $m$, there exists a positive constant $A_m$ such that, for all integers $l \geq 0$ and all $0 < x < 1$, we have:
\begin{displaymath} 
\sum_{p\geq l} p^{m} x^p \leq A_m l^m \frac{x^{l} \,}{(1-x)^{m+1}} \, .
\end{displaymath} 
\end{lemma}

Since $K$ is a compact subset of $\Omega$, there is a positive number $r_0 < 1$ such that $j_\Omega (z) \leq r_0$ for $z \in K$. 
Since $C_\phi f (z) - C_\phi g (z) = f \big( \phi (z) \big) - g \big( \phi (z) \big)$ and $\overline{\phi (\Omega)} \subseteq K$, we have 
$\| C_\phi f - C_\phi g \|_\infty \leq \| f - g \|_{{\cal C} (K)}$, and we get: 
\begin{equation}  \label{huit et demi}
\| C_\phi f - C_\phi g \|_{H^2} \leq \| C_\phi f - C_\phi g \|_\infty \leq M_N \, (l + 1)^{(c N + 1)/2} \, \frac{r_0^l}{(1 - r_0^2)^{(c N + 1)/2}} \, \cdot
\end{equation} 
Now, \eqref{demi++} and \eqref{huit et demi} give:
\begin{displaymath} 
{\rm dist}\, (C_\phi f, E) \leq M_N \, (l + 1)^{(c N +1 )/2} \,\bigg( \frac{r_0^l}{(1 - r_0^2)^{(c N + 1)/2}} + (1 + \eps)^n \Lambda_N^n \bigg) \, .
\end{displaymath}
It ensues, thanks to \eqref{a=d}, that:
\begin{displaymath} 
\big[ a_{n^N} (C_\phi) \big]^{1/n}
\leq  [M_N \, (l + 1)^{(c N + 1)/2}]^{1/n} \, \bigg[ \frac{r_0^{l/n}}{(1 - r_0^2)^{(c N + 1)/2n}} + (1 + \eps) \, \Lambda_N \bigg] \, .
\end{displaymath} 
Taking now for $l$ the integer part of $n \log n$, and passing to the upper limit as $n \to \infty$, we obtain (since $l / n \to \infty$ and $(\log l)/n\to 0$):
\begin{displaymath} 
\beta_N^{+} (C_\phi) \leq (1 + \eps) \, \Lambda_N \, ,
\end{displaymath} 
and therefore, since $\eps > 0$ is arbitrary:
\begin{displaymath} 
\beta_N^{+} (C_\phi) \leq \Lambda_N \, . 
\end{displaymath} 
That ends the proof, by using Proposition~\ref{Zakha-upper}.
\end{proof}
\begin{proof} [Proof of Lemma~\ref{lemme utile}]
We make the proof by induction on $m$. We set:
\begin{displaymath} 
S_m = \sum_{p\geq l} p^{m} x^p
\end{displaymath} 
The result  is obvious for $m = 0$, with $A_0 = 1$, since then
$S_0 = \sum_{p \geq l} x^p =\frac{x^l}{1 - x} \, \cdot$
Let us assume that it holds till $m - 1$ and prove it for $m$. We observe that, since $p^m - (p - 1)^{m}\leq m p^{m - 1}$, we have:
\begin{align*}
(1 - x) S_m 
& =\sum_{p\geq l} p^{m}x^p - \sum_{p \geq l} p^{m} x^{p + 1} =\sum_{p \geq l} p^{m} x^p - \sum_{p \geq l+1} (p - 1)^{m} x^{p} \\
& =\sum_{p \geq l + 1} (p^m - (p - 1)^{m}) x^p + l^m x^l \leq \sum_{p \geq l + 1} m p^{m - 1}x^p + l^m x^l \\ 
& \leq\sum_{p \geq l} m p^{m - 1}x^p + l^m x^l \leq m A_{m - 1} l^{m - 1} \frac{x^{l}}{(1 - x)^m} + l^m x^l \\
& \leq (m A_{m - 1} + 1) \, l^{m}\frac{x^{l}}{(1 - x)^m} \, \raise 1 pt \hbox{,}
\end{align*}
giving the result, with $A_m = m A_{m - 1} + 1$. 
\end{proof}
\goodbreak
\subsection{Equality}
\begin{proposition} \label{capa adh}
Let $\Omega$ be a bounded hyperconvex domain and $\omega$ a relatively compact open subset of $\Omega$. Assume that:
\begin{equation} \label{eq ast}
\begin{split}
& \text{For every $a \in \partial \omega$, except on a pluripolar set $E \subseteq \partial \omega$, there exists}  \\  
&  \text{$z_0 \in \omega$ such that the open segment $(z_0, a)$ is contained in $\omega$.} 
\end{split}
\end{equation}

Then:
\begin{displaymath} 
\capa (\overline \omega) = \capa (\omega) \, .
\end{displaymath} 

In particular, if  $\phi \colon \Omega \to \Omega$ a non-degenerate holomorphic map such that $\overline{\phi (\Omega)} \subseteq \Omega$ and 
$\omega = \phi (\Omega)$ satisfies \eqref{eq ast}, we have:
\begin{displaymath} 
\capa \big[\phi (\Omega) \big] = \capa \big[ \overline{\phi (\Omega)} \big] \, .
\end{displaymath} 
\end{proposition}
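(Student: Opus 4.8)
The plan is to prove the nontrivial inequality $\capa(\overline\omega) \le \capa(\omega)$, since the reverse is immediate from monotonicity of the capacity ($\omega \subseteq \overline\omega$, and by definition the capacity of an open set is the supremum of capacities of its compact subsets, while for the compact set $\overline\omega$ with $\overline{\overline\omega}\subset\Omega$ we may use \eqref{more convenient}). By \eqref{more convenient} and \eqref{capa ouvert} it suffices to show $u_{\overline\omega,\Omega}^\ast = u_{\omega,\Omega}$, or rather that $\int_\Omega (dd^c u_{\overline\omega,\Omega}^\ast)^N = \int_\Omega (dd^c u_{\omega,\Omega})^N$; and since $\omega\subseteq\overline\omega$ forces $u_{\overline\omega,\Omega}\le u_{\omega,\Omega}$, hence $u_{\overline\omega,\Omega}^\ast \le u_{\omega,\Omega}^\ast = u_{\omega,\Omega}$ (the latter equality because $u_{\omega,\Omega}$ is already u.s.c.\ for an \emph{open} set, being $\equiv -1$ on $\omega$), the whole matter reduces to proving the pointwise reverse inequality $u_{\omega,\Omega} \le u_{\overline\omega,\Omega}^\ast$ on $\Omega$, equivalently $u_{\overline\omega,\Omega}^\ast = -1$ on all of $\overline\omega$ — i.e.\ that $\overline\omega$ is a \emph{regular} compact set in the sense of the ``Regular sets'' subsection. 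Equivalently again, it is enough to show that the pluripolar set where $u_{\overline\omega,\Omega}^\ast > -1$ — which a priori can only occur on $\partial\omega$, since $u^\ast = -1$ on the interior $\omega$ — is actually empty.

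The key step exploits hypothesis \eqref{eq ast}: fix $a\in\partial\omega$ not in the exceptional pluripolar set $E$, and pick $z_0\in\omega$ with the open segment $(z_0,a)\subseteq\omega$. Let $v\in{\cal PSH}(\Omega)$ be any competitor in the definition of $u_{\overline\omega,\Omega}$, so $v\le 0$ on $\Omega$ and $v\le -1$ on $\overline\omega$; in particular $v\le -1$ on the whole segment $(z_0,a)$. The plan is to parametrize the real line through $z_0$ and $a$ by a complex variable, i.e.\ consider the complex line $L = \{z_0 + \zeta(a-z_0) : \zeta\in\C\}$ and the subharmonic function $h(\zeta) = v\big(z_0+\zeta(a-z_0)\big)$ on $\Omega\cap L$; then $h\le -1$ on the real segment $\{ t : 0< t < 1\}$ and $h\le 0$ throughout. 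Because $a$ is an endpoint of that segment and $h$ is u.s.c., $\limsup_{\zeta\to 1} h(\zeta) \le -1$ along the real approach, and one concludes $h(1)\le -1$ using that a subharmonic function is $\le$ its u.s.c.\ regularization which here agrees with the obvious upper bound — more carefully, one applies the maximum principle / sub-mean-value property on small disks centered on the segment accumulating at $\zeta=1$, exactly as in the proof of the preceding Proposition where the maximum principle \cite[Corollary~2.9.6]{Klimek} was invoked to pass from $\partial K$ to $K$. This yields $v(a)\le -1$ for every competitor $v$, hence $u_{\overline\omega,\Omega}(a)\le -1$, hence $=-1$. Thus $u_{\overline\omega,\Omega} = -1$ on $\partial\omega\setminus E$ and on $\omega$, so on $\overline\omega\setminus E$; taking the u.s.c.\ regularization and using that $E$ is pluripolar (so $u_{\overline\omega,\Omega}^\ast$ cannot jump up on it, by the negligibility of pluripolar sets for regularized upper envelopes — \cite[Theorem~4.7.5]{Klimek} and the surrounding Bedford–Taylor theory), we get $u_{\overline\omega,\Omega}^\ast = -1$ on all of $\overline\omega$, which is regularity of $\overline\omega$.

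With $\overline\omega$ regular, $u_{\overline\omega,\Omega}^\ast(z) = -1$ for $z\in\overline\omega$ and $u_{\overline\omega,\Omega}^\ast \le 0$ everywhere, so $u_{\overline\omega,\Omega}^\ast$ is itself a competitor in the definition of $u_{\omega,\Omega}$ (here it matters that $\omega$ is open: we need $v\le -1$ on $\omega$, which holds since $\omega\subseteq\overline\omega$), whence $u_{\overline\omega,\Omega}^\ast \le u_{\omega,\Omega}$; combined with the trivial $u_{\overline\omega,\Omega}^\ast\ge u_{\overline\omega,\Omega}\ge$ … wait — combined with the earlier $u_{\overline\omega,\Omega}^\ast \le u_{\omega,\Omega}$ being now complemented by $u_{\omega,\Omega}\le u_{\overline\omega,\Omega}^\ast$ (which is what regularity just gave, pointwise on $\overline\omega$ where both equal $-1$, and outside $\overline\omega$ where $u_{\omega,\Omega}$ and $u_{\overline\omega,\Omega}^\ast$ satisfy the same defining constraints), we obtain $u_{\omega,\Omega} = u_{\overline\omega,\Omega}^\ast$ on $\Omega$. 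Then \eqref{more convenient} and \eqref{capa ouvert} give $\capa(\overline\omega) = \int_\Omega (dd^c u_{\overline\omega,\Omega}^\ast)^N = \int_\Omega (dd^c u_{\omega,\Omega})^N = \capa(\omega)$. For the final assertion one simply takes $\omega = \phi(\Omega)$, which is open because $\phi$ is non-degenerate, and relatively compact in $\Omega$ by hypothesis, and applies the first part. The main obstacle I expect is the careful justification of the segment-to-endpoint propagation of the bound $v\le -1$ — i.e.\ showing a subharmonic function bounded by $-1$ on an open real segment and by $0$ nearby is $\le -1$ at the segment's endpoint — and then the standard but slightly delicate point that the regularized relative extremal function is unaffected by the pluripolar exceptional set $E$; both are classical (the first is the maximum principle on shrinking disks, the second is Bedford–Taylor's negligibility theorem) but must be cited precisely rather than waved at.
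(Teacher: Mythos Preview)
There is a genuine gap, and it stems from working with the wrong extremal function. You take $v$ to be a competitor for $u_{\overline\omega,\Omega}$, i.e.\ $v\in{\cal PSH}(\Omega)$, $v\le 0$, and $v\le -1$ on $\overline\omega$. But then $v(a)\le -1$ is \emph{given} for every $a\in\overline\omega$, so the whole segment argument is vacuous and hypothesis~\eqref{eq ast} is never used. What you actually establish is the trivial identity $u_{\overline\omega,\Omega}=-1$ on $\overline\omega$; this says nothing about the regularization $u_{\overline\omega,\Omega}^\ast$ at points of $\partial\omega$, because the jump of $u_{\overline\omega,\Omega}^\ast$ there comes from approaching $a$ from \emph{outside} $\overline\omega$. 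In particular your target ``$u_{\overline\omega,\Omega}^\ast=-1$ on $\overline\omega$'' (regularity of $\overline\omega$) need not hold at points of $E$, and nothing in your argument addresses it elsewhere. The object you should be analysing is $u_{\omega,\Omega}$: the relevant claim is that every competitor $v$ for $u_{\omega,\Omega}$ (so $v\le -1$ only on the \emph{open} set $\omega$) satisfies $v(a)\le -1$ for $a\in\partial\omega\setminus E$, which then gives $u_{\omega,\Omega}=-1$ on $\overline\omega\setminus E$ and allows the comparison of the two regularized extremal functions modulo a pluripolar set.

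Even after that correction, your proposed justification of the endpoint estimate is not valid. From $h\le -1$ on the real segment $(0,1)$ and $h\le 0$ nearby, the sub-mean-value inequality on small disks centred on the segment gives nothing, since the segment has planar measure zero; the analogy with the preceding proposition (where the maximum principle passes from $\partial K$ to the \emph{interior} of $K$) does not apply. The correct reason $h(1)\le -1$ is that a connected set in $\C$ with more than one point is never thin (\emph{effil\'e}) at any point of its closure; this is a nontrivial one-variable potential-theoretic fact, and it is exactly \cite[Lemma~2.4]{LQR} that the paper invokes. The paper phrases the argument in terms of the plurifine topology: hypothesis~\eqref{eq ast} forces each $a\in\partial\omega\setminus E$ to lie in the fine closure $\omega^f$ (by restriction to the complex line through the segment and non-thinness in dimension~$1$), whence $\overline\omega\setminus\omega^f\subseteq E$ is pluripolar, and one concludes via $u_{\omega,\Omega}=u_{\omega^f,\Omega}$ (fine-continuity of psh functions) together with subadditivity of capacity. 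Once you fix the two points above, your route and the paper's are essentially the same argument in different language.
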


Before proving Proposition~\ref{capa adh}, let us give an example of such a situation.
\begin{proposition} 
Let $\Omega$ be a bounded hyperconvex domain with $C^1$ boundary. Let $U$ be an open neighbourhood of $\overline{\Omega}$ and 
$\phi \colon U \to \C^N$ be a non-degenerate holomorphic function such that $\overline{\phi (\Omega)} \subseteq \Omega$. Then the condition \eqref{eq ast} 
is satisfied.
\end{proposition}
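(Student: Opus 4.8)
The plan is to verify that $\omega := \phi (\Omega)$ satisfies condition~\eqref{eq ast}. Write $J\phi = \det \phi'$ for the Jacobian determinant. Since $\phi$ is non-degenerate and $\overline\Omega$ is connected (as $\Omega$ is a domain), $J\phi$ does not vanish identically on the connected component $U_0$ of $U$ containing $\overline\Omega$, so $Z := \{ z \in U_0 \, ; \ J\phi (z) = 0 \}$ is a proper analytic subset of $U_0$, hence pluripolar. Invoking the (standard) fact that the holomorphic image of a pluripolar set is pluripolar, the set $\phi (Z \cap \overline\Omega)$ is pluripolar in $\C^N$, and hence so is $E := \partial\omega \cap \phi (Z \cap \overline\Omega) \subseteq \partial\omega$. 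It then suffices to show that every $a \in \partial\omega \setminus E$ is visible from a point of $\omega$ along a straight segment.

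Fix such an $a$. Since $\phi$ is continuous on the compact set $\overline\Omega$, one has $\phi (\overline\Omega) = \overline{\phi (\Omega)} = \overline\omega \supseteq \partial\omega$, so there is $w \in \overline\Omega$ with $\phi (w) = a$. If $J\phi (w) = 0$ then $w \in Z \cap \overline\Omega$ and $a \in \partial\omega \cap \phi (Z \cap \overline\Omega) = E$, which is excluded; hence $\phi ' (w)$ is invertible. If moreover $w \in \Omega$, then $\phi$ is open near $w$, so $a = \phi (w)$ is an interior point of $\phi (\Omega) = \omega$, contradicting $a \in \partial\omega$. Therefore $w \in \partial\Omega$ and $\phi '(w)$ is invertible.

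Next I would localize. By the inverse function theorem, $\phi$ maps a small neighbourhood $V$ of $w$ biholomorphically onto a neighbourhood $W$ of $a$; choosing $V$ inside a $C^1$ boundary chart of $\Omega$, we may assume $\Omega \cap V = \{ z \in V \, ; \ \rho (z) < 0 \}$ for a $C^1$ defining function $\rho$ with $\nabla\rho \neq 0$ on $\partial\Omega \cap V$. Since $\phi|_V$ is a bijection onto $W$,
\[
\omega \supseteq \phi (\Omega \cap V) = \{ \zeta \in W \, ; \ \rho_1 (\zeta) < 0 \} \, , \qquad \rho_1 := \rho \circ (\phi|_V)^{-1} \, ,
\]
and $\rho_1$ is a $C^1$ function on $W$ with $\rho_1 (a) = \rho (w) = 0$ and $d\rho_1 (a) \neq 0$ (the differential of $(\phi|_V)^{-1}$ at $a$ is $\phi '(w)^{-1}$, which is invertible, and $d\rho_w \neq 0$). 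Picking a vector $v$ with $d\rho_1 (a) (v) < 0$ (possible since $d\rho_1 (a) \neq 0$), the $C^1$ function $t \mapsto \rho_1 (a + t v)$ vanishes at $t = 0$ with negative derivative there, hence is $< 0$ on some interval $(0, \delta]$ on which $a + t v$ stays in $W$. Then $z_0 := a + \delta v \in \omega$ and the open segment $(z_0, a) = \{ a + t v \, ; \ 0 < t < \delta \}$ lies in $\{ \zeta \in W \, ; \ \rho_1 (\zeta) < 0 \} \subseteq \omega$; this is exactly what \eqref{eq ast} asks for.

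The one ingredient that is not entirely elementary, and where I expect the main subtlety, is the assertion that $\phi (Z \cap \overline\Omega)$ is pluripolar: this reduces to the known fact that the holomorphic image of an analytic set of positive codimension (or of any pluripolar set) is pluripolar, which I would invoke with an appropriate reference from pluripotential theory. Everything else is a purely local argument combining the inverse function theorem with the $C^1$ regularity of $\partial\Omega$; in particular only a single point $w$ of the fibre $\phi^{-1} (a)$ is used, so no finiteness or global structure of that fibre need be discussed.
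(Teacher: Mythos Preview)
Your proof is correct and follows essentially the same strategy as the paper: the exceptional set $E$ is taken to be the image under $\phi$ of the zero locus of the Jacobian (intersected with $\partial\omega$), which is pluripolar because holomorphic images of pluripolar sets are pluripolar, and for boundary points outside $E$ one picks a preimage $w \in \partial\Omega$ with $\phi'(w)$ invertible and uses the $C^1$ regularity of $\partial\Omega$ together with the inverse function theorem to produce an inward segment. The paper's proof is considerably more terse at the last step, merely asserting the existence of a tangent hyperplane and an inward normal; your explicit use of a local defining function $\rho$ and the transported function $\rho_1 = \rho \circ (\phi|_V)^{-1}$ makes this step rigorous and transparent, but the underlying idea is identical.
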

\begin{proof} 
Let $\omega = \phi (\Omega)$. 

We may assume that $U$ is connected, hyperconvex and bounded. Let $B_\phi$ be the set of points $z \in U$ such that the complex Jacobian 
$J_\phi$ is null. Since $J_\phi$ is holomorphic in $\Omega$, we have $\log |J_\phi| \in {\cal PSH} (U)$ and hence (see \cite[proof of Lemma~10.2]{KOSK}):
\begin{displaymath} 
B_\phi = \{ z \in U \, ; \ J_\phi (z) = 0 \} = \{z \in U \, ; \ \log |J_\phi (z)| = - \infty\} 
\end{displaymath} 
is pluripolar . Therefore (see \cite[Theorem~6.9]{BT}), $\capa (B_\phi, U) = 0$. It follows (see \cite[page~2, line -8]{BT}) that 
$\capa [ \phi (B_\phi) ] := \capa [ \phi (B_\phi), \Omega ] = 0$.
\par\smallskip

Now, for every $a \in \partial {\overline \omega} \cap [\phi (U \setminus B_\phi)]$, there is a tangent hyperplane $H_a$ to $\overline \omega$, 
and hence an inward normal to $\partial \overline \omega$ (note that  $\partial {\overline \omega} \subseteq \phi (\partial \Omega) \subseteq \phi (U)$). 
It follows that there is $z_0 \in \omega$ such that the open interval $(z_0, a)$ is contained in $\omega$. 
\end{proof}
\begin{proof} [Proof of Proposition~\ref{capa adh}]  
Let $a \in \partial \omega$ and $L$ be a complex line containing $(z_0, a)$; we have $a \in \overline{\omega \cap L}$. 
Assume now that this point $a$ is a \emph{fine} (``\emph{effil\'e}'') point of $\omega$, i.e. that there exists $u \in {\cal PSH} (V)$,  
for $V$ a neighbourhood of $a$, such that:
\begin{displaymath} 
\limsup_{z \to a\, , z \in \omega} u (z) < u (a) \, .
\end{displaymath} 
By definition, the restriction $\tilde u$ of $u$ to $\omega \cap L$ is subharmonic and we keep the inequality:
\begin{displaymath} 
\limsup_{z \to a\, , z \in \omega \cap L} {\tilde u} (z) < {\tilde u} (a) = u (a) \, .
\end{displaymath} 
That means that $a$ is a fine point of $\omega \cap L$. But $a \in \overline{\omega \cap L}$ and $\omega \cap L$ is connected, so this is not possible, by 
\cite[Lemma~2.4]{LQR}. Hence no point of $\partial \omega \setminus E$ is fine.
\par\smallskip

Let now $\omega^f$ be the closure of $\omega$ for the fine topology (i.e. the coarsest topology on $U$ for which all the functions in ${\cal PSH} (U)$ are 
continuous; it is known: see \cite[comment after Theorem~2.3]{BT2}, that it is the trace on $U$ of the fine topology on $\C^N$). It is also known 
(see \cite[Corollary~4.8.10]{Klimek}) that $\omega^f$ is the set of points of $\overline{\omega}$ which are not fine. By the above reasoning, we thus have:
\begin{displaymath} 
\overline \omega \setminus \omega^f \subseteq E \, .
\end{displaymath} 
Since $\capa (E) = 0$, we have:
\begin{displaymath} 
\capa (\overline \omega \setminus\omega^f ) = 0 \, ,
\end{displaymath} 
and it follows that:
\begin{displaymath} 
\capa (\overline \omega) = \capa [ \omega^f \cup (\overline \omega \setminus\omega^f ) ] 
\leq \capa (\omega^f ) + \capa (\overline \omega \setminus\omega^f ) = \capa (\omega^f ) \, ,
\end{displaymath} 
and hence $\capa (\omega^f) = \capa (\overline \omega)$.

But, since, by definition, the \emph{psh} functions are continuous for the fine topology, it is clear, that the relative extremal functions $u_{\omega, \Omega}$ 
and $u_{\omega^f, \Omega}$ are equal; hence we have, by \cite[Proposition~4.7.2]{Klimek}:
\begin{displaymath} 
\capa (\omega) = \int_\Omega (dd^c u_{\omega, \Omega}^\ast)^N = \int_\Omega (dd^c u_{\omega^f, \Omega}^\ast)^N = \capa (\omega^f) \, .
\end{displaymath} 
Hence $\capa (\omega) = \capa (\overline\omega)$.
\end{proof}
%

\subsection{Consequences of the spectral radius type formula}

Theorem~\ref{th mino} has the following consequence.
\begin{proposition} \label{capa infinie}
Let $\Omega$ be a regular bounded symmetric domain in $\C^N$, and let $\phi \colon \Omega \to \Omega$ be a non-degenerate analytic 
function inducing a bounded composition operator $C_\phi$ on $H^2 (\Omega)$. 

Then, if $\capa [\phi (\Omega)] = \infty$, we have $\beta_N (C_\phi) = 1$. 

In other words, if, for some constants $C, c > 0$, we have $a_n (C_\phi) \leq C\, \e^{- c n^{1/N}}$ for all $n \geq 1$, then 
$\capa [\phi (\Omega)] < \infty$.
\end{proposition}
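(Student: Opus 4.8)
The plan is to derive Proposition~\ref{capa infinie} directly from the minoration result, Theorem~\ref{th mino}, together with the definition of $\Gamma_N$. First I would check that the hypotheses of Theorem~\ref{th mino} are met: a regular bounded symmetric domain $\Omega$ is, as noted right after the definition of strong regularity, strongly regular (its sublevel sets for $\rho(z)=\log\|z\|$ are balls $B(0,\e^c)$, whose closures are regular); it is bounded and hyperconvex by construction; and it is a Runge domain, being circled. Moreover $\phi$ is non-degenerate by assumption. Thus Theorem~\ref{th mino} applies and gives
\begin{displaymath}
\Gamma_N\big[\phi(\Omega)\big]\le \beta_N^-(C_\phi)\le \beta_N^+(C_\phi)\le 1,
\end{displaymath}
the last inequality because $\overline{\phi(\Omega)}\subseteq\Omega$ forces $C_\phi$ to be a contraction (or at least bounded) so that $a_{n^N}(C_\phi)\le \|C_\phi\|$ is bounded, whence $\beta_N^+(C_\phi)\le 1$; actually one only needs $a_1(C_\phi)=\|C_\phi\|<\infty$ and that $a_n$ is nonincreasing.

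Next I would unwind the hypothesis $\capa[\phi(\Omega)]=\infty$. By definition
\begin{displaymath}
\Gamma_N(K)=\exp\bigg[-2\pi\bigg(\frac{N!}{\capa(K)}\bigg)^{1/N}\bigg],
\end{displaymath}
so when $\capa[\phi(\Omega)]=\infty$ the exponent vanishes and $\Gamma_N[\phi(\Omega)]=\e^{0}=1$. Combining with the sandwich above yields $1\le\beta_N^-(C_\phi)\le\beta_N^+(C_\phi)\le 1$, hence $\beta_N^-(C_\phi)=\beta_N^+(C_\phi)=1$, i.e. $\beta_N(C_\phi)=1$, which is exactly the first assertion.

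For the contrapositive reformulation: if $a_n(C_\phi)\le C\,\e^{-cn^{1/N}}$ for all $n$ with $C,c>0$, then along the subsequence $n\mapsto n^N$ we get $a_{n^N}(C_\phi)\le C\,\e^{-cn}$, so $[a_{n^N}(C_\phi)]^{1/n}\le C^{1/n}\e^{-c}\to \e^{-c}<1$, whence $\beta_N^+(C_\phi)\le \e^{-c}<1$. By the first part this is incompatible with $\capa[\phi(\Omega)]=\infty$ (which would force $\beta_N=1$), so necessarily $\capa[\phi(\Omega)]<\infty$.

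I do not anticipate a genuine obstacle here: the content is entirely in Theorem~\ref{th mino}, and the proposition is essentially a restatement of its lower bound in the degenerate-capacity regime. The only small points worth stating carefully are (i) that a regular bounded symmetric domain satisfies all the structural hypotheses (strongly regular, Runge, hyperconvex) invoked in Theorem~\ref{th mino}, and (ii) that $\beta_N^+(C_\phi)\le 1$ always holds because $C_\phi$ is bounded and the approximation numbers are nonincreasing. Neither requires more than a line.
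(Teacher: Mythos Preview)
There is a genuine gap. Theorem~\ref{th mino} requires the hypothesis $\overline{\phi(\Omega)}\subseteq\Omega$, which is \emph{not} assumed in Proposition~\ref{capa infinie} --- there one only assumes that $C_\phi$ is bounded. In fact the two conditions are incompatible: if $\overline{\phi(\Omega)}$ were a compact subset of $\Omega$, then $\capa[\phi(\Omega)]\le\capa\big[\overline{\phi(\Omega)}\big]<\infty$ (for instance by the estimate $\capa(K)\le C_N/[{\rm dist}(K,\partial\Omega)]^N$ given a few lines later in the paper). So precisely in the situation of interest, namely $\capa[\phi(\Omega)]=\infty$, the image $\phi(\Omega)$ must touch $\partial\Omega$ and Theorem~\ref{th mino} does not apply to $\phi$ directly. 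Your sandwich argument therefore never gets started.

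The paper repairs this by an approximation: for $t<0$ set $\phi_t(w)=\phi(\e^{t}w)$. Since $\e^{t}\overline{\Omega}$ is compact in $\Omega$, one has $\overline{\phi_t(\Omega)}\subseteq\Omega$, so Theorem~\ref{th mino} legitimately applies to $\phi_t$ and gives $\Gamma_N[\phi_t(\Omega)]\le\beta_N^-(C_{\phi_t})$. The extra work is then to compare $C_{\phi_t}$ with $C_\phi$: using that $S_\Omega(r)=\e^{r}\partial\Omega$ for a bounded symmetric domain, one checks that for any finite-rank $R$ the operator $R_t f(w)=Rf(\e^{t}w)$ has rank at most that of $R$ and $\|C_{\phi_t}f-R_tf\|_{H^2}\le\|C_\phi f-Rf\|_{H^2}$, whence $a_n(C_{\phi_t})\le a_n(C_\phi)$ and $\beta_N^-(C_{\phi_t})\le\beta_N^-(C_\phi)$. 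Finally $\phi_t(\Omega)=\phi(\e^{t}\Omega)$ increases to $\phi(\Omega)$ as $t\uparrow 0$, so $\capa[\phi_t(\Omega)]\to\capa[\phi(\Omega)]=\infty$, hence $\Gamma_N[\phi_t(\Omega)]\to 1$, and one concludes $\beta_N^-(C_\phi)\ge 1$. Your treatment of the contrapositive and of the inequality $\beta_N^+(C_\phi)\le 1$ is fine; what is missing is this truncation step, without which Theorem~\ref{th mino} is simply not available.
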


As a corollary, we can give a new proof of \cite[Theorem~3.1]{LIQR}.
\begin{corollary}
Let $\tau \colon \D \to \D$ be an analytic map such that $\| \tau \|_\infty = 1$ and $\psi \colon \D^{N - 1} \to \D^{N - 1}$ such that the map 
$\phi \colon \D^N \to \D^N$, defined as:
\begin{displaymath} 
\phi (z_1, z_2, \ldots, z_N) = \big( \tau (z_1), \psi (z_2, \ldots, z_N) \big) \, , 
\end{displaymath} 
is non-degenerate. Then $\beta_N (C_\phi) = 1$.
\end{corollary}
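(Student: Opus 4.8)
The plan is to deduce the Corollary from Proposition~\ref{capa infinie} by showing that the image $\phi(\D^N)$ has infinite Monge-Amp\`ere capacity in $\D^N$. Since $\D^N = \D \times \D^{N-1}$ is a (good, regular) bounded symmetric domain and $\phi$ is assumed non-degenerate, Proposition~\ref{capa infinie} applies and reduces everything to the single assertion
\begin{displaymath}
\capa\big[\phi(\D^N)\big] = +\infty \, .
\end{displaymath}
The point is that $\phi(\D^N) \subseteq \tau(\D) \times \D^{N-1}$, and the hypothesis $\|\tau\|_\infty = 1$ means that the ``disk factor'' $\tau(\D)$ is not relatively compact in $\D$: its closure touches the unit circle $\T$.

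First I would recall how the capacity behaves under products. The key input is Blocki's formula for the Monge-Amp\`ere capacity of a Cartesian product, already invoked in the discussion after Theorem~\ref{kara}: if $K_1 \subseteq \D$ and $K_2 \subseteq \D^{N-1}$ are compact, then $\capa_{\D^N}(K_1 \times K_2)$ is governed by the relative extremal function of the product, which by Blocki's result is (up to the right normalization) expressed through the one-dimensional Green capacity of $K_1$ in $\D$ and the capacity of $K_2$ in $\D^{N-1}$. Concretely, one gets a bound of the form $\capa_{\D^N}(K_1 \times K_2) \geq c_N \, \capa_{\D}(K_1) \cdot \capa_{\D^{N-1}}(K_2)$ for a dimensional constant $c_N > 0$ (or the analogous exact product formula). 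Then I would take an exhausting sequence: pick compacts $F_m \subseteq \tau(\D)$ with $\capa_{\D}(F_m) \to \capa_{\D}\big[\tau(\D)\big]$, and a fixed compact $K_2 \subseteq \D^{N-1}$ with non-void interior so that $\capa_{\D^{N-1}}(K_2) > 0$; monotonicity of capacity and $F_m \times K_2 \subseteq \tau(\D) \times \D^{N-1}$ then give $\capa_{\D^N}\big[\tau(\D)\times\D^{N-1}\big] \geq c_N \, \capa_{\D}\big[\tau(\D)\big] \cdot \capa_{\D^{N-1}}(K_2)$.

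Next, the one-dimensional fact: since $\|\tau\|_\infty = 1$, the open set $\tau(\D)$ is an open subset of $\D$ whose closure meets $\T$. The Green capacity (equivalently the $N=1$ Monge-Amp\`ere capacity) of an open subset $\omega$ of $\D$ that accumulates at the boundary is infinite --- this is exactly the classical statement underlying \eqref{nt}: for the Green function $g_\D$ of $\D$ to be bounded below on $\omega$ one needs $\overline\omega \subset \D$, and conversely the relative extremal function $u_{\omega,\D}$ has $\int_\D (dd^c u_{\omega,\D})^2 = \capa(\omega)$ which diverges as $\omega$ fattens toward $\T$. (One can see this very concretely: take disks $D(a_k, \rho_k) \subseteq \omega$ with $a_k \to \T$; each contributes a fixed positive amount to the capacity by translation/conformal invariance near the boundary, or more precisely $\capa$ of a small disk approaching $\T$ stays bounded below, and one can fit infinitely many disjoint ones, so by subadditivity-from-below / monotonicity the capacity is $+\infty$.) Hence $\capa_\D\big[\tau(\D)\big] = \infty$, and combined with the previous paragraph $\capa_{\D^N}\big[\tau(\D)\times\D^{N-1}\big] = \infty$, whence $\capa_{\D^N}\big[\phi(\D^N)\big] = \infty$ by monotonicity again ($\phi(\D^N)$ has non-void interior by non-degeneracy, so it is not pluripolar, but we do not even need that --- we just need the lower bound on the bigger set to force, via Proposition~\ref{capa infinie}, that $\beta_N(C_\phi) = 1$; note Proposition~\ref{capa infinie} is stated for $\capa[\phi(\Omega)] = \infty$, so I must be slightly careful and instead argue directly that $a_{n^N}(C_\phi) \geq \Gamma_N[\phi(\D^N)] \to$ well --- more cleanly: apply Theorem~\ref{th mino}, which gives $\beta_N^-(C_\phi) \geq \Gamma_N[\phi(\D^N)]$, and since $\capa[\phi(\D^N)] = \infty$ we have $\Gamma_N[\phi(\D^N)] = 1$, so $\beta_N^-(C_\phi) = 1$; as always $\beta_N^+(C_\phi) \leq 1$ trivially because $a_n \leq \|C_\phi\|$ and $[\,\|C_\phi\|\,]^{1/n} \to 1$, so $\beta_N(C_\phi) = 1$). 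This is exactly the content of Proposition~\ref{capa infinie}, so invoking it directly is cleanest.

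The main obstacle is the product estimate for the Monge-Amp\`ere capacity: one must be careful about the precise normalization in Blocki's theorem (the pluricomplex Green function of a product is the maximum of the Green functions of the factors, but the relative extremal function of a product of compacts is \emph{not} simply a max, and the capacity does not literally factor --- it involves mixed Monge-Amp\`ere masses). I expect to need the inequality $\capa_{\Omega_1 \times \Omega_2}(K_1 \times K_2) \geq c \cdot \capa_{\Omega_1}(K_1) \cdot \capa_{\Omega_2}(K_2)$ in the form proved by Blocki (as cited after Theorem~\ref{kara}), together with the elementary one-dimensional divergence of Green capacity for boundary-accumulating open sets; everything else --- monotonicity, exhaustion, and the appeal to Theorem~\ref{th mino} / Proposition~\ref{capa infinie} --- is routine.
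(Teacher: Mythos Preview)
Your overall strategy---show $\capa[\phi(\D^N)] = \infty$ and invoke Proposition~\ref{capa infinie}---is exactly what the paper does, and your identification of Blocki's product formula and of the one-dimensional fact $\capa_\D[\tau(\D)] = \infty$ (from $\|\tau\|_\infty = 1$) as the two inputs is correct. But there is a genuine gap in how you combine them.

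You work with the \emph{inclusion} $\phi(\D^N) \subseteq \tau(\D) \times \D^{N-1}$, show that the right-hand side has infinite capacity, and then write ``whence $\capa_{\D^N}[\phi(\D^N)] = \infty$ by monotonicity again''. Monotonicity goes the wrong way here: from $A \subseteq B$ you only get $\capa(A) \leq \capa(B)$, so $\capa(B) = \infty$ tells you nothing about $\capa(A)$. Your parenthetical attempt to salvage this by invoking Theorem~\ref{th mino} directly also fails, because that theorem assumes $\overline{\phi(\Omega)} \subseteq \Omega$, which is precisely what is \emph{not} true when $\|\tau\|_\infty = 1$; this is why Proposition~\ref{capa infinie} exists (its proof passes through the truncations $\phi_t$).

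The fix is a one-line observation you have overlooked: because $\phi$ has product form, the image is not merely contained in a product---it \emph{is} a product,
\[
\phi(\D^N) = \tau(\D) \times \psi(\D^{N-1}) \, .
\]
Now apply Blocki's formula to \emph{this} product. The non-degeneracy of $\phi$ forces $\psi$ to be non-degenerate (the Jacobian of $\phi$ is block-triangular with blocks $\tau'$ and $J_\psi$), so $\psi(\D^{N-1})$ is non-pluripolar and ${\rm Cap}_{N-1}[\psi(\D^{N-1})] > 0$. Since ${\rm Cap}_1[\tau(\D)] = \infty$, Blocki gives ${\rm Cap}_N[\phi(\D^N)] = {\rm Cap}_1[\tau(\D)] \cdot {\rm Cap}_{N-1}[\psi(\D^{N-1})] = \infty$, and Proposition~\ref{capa infinie} finishes the argument. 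This is exactly the paper's proof; your version would become correct (and essentially identical to it) once you replace the inclusion by the equality and use the second factor $\psi(\D^{N-1})$ rather than $\D^{N-1}$.
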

\begin{proof}
Since the map $\phi$ is non-degenerate, $\psi$ is also non-degenerate. Hence (see \cite[Proposition~2]{Nguyen-Plesniak} $\psi (\D^{N  - 1})$ is not pluripolar, 
i.e. ${\rm Cap}_{N - 1} [\psi (\D^{N - 1})] > 0$. On the other hand, it follows from \cite[Theorem~3.13 and Theorem~3.14]{LQR} that 
${\rm Cap}_1 [\tau (\D)] = + \infty$. Then, by \cite[Theorem~3]{Blocki}, we have:
\begin{align*} 
{\rm Cap}_N [\phi (\D^N)] 
& = {\rm Cap}_N [\tau (\D)  \times \psi (\D^{N - 1})]  \\
& = {\rm Cap}_1 [\tau (\D)] \times {\rm Cap}_{N  - 1} [\psi (\D^{N - 1})] = + \infty \, . 
\end{align*} 
It follows from Proposition~\ref{capa infinie} that $\beta_N (C_\phi) = 1$.
\end{proof} 
\begin{proof} [Proof of Proposition~\ref{capa infinie}]
If $R \colon H^2 (\Omega) \to H^2 (\Omega)$ is a finite-rank operator, we set, for $t < 0$:
\begin{displaymath} 
\qquad \quad (R_t f) (w) = (R f) (\e^t w) \, , \qquad f \in H^2 (\Omega) \, .
\end{displaymath} 
Then the rank of the operator $R_t$ is less or equal to that of $R$.

Recall that if $\| \, . \, \|$ is the norm whose unit ball is $\Omega$, then the pluricomplex Green function of $\Omega$ is $g_\Omega (z) = \log \| z \|$, 
and hence the level set $S(r)$ is the sphere $S (0, \e^r) = \e^r \partial \Omega$ for this norm. Since:
\begin{displaymath} 
\int_{S (r)} | f [ \phi (\e^t w)] - (R f) (\e^t w)|^2 \, d \mu_r (w) = \int_{S (r + t)} |f [\phi (z)] - (R f) (z) |^2 \, d\mu_{r + t} (z) \, ,
\end{displaymath} 
we have, setting $\phi_t (w) = \phi (\e^t w)$:
\begin{displaymath} 
\| C_{\phi_t} (f) - R_t (f) \|_{H^2} \leq \| C_\phi (f) - R (f)\|_{H^2} \, .
\end{displaymath} 
It follows that $a_n (C_{\phi_t}) \leq a_n (C_\phi)$ for every $n \geq 1$. Therefore $\beta_N^- (C_{\phi_t}) \leq \beta_N^- (C_\phi)$. 

By Theorem~\ref{th mino}, we have:
\begin{displaymath} 
\exp \Bigg[ - 2 \pi \bigg( \frac{N!}{\capa [\phi_t ( \Omega) ]} \bigg) ^{1/N}\Bigg]  \leq \beta_N^- (C_{\phi_t}) \, .
\end{displaymath} 
Since $\phi_t (\Omega) = \phi (\e^t \Omega)$ increases to $\phi (\Omega)$ as $t \uparrow 0$, we have (see \cite[Corollary~4.7.11]{Klimek}):
\begin{displaymath} 
\capa [\phi (\Omega)] = \lim_{t \to 0} \capa [\phi_t (\Omega)] \,. 
\end{displaymath} 
As $\capa [\phi (\Omega)] = \infty$, we get:
\begin{displaymath} 
\beta_N^- (C_\phi) \geq \limsup_{t \to 0} \beta_N^- (C_{\phi_t}) = 1 \, . \qedhere
\end{displaymath} 
\end{proof}

\noindent{\bf Remark 1.} In \cite[Theorem~5.12]{LIQR}, we construct a non-degenerate analytic function $\phi \colon \D^2 \to \D^2$ such that 
$\overline{\phi (\D^2)} \cap \partial \D^2 \neq \emptyset$ and for which $\beta_2^+ (C_\phi) < 1$. We hence have $\capa [\phi (\D^2)] < \infty$. 
\smallskip

\noindent{\bf Remark 2.} The capacity cannot tend to infinity too fast when the compact set approaches the boundary of $\Omega$; in fact, we have the 
following result, that we state for the ball, but which holds more generally.
\begin{proposition}
For every compact set $K$ of $\B_N$, we have, for some constant $C_N$:
\begin{displaymath} 
\capa (K) \leq \frac{C_N}{[{\rm dist}\, (K, {\mathbb S}_N)]^N} \, \cdot
\end{displaymath} 
\end{proposition}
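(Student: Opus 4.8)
The plan is to reduce the estimate to the case of a ball centred at the origin, for which the relevant extremal function is explicit.

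Write $d = {\rm dist}\, (K, {\mathbb S}_N) \in (0, 1]$ (recall that $K$ is a compact subset of $\B_N$). If $d = 1$, then $K = \{0\}$ is pluripolar and $\capa (K) = 0$, so there is nothing to prove; assume $d < 1$. For every $z \in K$ we have $1 - \| z \|_2 = {\rm dist}\, (z, {\mathbb S}_N) \geq d$, hence $K$ is contained in the closed ball $\overline{B (0, 1 - d)}$ of center $0$ and radius $1 - d$. The Monge-Amp\`ere capacity is non-decreasing (this is immediate from its definition as a supremum), so:
\begin{displaymath}
\capa (K) \leq \capa \big( \overline{B (0, 1 - d)} \big) \, .
\end{displaymath}

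It therefore suffices to bound $\capa \big( \overline{B (0, \rho)} \big)$ for $0 < \rho < 1$. By Siciak's formula, already recalled, the $L$-extremal function of $\overline{B (0, \rho)}$ is $V_{\overline{B (0, \rho)}} (z) = \log^+ \big( \| z \|_2 / \rho \big)$, and I would check that on $\B_N$:
\begin{displaymath}
u_{\overline{B (0, \rho)}, \B_N} (z) = \max \Big( - 1 \, , \ \frac{\log \| z \|_2}{\log (1 / \rho)} \Big) = \frac{1}{\log (1 / \rho)}\, V_{\overline{B (0, \rho)}} (z) - 1 \, .
\end{displaymath}
Indeed, the right-hand side is {\it psh} on $\B_N$, is $\leq 0$ there and $\leq - 1$ on $\overline{B (0, \rho)}$, hence lies below $u_{\overline{B (0, \rho)}, \B_N}$, and the reverse inequality follows from the maximum principle together with \eqref{equiv u-V}; in particular this function is continuous, so $\overline{B (0, \rho)}$ is regular and equals its own upper regularization. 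By \eqref{more convenient} and the homogeneity of the complex Monge-Amp\`ere operator:
\begin{displaymath}
\capa \big( \overline{B (0, \rho)} \big) = \int_{\B_N} \big( dd^c u_{\overline{B (0, \rho)}, \B_N} \big)^N
= \frac{1}{\big( \log (1 / \rho) \big)^N} \int_{\B_N} \big( dd^c V_{\overline{B (0, \rho)}} \big)^N \, .
\end{displaymath}
The positive measure $\big( dd^c V_{\overline{B (0, \rho)}} \big)^N$ is carried by the sphere $\{ \| z \|_2 = \rho \} \subseteq \B_N$ (it vanishes where $V_{\overline{B (0, \rho)}} = 0$, and on $\{ \rho < \| z \|_2 < 1 \}$ it equals $( dd^c \log \| z \|_2 )^N = 0$), and its total mass is $(2 \pi)^N$ (by a linear change of variables it does not depend on $\rho$, and for $\rho = 1$ this is the classical value of the Monge-Amp\`ere mass of $\log^+ \| \, . \, \|_2$). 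Hence:
\begin{displaymath}
\capa \big( \overline{B (0, \rho)} \big) = \frac{(2 \pi)^N}{\big( \log (1 / \rho) \big)^N} \, .
\end{displaymath}

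Finally, taking $\rho = 1 - d$ and using $\log \frac{1}{1 - d} = - \log (1 - d) \geq d$ for $0 \leq d < 1$, I would conclude:
\begin{displaymath}
\capa (K) \leq \frac{(2 \pi)^N}{\big( \log \frac{1}{1 - d} \big)^N} \leq \frac{(2 \pi)^N}{d^N} = \frac{(2 \pi)^N}{\big[ {\rm dist}\, (K, {\mathbb S}_N) \big]^N} \, ,
\end{displaymath}
so the statement holds with $C_N = (2 \pi)^N$. The only point that is not entirely routine is the second step --- the identification of the relative extremal function of a centred ball and the value $(2\pi)^N$ of the corresponding Monge-Amp\`ere mass --- but both are classical. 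An alternative, which dispenses with any explicit constant, is to note that $(g_{\B_N})_r = |r|\, u_{\overline{B (0, \e^r)}, \B_N}$, whence $\capa \big( \overline{B (0, \e^r)} \big) = |r|^{-N}\, \| \mu_{g_{\B_N}, r} \|$, with $\mu_{g_{\B_N}, r}$ the Demailly--Monge--Amp\`ere measure of \eqref{mesures DMA}, and that the total masses $\| \mu_{g_{\B_N}, r} \|$ are bounded (by their limit $(2 \pi)^N$) because they increase with $r$.
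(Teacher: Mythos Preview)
Your proof is correct, but it follows a different route from the paper's. The paper does not reduce to a centred sub-ball; instead it works directly with $K$ and uses the Bedford--Taylor comparison theorem. Setting $\rho(z) = |z|^2 - 1$ and $a_K = \min_{z\in K}(1 - |z|^2)$, the function $v = \rho/a_K$ is {\it psh}, non-positive on $\B_N$, $\leq -1$ on $K$, and tends to $0$ at the boundary; the comparison theorem then gives
\[
\capa(K) = \int_{\B_N} (dd^c u_K^\ast)^N \leq \int_{\B_N} (dd^c v)^N = \frac{1}{a_K^N}\int_{\B_N} (dd^c \rho)^N = \frac{4^N N!\,\lambda_{2N}(\B_N)}{a_K^N}\,,
\]
and one concludes via $a_K \geq {\rm dist}(K,\mathbb{S}_N)$. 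Your argument instead exploits monotonicity of capacity and the explicit formula $\capa\big(\overline{B(0,\rho)}\big) = (2\pi)^N/\big(\log(1/\rho)\big)^N$, followed by the elementary inequality $-\log(1-d)\geq d$. Both methods are short; yours yields the sharper constant $C_N = (2\pi)^N$ (versus $(4\pi)^N$ in the paper), while the paper's comparison-theorem argument is more portable: it applies verbatim to any bounded hyperconvex domain admitting a smooth {\it psh} defining function $\rho$ with $(dd^c\rho)^N$ integrable, without needing to identify the extremal function of the sub-level sets. Your justification of the reverse inequality for $u_{\overline{B(0,\rho)},\B_N}$ via ``the maximum principle together with \eqref{equiv u-V}'' is a bit terse --- the clean argument is the domination principle on the annulus $\{\rho < \|z\|_2 < 1\}$, where $\log\|z\|_2/\log(1/\rho)$ is maximal --- but the formula itself is indeed classical, and your alternative via the Demailly--Monge--Amp\`ere measures $\mu_{g_{\B_N},r}$ (each of total mass exactly $(2\pi)^N$) is entirely satisfactory.
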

\begin{proof}
We know that:
\begin{displaymath} 
\capa (K) = \int_{\B_N} (dd^c u_K^\ast)^N \, .
\end{displaymath} 
Let $\rho (z) = |z|^2 - 1$ and $a_K := \min_{z \in K} [- \rho (z)] = - \max _{z \in K} \rho (z)$. Then $\rho$ is in ${\cal PSH}$ and is non-positive. 
Since $a_K > 0$, the function:
\begin{displaymath} 
v (z) = \frac{\rho (z)}{a_K} 
\end{displaymath} 
is in ${\cal PSH}$, non-positive on $\B_N$, and $v \leq - 1$ on $K$. Hence $v \leq u_K \leq u_K^\ast$. 

Since $v (w) = 0$ for all $w \in {\mathbb S}_N$ and (see \cite[Proposition~6.2~(iv)]{BT}, or \cite[Proposition~4.5.2]{Klimek}):
\begin{displaymath} 
\lim_{z \to w} u_K^\ast (z) = 0 \, ,
\end{displaymath} 
for all $w \in {\mathbb S}_N$, the comparison theorem of Bedford and Taylor (\cite[Theorem~4.1]{BT}; \cite[Theorem~3.7.1]{Klimek} gives, since 
$v \leq u_K^\ast$ and $v, u_K^\ast \in {\cal PSH}$:
\begin{displaymath} 
\int_{\B_N} (dd^c u_K^\ast)^N \leq \int_{\B_N} (dd^c v)^N = \frac{1}{a_K^N} \int_{\B_N} (dd^c \rho)^N \, .
\end{displaymath} 
As $(dd^c \rho)^N = 4^N N! \, d\lambda_{2N}$, we get, with $C_N := 4^N N! \, \lambda_{2N} (\B_N)$:
\begin{displaymath} 
\capa (K) \leq \frac{C_N}{a_K^N}\, \cdot
\end{displaymath} 
That ends the proof since:
\begin{displaymath} 
a_K = \min_{z \in K} (1 - |z|^2) \geq \min_{z \in K} (1 - |z|) = {\rm dist}\, (K, {\mathbb S}_N) \, \qedhere
\end{displaymath} 
\end{proof}

We have assumed that the symbol $\phi$ is non-degenerate. For a degenerate symbol $\phi$, we have:
\begin{proposition}
Let $\Omega$ be a bounded hyperconvex and good complete Reinhardt domain in $\C^N$, and let $\phi \colon \Omega \to \Omega$ be an  
analytic function such that $\overline{\phi (\Omega)} \subseteq \Omega$ is pluripolar. Then $\beta_N (C_\phi) = 0$.
\end{proposition}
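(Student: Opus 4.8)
The plan is to exploit the majorization machinery of Theorem~\ref{th majo} together with the characterization of pluripolarity in terms of the relative extremal function. Write $\omega = \phi(\Omega)$ and $K = \overline{\phi(\Omega)}$, a compact subset of $\Omega$ which is pluripolar by hypothesis. Since $\beta_N^+(C_\phi) \geq \beta_N^-(C_\phi) \geq 0$ always, it suffices to prove $\beta_N^+(C_\phi) = 0$, i.e.\ that $a_{n^N}(C_\phi)$ decays faster than any exponential $\e^{-cn}$.

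The main point is that the proof of Theorem~\ref{th majo} only uses the compact set $K$ through Zakharyuta's upper bound (Proposition~\ref{Zakha-upper}) for the Kolmogorov widths $d_n(A_K)$, and through the quantity $\Lambda_N = \limsup_n [d_n(A_K)]^{n^{-1/N}}$. When $K$ is pluripolar, $\capa^\ast(K) = 0$, hence $\tau_N(K) = 0$, and Proposition~\ref{Zakha-upper} degenerates: formally $\big(N!/\tau_N(K)\big)^{1/N} = +\infty$, so that $\limsup_n n^{-1/N}\log d_n(A_K) = -\infty$, i.e.\ $\Lambda_N = 0$. Thus I would first state and justify this degenerate case of Proposition~\ref{Zakha-upper}: for a pluripolar compact $K$ with non-empty interior\,---\,wait, a pluripolar set has empty interior, so one must be slightly careful. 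The correct route is: enclose $K$ in a regular compact set $\tilde K$ of arbitrarily small capacity. Concretely, since $K$ is pluripolar and compact in $\Omega$, the regularized relative extremal function $u_{K,\Omega}^\ast \equiv 0$; by \cite[Proposition~4.7.2]{Klimek} and monotone convergence, for every $\eta>0$ there is an open set $\omega_\eta \supseteq K$, relatively compact in $\Omega$, with $\capa(\omega_\eta) < \eta$. Pick a regular compact $\tilde K_\eta$ with $K \subseteq \mathrm{int}(\tilde K_\eta)$ and $\tilde K_\eta \subseteq \omega_\eta$ (e.g.\ a finite union of small closed balls covering $K$ inside $\omega_\eta$; balls are regular, and a finite union of regular compacts with the covering having non-empty interior has the required properties), so $\capa(\tilde K_\eta) \leq \capa(\omega_\eta) < \eta$ and $\tilde K_\eta$ has non-empty interior.

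Now I would run the argument of Theorem~\ref{th majo} verbatim with $K$ replaced by $\tilde K_\eta$: since $\overline{\phi(\Omega)} = K \subseteq \mathrm{int}(\tilde K_\eta) \subseteq \tilde K_\eta$, the hypothesis $\overline{\phi(\Omega)} \subseteq \tilde K_\eta$ of Theorem~\ref{th majo} holds, and the good-complete-Reinhardt hypothesis on $\Omega$ is unchanged. The conclusion is $\beta_N^+(C_\phi) \leq \Gamma_N(\tilde K_\eta) = \exp\big[-2\pi(N!/\capa(\tilde K_\eta))^{1/N}\big] \leq \exp\big[-2\pi(N!/\eta)^{1/N}\big]$. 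Letting $\eta \to 0$ gives $\beta_N^+(C_\phi) = 0$, hence $\beta_N^-(C_\phi) = \beta_N^+(C_\phi) = 0$ and $\beta_N(C_\phi) = 0$.

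The main obstacle is the passage from ``$K$ pluripolar'' to ``$K$ contained in a regular compact of small capacity with non-empty interior'': one must produce the enclosing set $\tilde K_\eta$ correctly and make sure it satisfies all hypotheses needed to invoke Theorem~\ref{th majo} (in particular that it is compact in $\Omega$ with non-void interior, which Theorem~\ref{th majo} requires of the majorizing compact $K$). Using finite unions of closed balls, whose $L$-extremal functions are explicit and which are automatically regular, handles this cleanly; the smallness of capacity comes from the outer-capacity characterization of pluripolarity, $\capa^\ast(K) = 0$, combined with \cite[Proposition~4.7.2]{Klimek}. Everything else is a direct reuse of the proof of Theorem~\ref{th majo}, with $\Lambda_N$ bounded by $\Gamma_N(\tilde K_\eta) \to 0$ in place of the fixed bound there.
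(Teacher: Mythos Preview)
Your proposal is correct and follows essentially the same approach as the paper: enclose the pluripolar compact $K=\overline{\phi(\Omega)}$ in a compact subset of $\Omega$ with non-empty interior and arbitrarily small capacity, apply Theorem~\ref{th majo}, and let the capacity tend to $0$. The paper simply takes the closed $\eps$-neighborhood $K_\eps=\{z\in\Omega:\ {\rm dist}(z,K)\le\eps\}$ and uses $\capa(K_\eps)\to\capa(K)=0$ via \cite[Proposition~4.7.1(iv)]{Klimek}; note that your insistence on regularity of $\tilde K_\eta$ is unnecessary, since neither Theorem~\ref{th majo} nor Proposition~\ref{Zakha-upper} requires it.
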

Recall that $\phi (\Omega)$ is pluripolar when $\phi$ is degenerate (see \cite[Proposition~2]{Nguyen-Plesniak}); its closure is also pluripolar if it satisfies  
the condition \eqref{eq ast}.
\begin{proof}
Let $K = \overline{\phi (\Omega)}$. By hypothesis, we have $\capa (K) = 0$. For every $\eps > 0$, let 
$K_\eps = \{ z \in \Omega \, ; \ {\rm dist}\, (z, K) \leq \eps\}$. By Theorem~\ref{th majo}, we have $\beta_N^+ (C_\phi) \leq \Gamma_N (K_\eps)$. 
As $\lim_{\eps \to 0} \capa (K_\eps) = \capa (K) = 0$ (\cite[Proposition~4.7.1$(iv)$]{Klimek}), we get $\beta_N (C_\phi) = 0$. 
\end{proof}

\noindent{\bf Remark~1.} In \cite[Section~4]{LIQR}, we construct a degenerate symbol $\phi$ on the bi-disk $\D^2$, defined by 
$\phi (z_1, z_2) = \big( \lambda_\theta (z_1), \lambda_\theta (z_1) \big)$, where $\lambda_\theta$ is a lens map, for which $\beta^- (C_\phi) > 0$. For 
this function $\overline{\phi (\D^2)} \cap \partial \D^2 \neq \emptyset$ and hence $\overline{\phi (\D^2)}$ is not a compact subset of $\D^2$. 
\smallskip

\noindent {\bf Remark~2.} In the one dimensional case, for any (non constant) analytic map $\phi \colon \D \to \D$, the parameter 
$\beta (C_\phi) = \beta_1 (C_\phi)$ is determined by its range $\phi (\D)$, as shown by the formula:
\begin{displaymath}
\beta (C_\phi) = \e^{- 1/ \capa [\phi (\D)]}
\end{displaymath}
proved in \cite{LQR}. This is no longer true in dimension $N \geq 2$. In \cite{LIQR-surjective}, we construct pairs of (degenerate) symbols 
$\phi_1, \phi_2 \colon \D^2 \to \D^2$, such that $\phi_1 (\D^2)= \phi_2 (\D^2)$ and:
\smallskip

1) $C_{\phi_1}$ is not bounded, but $C_{\phi_2}$ is compact, and even $\beta_2 (C_{\phi_2}) = 0$; 
\smallskip

2) $C_{\phi_1}$ is bounded but not compact, so $\beta_2 (C_{\phi_1}) = 1$, and $C_{\phi_2}$ is compact, with $\beta_2 (C_{\phi_2}) = 0$; 
\smallskip

3) $C_{\phi_1}$ is compact, with $0 < \beta_2 (C_{\phi_1}) < 1$, and $C_{\phi_2}$ is compact, with $\beta_2 (C_{\phi_2}) = 0$.

\bigskip

\noindent{\bf Acknowledgements.} We thank S.~Nivoche and A.~Zeriahi for useful discussions and informations, and Y.~Tiba, who send 
us his paper \cite{Tiba}. We than specially S.~Nivoche, who carefully read a preliminary version of this paper. \par
\smallskip

The third-named author is partially supported by the project MTM2015-63699-P (Spanish MINECO and FEDER funds).

\bigskip


\smallskip

{\footnotesize
Daniel Li \\ 
Univ. Artois, Laboratoire de Math\'ematiques de Lens (LML) EA~2462, \& F\'ed\'eration CNRS Nord-Pas-de-Calais FR~2956, 
Facult\'e Jean Perrin, Rue Jean Souvraz, S.P.\kern 1mm 18 
F-62\kern 1mm 300 LENS, FRANCE \\
daniel.li@euler.univ-artois.fr
\smallskip

Herv\'e Queff\'elec \\
Univ. Lille Nord de France, USTL,  
Laboratoire Paul Painlev\'e U.M.R. CNRS 8524 \& F\'ed\'eration CNRS Nord-Pas-de-Calais FR~2956 
F-59\kern 1mm 655 VILLENEUVE D'ASCQ Cedex, FRANCE \\
Herve.Queffelec@univ-lille1.fr
\smallskip
 
Luis Rodr{\'\i}guez-Piazza \\
Universidad de Sevilla, Facultad de Matem\'aticas, Departamento de An\'alisis Matem\'atico \& IMUS,  
Calle Tarfia s/n \\ 
41\kern 1mm 012 SEVILLA, SPAIN \\
piazza@us.es
}

\end{document}